\def\D{{\mathcal{D}}}
\def\S{{\mathcal{S}}}
\def\T{\mathcal{T}}
\def\CF{{\widehat{\mathscr{P}}}}
\def\r{\textbf{r}}
\def\a{\alpha}
\def\n{\textbf{n}}
\def\e{\textbf{e}}
\def\u{\textbf{u}}
\def\v{\textbf{v}}
\def\eu{{\rm e}}
\def\ju{{\rm j}}
\def\du{{\rm d}}
\def\1{{\mathds{1}}}
\def\Der{\mathrm{D}}
\def\FL{(-\Delta)^{\gamma/2} }
\def\P{\mathscr{P}}
\newcommand{\C}{ \mathbb{C}}
\newcommand{\R}{ \mathbb{R}}
\newcommand{\N}{ \mathbb{N}}
\newtheorem{lemma}{\textbf{Lemma}}
\newtheorem{cor}{\textbf{Corollary}}
\newtheorem{prop}{\textbf{Proposition}}
\newtheorem{theo}{\textbf{Theorem}}
\newtheorem{defn}{\textbf{Definition}}
\title{On the Continuity of Characteristic Functionals \\ and Sparse Stochastic Modeling}
\author{Julien Fageot, Arash Amini, and Michael Unser}
\begin{document}
\maketitle

\paragraph{Abstract.} 

The  characteristic functional is the infinite-dimensional  generalization of {the} Fourier transform for measures on function spaces.
 It characterize{s} the statistical law of {the associated} stochastic process in the same way {as a} characteristic function specif{ies} the probability distribution of {its corresponding} random variable. 
 Our  goal in this work is to lay the foundations of the \emph{innovation model}, a (possibly) non-Gaussian probabilistic model for sparse signals.    
This is achieved by using the characteristic functional to specify sparse stochastic processes {that are} defined as linear transformation{s} of general continuous-domain white L\'evy noises (also called \emph{innovation processes}). 
We prove the existence of a broad class of sparse processes by using the Minlos-Bochner theorem. This requires a careful study of the regularity properties, especially the $L^p$-boundedness,  of the characteristic functional of the innovations.
We are especially interested in the functionals that are only defined for  $p<1$ since they appear to be associated with the sparser kind of processes.
Finally, we apply our main theorem of existence to two specific subclasses of processes with specific invariance properties. 

\paragraph{Keywords.} Characteristic functional - Generalized stochastic process  -  Innovation model - White L\'evy noise.

\paragraph{Mathematics Subject Classification.} 60G20 - 60H40 - 60G18.

\section{Introduction}

\subsection{Presentation of the Innovation Model}

Sparsity plays a significant role in the mathematical modeling of {real-world} signals. 
A signal is said to be \emph{sparse} if its energy tends to be concentrated in few coefficients in some transform-domain.
Natural images are known to have such a sparse representation.
Moreover, numerous statistical studies have shown that typical biomedical and natural images are non-Gaussian~\cite{Simoncelli2003}. 
These empirical facts highlight the fundamental limits of probabilistic models based on Gaussian priors \cite{Mumford2000}.  
{The sparsity-based theories developed for overcoming these limitations include}
wavelets \cite{Mallat1999} (with {powerful} applications in image coding and processing) and, more recently,  compressed sensing \cite{Donoho2004, Candes2006sparse}. They are inherently deterministic. \\

A new general model has been recently developed in order to reconcile the sparsity paradigm of signal processing with a probabilistic formulation. Its general foundations and motivations   were discussed in \cite{Unser_etal2011, Unser_etal2011bis}. The main hypotheses are as follows.
\begin{itemize}
    \item A signal is modeled as a random continuous-domain function $s$ defined on $\R^d$. Hence, $s$ is the stochastic process that captures the statistical properties of the signal.
    
    \item The process $s$ can be linearly decoupled, which implies the existence of  a linear whitening operator $\mathrm{L}$ such that
        \begin{equation}\label{eq:innovation-model}
            \mathrm{L}s=w,
        \end{equation}
        where $w$ is a continuous-domain innovation process, also called white noise, which is \emph{not necessarily Gaussian}. The term ``innovation'' reflects the property that $w$ is the unpredictable component of the process. 
         \end{itemize}
Following the terminology of \cite{Unser_etal2011}, these two hypotheses define the \emph{innovation model} (see Figure 1).

\begin{figure}[h!] \label{IM}
\centering
  \includegraphics[width=0.38 \textwidth]{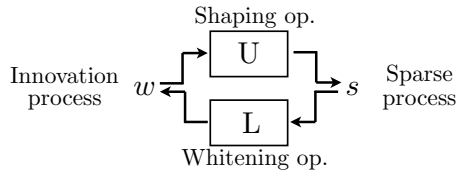} 
\caption{Innovation model}
\end{figure}

The innovation model provides a  mathematical framework that complies with the sparse behavior of real-world signals for at least two theoretical reasons. First, a process $s$ following \eqref{eq:innovation-model}, given a non-Gaussian innovation process $w$,  has been shown to be sparser (\emph{i.e.}, more compressible) than any Gaussian one \cite{Unser_etal2011}. We can therefore refer to these processes as  \emph{sparse processes}. Second, it has been demonstrated for the  case of symmetric $\alpha$-stable (S$\alpha$S) AR(1)  processes that better decoupling is achieved in a wavelet-like representation than with the traditional sine basis or the KLT  \cite{Pad2013wavelets}.  \\
The innovation model has already been applied to various fields of image processing such as  
Bayesian estimation from noisy samples of sparse processes~\cite{Amini2013Bayesian}, 
algorithms for the optimal quadratic estimation of sparse processes~\cite{KamilovMMSE}, 
and reconstruction techniques based on sparse and self-similar processes \cite{Bostan2013SampTA}. It was also found to be useful  in inverse problems, involving MRI, deconvolution, and  X-ray tomography reconstruction problems  \cite{BosKamNil, BostanIsbi2012}. \\
While these examples show that sparse processes are highly relevant for practical applications, the theory currently available   is based on too-constraining assumptions. In particular, it excludes some of the sparsest processes such as S$\alpha$S   with $\alpha <1$, for which wavelets have been found empirically to be optimal \cite{Pad2013wavelets}. More generally,  the compatibility between a linear operator $\mathrm{L}$ and an innovation process $w$, defined as the existence of a process $s$ such that $\mathrm{L}s =w$, is a crucial question that needs to be addressed. \\

The innovation model is formulated within the extended framework of Generalized Stochastic Processes (GSP), the stochastic counterpart of Schwartz theory of generalized functions. 
This probabilistic theory was historically  introduced in the 50's by   Gelfand \cite{Gelfand1955generalized}.
The larger part of the litterature on GSP is concerned with second-order processes with a special emphasis on the Gaussian case. Recent examples include results on  existence and regularity of Gaussian GSP \cite{bel1998linear}, construction of Gaussian and Poisson isotropic and self-similar GSP \cite{Bierme2010}, and classification of Gaussian stationary mean-square increments GSP \cite{Jorgensen2011}. 
Gelfand's formulation extends beyond the second-order family. For instance, the processes with unbounded variance take a particular relevance in the context of sparsity \cite{Aminicompressibility}, \cite{Gribonval2012compressibility}. 
We consequently consider \cite{GelVil4} as the starting point of our own developments.\\
This  framework of generalized stochastic processes enables the definition of the innovation processes, which cannot be defined as classical stochastic processes. Gelfand   defines innovation processes as random generalized functions. As a consequence, the GSP  are not observed pointwise but by forming duality products with test functions $\varphi \in \D$, where $\D$ is the space of smooth and compactly supported functions.
For a fixed $\varphi$, the observation $\langle w,\varphi\rangle$ is then a conventional random variable. 
In addition to that, Gelfand's framework appears to be particularly adapted for the development of the theory related to the innovation model and for its applicability in signal processing. \\

\subsection{Contributions}
Our goal is to define the broadest   framework that guarantees the existence of sparse processes. In that sense, our work can be seen as an extension of the existence results of \cite{Unser_etal2011}.  
We now give  the three main results of this paper. \\

\textbf{a) Proper definition of innovation processes over $\S'$} \\
While the usual definition of an innovation process (\emph{i.e.}, a continuous-domain white noise) is over the space $\D'$, the topological dual of $\D$ \cite{GelVil4}, we therein define innovation process{es}  over the space $\S'$ of tempered generalized functions.  This extension requires to identify a sufficient condition for an innovation process to be tempered, which is the focus of Theorem \ref{theoS}. 
       We show in addition that most of the innovation processes in $\D'$ are  supported on $\S'$ (Theorem \ref{theoDS}). This ensures the compatibility between the two constructions over $\D'$ and $\S'$ when they occur. \\ 
       	The choice of $\S'$ is driven by the desire to make the innovation model applicable to signal processing. Adopting $\S'$ as the definition space   allows us to extend $\langle w ,\varphi\rangle$ to the case of non-compactly supported functions, which are crucial in signal-processing applications.  \\
	     
\textbf{b) Existence of the broader family of sparse processes} \\
 We   investigate the compatibility of pairs $(w,\mathrm{L})$ of innovation processes and linear operators and introduce a large class of valid combinations. 
    Before describing our contributions, we briefly summarize the knowm results on the existence of sparse processes of the form $s=\mathrm{L}^{-1}w$.\\
    Gelfand formulates the general definition of innovation processes on $\D'$. Hence, $(w,\mathrm{Id})$ is a compatible pair for all $w$ defined by \cite{GelVil4}.
    An immediate extension of this result is that $(w,\mathrm{L})$ is also valid if the adjoint operator $\mathrm{L}^{-1}$ has a $\D$-stable inverse operator. In that case, one can directly define $s$ according to
    	\begin{equation} \label{s_def}
    	\langle s,\varphi \rangle = \langle w ,\mathrm{L}^{*-1} \varphi \rangle.
	\end{equation}
	Indeed,  the $\D$-stability of $\mathrm{L}^{*-1}$ ensures that $\langle w,\mathrm{L}^{*-1} \varphi \rangle$ is always well-defined. We  then have $\langle \mathrm{L}s,\varphi \rangle = \langle s, \mathrm{L}^*\varphi \rangle = \langle w , \mathrm{L}^{*-1} \mathrm{L}^{*} \varphi \rangle = \langle w ,\varphi \rangle$ or, equivalently, $\mathrm{L}s=w$.  
	Unfortunately, interesting whitening operators for signal processing do not fullfil this stability condition.
	For instance, in the one-dimension case, the common differential operator $\mathrm{L} =\alpha \mathrm{Id} - \mathrm{D}$ with $\alpha>0$  associated with AR(1) processes is already problematic. Indeed, $\rho_\alpha(t) =  u(t) \mathrm{e}^{-\alpha t}$, with $u(t)$ the Heaviside step function, is the causal Green function of $\mathrm{L}^* = \alpha \mathrm{Id} + \mathrm{D}$. Its inverse $\mathrm{L}^{*-1} \varphi = (\rho*\varphi ) (t)$ is therefore not $\D$-stable. Note  that $\mathrm{L}^{*-1}$ is however  $\S$-stable. This has also encourage us to develop the theory of innovation model over $\S'$  instead of $\D'$.   \\
	As a first step, Unser et al. have expressed the comparability condition over $\S'$. The pair $(w,\mathrm{L})$ is shown to be compatible if there exists $p\geq 1$ such that (i) $\mathrm{L}^{*}$ admits a left-inverse operator $\mathrm{L}^{*-1}$ from $\S$ to $L^p$ and (ii) $w$ is $p$-admissible, a condition that quantifies the level of sparsity of $w$ (see Theorem 3 in \cite{Unser_etal2011}). This theory enables sparse processes to be defined not only for classical differential operators that are typically $\S$-stable, but also for fractional differential operators which require the $L^p$ extension. This generalization is sufficient for most of the cases of practical interest, but has two restrictions. First, it does not encompass the case of high sparsity as it is restricted to $p\geq 1$. Second, the $p$-admissibility condition limits the generality of the results.  \\
	 In this paper,  we formulate a new criterion of compatibility that avoid these restrictions. We show that the generalized stochastic process $s$  over $\S'$ with $\mathrm{L} s = w$ exists if  one can link  the existence of moments for $w$ to the existence of a stable left-inverse for the operator $\mathrm{L}^*$ (Theorem \ref{maintheo}). 
   We present our proofs of sufficiency in the most general setting, which requires us  to  extend  the continuity of the characteristic functional of the innovation processes from $\S$ to $L^p$ spaces (Proposition \ref{mainprop}). \\
         
\textbf{c) Construction of specific subclasses of processes}\\
     We apply our compatibility condition{s} to {two} specific {families} of operators. A  class of self-similar processes is defined   (Proposition \ref{GammaV}) by extending a previous study of the fractional Laplacian operators \cite{Sun-frac}. A class of directional L\'evy processes in dimension $d$ is introduced by the use of directional differential operators (Proposition \ref{existsMondrian}). The latter extends the work done in \cite{Unser_etal2011} for $d=1$.

\subsection{Outline}

This paper is organized as follows. In Section \ref{sec:GSP}, we recall some concepts and results on generalized stochastic processes. In Section \ref{sec:Criterion}, we first present    the general construction of white noises (innovation processes) developed by Gelfand \cite{GelVil4} and adapt it to the space of tempered generalized functions (Section \ref{subsec:LevyNoise}). {Next}, we present and prove  {a criterion for the compatibility of the innovation process $w$ and the whitening operator $\mathrm{L}$ to form a sparse  process (Section \ref{subsec:SparseProcess}).}
The proof {relies on }
continuity bounds for the characteristic functional of innovation processes (Section \ref{subsec:Continuity}). Finally, we apply this criterion in Section \ref{sec:Applications} to two specific classes of operators {and identify }
classes of generalized stochastic processes: {self-similar sparse processes through fractional Laplacian operators (Section \ref{subsec:SelfSimilar}) and directional sparse processes through directional-derivative operators (Section \ref{subsec:Directional}).}

\section{Generalized Stochastic Processes}\label{sec:GSP}

Our main concern  is to define stochastic processes that satisfy the innovation model~\eqref{eq:innovation-model}.  The theory of generalized stochastic processes  is based on functional analysis. {In Table \ref{table:notations}, we provide the definition of function spaces linked to our work.} 
{They include} subspaces of ordinary functions from $\R^d$ to $\R$ (classical functions) {as well as} subspaces of the space $\D'$ of distributions {(also called generalized functions)} \cite{SchDistri}.  

\subsection{Definition of Generalized Stochastic Processes}

We {deviate} from the traditional {time-series approach to} stochastic processes by presenting them as probability measures on a function space $\mathcal{X} $ {of} functions from $E$ to $\R$. Let $\mathcal{X}$ be a topological vector space of real-valued functions. We denote by $\mathcal{A}$ the   $\sigma$-field generated by the cylindric sets. There are the subsets of $E$ defined by $A_{\boldsymbol{x},B}=\{h\in \mathcal{X}, (h(x_1),...,h(x_N)) \in B\}$ for fixed $N\in \mathbb{N}$, {where} $\boldsymbol{x}=(x_1,...,x_N) \in E^N$ and $B$ {is} a Borelian  set in $\R^N$. {For a given probability measure $\mathscr{P}$ on $\mathcal{A}$, }
the \emph{canonical stochastic process} $s$ on $(\mathcal{X},\mathcal{A},\mathscr{P})$ is defined by
\begin{eqnarray*}
 s: & (\mathcal{X},E) & \rightarrow  \mathbb{R} \\
 & (h,x) & \mapsto  h(x) .
\end{eqnarray*}

There are two ways to consider $s$.
\begin{itemize}
\item If $\boldsymbol{x}=(x_1,...,x_N) \in E^N$ is fixed, $h\mapsto (h(x_1),...,h(x_N))$ is a random variable in $\R^N$ with {probability} law $\mathscr{P}_{\boldsymbol{x}} (B) = \mathscr{P} \left( h\in \mathcal{X}, (h(x_1),...,h(x_N)) \in B\right)$ for any  Borelian set {$B$} of $\R^N$. These laws are the \emph{finite-dimensional marginals} of $s$. 

\item For $h{\in \mathcal{X}}$ {following} the probability measure $\mathscr{P}$, {the mapping} $x\mapsto h(x)$ is a sample function of the stochastic process ({\emph{i.e.,}} a random element of $\mathcal{X}$). 
\end{itemize}
\bigskip
 
If $E=\R^d$, we get {back to} the theory of classical (non-generalized) stochastic processes. {The generalized theory of stochastic processes is obtained when $E$ is formed by a set of test functions of $\R^d$ to $\R$. }
Let  $E=\mathcal{T}$ be a locally convex topological vector space (l.c.t.v.s.)---the minimal structure required in functional analysis  \cite{RudinFA}---and {let} $\mathcal{X}=\mathcal{T}'$ {be} the topological dual of $\mathcal{T}${. W}e define the stochastic process $s$ {by} 
\begin{eqnarray}
	 s: & (\mathcal{T}',\mathcal{T}) & \rightarrow  \mathbb{R} \\
 	& (u,\varphi) & \mapsto \langle u,\varphi \rangle . \nonumber
\end{eqnarray}

The random variable $s({\cdot},\varphi)$ is {de}noted {by} $\langle s,\varphi \rangle$. The realization $s(u,.)$,  {which follows} $\mathscr{P}$ {for $u\in \mathcal{T}'$}, is by definition a linear and continuous functional on $\mathcal{T}$. We call such $s$ a \emph{generalized stochastic process} if $\D\subset \mathcal{T} \subset L^2 \subset \mathcal{T}' \subset \D'$, meaning that $s$ {is} a random generalized function. {In \cite{GelVil4}, }Gelfand and Vilenkin {develop} the essential results for $\mathcal{T} = \mathcal{D}$. In this paper, we especially focus on {its} extensions {to} $\mathcal{T} = \S$.

\begin{table}[tbh]
\centering
\caption{Definition of function spaces used in the paper}\label{table:notations}
\bigskip
\begin{tabular}{l | l l l}
\hline
\hline
  Space & Parameter & Definition & Structure\\
  \hline
 $L^p$  & $1\leq p < + \infty$ &  $\lVert f\rVert_p = \left( \int_{\R^d} |f(\textbf{r}) |^p\du\textbf{r} \right)^{1/p} < + \infty $ & Complete normed  \\
 $L^\infty$  &   &  $\lVert f\rVert_\infty = \mathrm{ess} \sup_{\textbf{r} \in \R^d} |f(\r)|  < +\infty$ & Complete normed  \\
 $L^p $  & $0< p < 1$ &  $d_p (f,g) = \int_{\R^d} |(f-g)(\textbf{r}) |^p \du \textbf{r}  < + \infty $ & Complete metric  \\
 $L^{\infty,\a} $  & $0<\alpha<+\infty$ &  $\lVert f\rVert_{\infty,\a} = \lVert  (1+\|\r\|_2^{\a} ) f(\r) \rVert_\infty < + \infty $ & Complete normed  \\
 $\mathcal{R}$ &   & $\bigcap_{\a>0} L^{\infty,\a}$ & Complete metric  \\ 
  $C^\infty$ &   & Infinitely differentiable functions & Vectorial \\ 
 $\D$ &   & $f\in C^\infty$ with compact support & Nuclear  \\
 $\S$ &   & $f \in C^\infty$ with $\partial^{\n} f \in \mathcal{R}$ for all $\n \in \N^d$ & Nuclear  \\
 $\mathcal{O}_M$ &   & Space of slowly increasing functions & Vectorial  \\
 & & s.t. $f \in C^\infty$  with $|\partial^{\n} f (\r)| \leq |P_{\n}(\r)| $ & \\
 & &  for some polynomial $P_{\n}$ and all $\n$  & \\
  \hline
 $\D'$ &   & $u$ linear and continuous functional on $\D$   & Nuclear  \\
 $\S' $&& $u$ linear and continuous functional on $\S$ & Nuclear  \\
 $\mathcal{O}'_C$ && Space of rapidly decreasing generalized functions & Vectorial  \\
 & &   or equivalently   $u \in \S'$ such that $\mathcal{F}u \in \mathcal{O}_M$  & \\
\hline
\hline
\end{tabular}
\end{table}

In Table 1, we list the type of structures that are useful in this paper. All  considered function spaces are vectorial and most of them {are locally convex, as they} have a topological structure inherited from a distance, a norm, or a family of semi-norms \cite{RudinFA}. 
There are essentially two main structures for function spaces: Banach structure for complete normed spaces and nuclear structure for spaces defined by a family of semi-norms {that imply suitable decreasing properties on unit balls \cite{Treves1967}. }
Note that the Banach and nuclear structures are mutually exclusive in infinite dimension. As we shall see, the nuclear structure is  {central} to {the definition of} a continuous-domain  innovation process.

\subsection{The Characteristic Functional}

By analogy with the finite-dimensional case ($\mathcal{T} = \R^N$), where the characteristic function characterizes a probability measure (L\'evy's theorem), we use a Fourier-domain representation to describe the measures on $\mathcal{T}'$.

\begin{defn}
Let $\mathcal{T}$ be a l.c.t.v.s. and let $\mathcal{T}'$ be its topological dual. {The \emph{characteristic functional} of a generalized stochastic process $s$ on $\mathcal{T}'$ associated with the probability measure $\P_s$ is defined as}
\begin{equation}
\widehat{\mathscr{P}}_s(\varphi)=\mathbb{E}\left[\eu^{\ju\langle s,\varphi\rangle}\right]=\int_{\mathcal{T}'}\eu^{\ju\langle u,\varphi\rangle}\du\mathscr{P}_s(u),
\end{equation}
{where $\varphi \in \mathcal{T}$.}
\end{defn}

The characteristic functional contains the definition of all finite-dimensional laws of the process, in particular {the distribution of all} random vectors $X= \left( \langle s,\varphi_1\rangle, \cdots, \langle s,\varphi_N\rangle \right)$. Indeed, the characteristic function of $X$ is {given by}
\begin{equation}
\widehat{p}_X(\boldsymbol{\omega}) = \mathbb{E}[\eu^{\ju\langle \boldsymbol{\omega},X\rangle}] = \widehat{\mathscr{P}}_s(\omega_1\varphi_1+\cdots+\omega_N \varphi_N ) .
\end{equation}
{In Proposition \ref{CFprop}, w}e summarize the main properties of $\widehat{\mathscr{P}}_s$.

\begin{prop} \label{CFprop}
A characteristic functional is {normalized ($\widehat{\P}_s(0)=1$) and is continuous and positive-definite on $\mathcal{T}$. The latter means} that for all $N\in \mathbb{N}$, $a_1,\cdots,a_N \in \C$, and $\varphi_1,..., \varphi_N \in \T$, we have {that}
\begin{equation}
\sum_{i,j} a_i  \overline{a}_j \CF_s(\varphi_i - \varphi_j) \geq 0. 
\end{equation}
\end{prop}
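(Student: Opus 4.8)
The plan is to verify the three asserted properties separately, leaving continuity for last as it is the only one requiring a genuine limiting argument. Normalization is immediate: setting $\varphi=0$ gives $\langle u,0\rangle=0$ for every $u\in\mathcal{T}'$, so the integrand $\eu^{\ju\langle u,0\rangle}$ is identically $1$ and $\CF_s(0)=\int_{\mathcal{T}'}\du\mathscr{P}_s(u)=1$ since $\mathscr{P}_s$ is a probability measure.

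For positive-definiteness I would fix $N$, coefficients $a_1,\dots,a_N\in\C$ and test functions $\varphi_1,\dots,\varphi_N\in\mathcal{T}$, and move the finite sum inside the integral (legitimate by linearity of the integral, the sum being finite). The linearity of each $u$ gives $\langle u,\varphi_i-\varphi_j\rangle=\langle u,\varphi_i\rangle-\langle u,\varphi_j\rangle$, hence $\eu^{\ju\langle u,\varphi_i-\varphi_j\rangle}=\eu^{\ju\langle u,\varphi_i\rangle}\,\overline{\eu^{\ju\langle u,\varphi_j\rangle}}$, and the quadratic form collapses to a modulus:
\begin{equation*}
\sum_{i,j}a_i\overline{a}_j\,\CF_s(\varphi_i-\varphi_j)=\int_{\mathcal{T}'}\Big|\sum_{i=1}^N a_i\,\eu^{\ju\langle u,\varphi_i\rangle}\Big|^2\du\mathscr{P}_s(u)\geq 0,
\end{equation*}
the integrand being nonnegative and $\mathscr{P}_s$ positive.

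The substantive point is continuity on $\mathcal{T}$. The key observation is that, for each fixed $u\in\mathcal{T}'$, the map $\varphi\mapsto\langle u,\varphi\rangle$ is continuous by the very definition of the dual $\mathcal{T}'$, so $\varphi\mapsto\eu^{\ju\langle u,\varphi\rangle}$ is continuous and bounded by $1$ uniformly in $u$. Given $\varphi_n\to\varphi$ in $\mathcal{T}$, the integrand converges pointwise in $u$ to $\eu^{\ju\langle u,\varphi\rangle}$ and is dominated by the constant $1\in L^1(\mathscr{P}_s)$, so dominated convergence yields $\CF_s(\varphi_n)\to\CF_s(\varphi)$. If one prefers, the positive-definiteness just proved gives, by Cauchy--Schwarz in the integral representation, the standard bound $|\CF_s(\varphi)-\CF_s(\psi)|^2\leq 2\,\mathrm{Re}\big(1-\CF_s(\varphi-\psi)\big)$, which reduces the whole statement to continuity at the origin, where $1-\mathrm{Re}\,\CF_s(\varphi)=\int_{\mathcal{T}'}(1-\cos\langle u,\varphi\rangle)\,\du\mathscr{P}_s(u)\to 0$ again by dominated convergence.

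The main obstacle is a topological subtlety hidden in this last step: dominated convergence is a statement about sequences, whereas continuity on a general l.c.t.v.s.\ is phrased in terms of nets. This causes no trouble for the Fr\'echet space $\S$, and more generally for any metrizable $\mathcal{T}$, where sequential continuity coincides with continuity; for the non-metrizable space $\D$ one relies on its strict inductive-limit structure, along which the pertinent convergence is again sequential. I expect this to be the only delicate point, normalization and positive-definiteness being purely formal.
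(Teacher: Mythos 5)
Your proof is correct and fills in exactly what the paper leaves implicit: Proposition \ref{CFprop} is stated there without proof, accompanied only by the remark that normalization reflects $\mathscr{P}_s(\mathcal{T}')=1$ and that positive-definiteness is linked to the non-negativity of the measure, and your normalization and modulus-squared computations are precisely the standard arguments behind that remark. Your dominated-convergence treatment of continuity, together with the honest caveat that it yields sequential continuity (sufficient for the metrizable space $\S$, and for $\D$ via its inductive-limit steps, the only spaces the paper actually uses), is likewise the standard route and is, if anything, more careful than the paper, which does not address continuity at all.
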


The normalization {property reflects the fact} that $\P_s (\mathcal{T}) = 1$, whereas the positive-definiteness is linked with the {non-negativity} of the measure $\P_s$. Our focus here is on probability measures on the dual space $\mathcal{N}'$ of a nuclear space $\mathcal{N} \subset L^2 \subset \mathcal{N}'$. {The reason is that the converse of Proposition \ref{CFprop} also holds if $\mathcal{T}$ is nuclear (Theorem \ref{theoMB}). }
Notorious examples of nuclear spaces are $\mathcal{D}$, $\mathcal{S}$,  and their duals $\D'$ {(the space of distributions)} and $\mathcal{S}'$ {(the space of tempered distributions), as seen in Table \ref{table:notations}. This} highlights the deep link between nuclear structures and the theory of generalized {processes}. 

\begin{theo}  [Minlos-Bochner] \label{theoMB}
Let $\mathcal{N}$ be a nuclear space and $\CF$ be a continuous, positive-definite functional from $\mathcal{N}$ to $\C$ with $\CF(0)=1$. Then, there exists a unique measure $\mathscr{P}_s$ on $\mathcal{N}'$ such that
$$\widehat{\mathscr{P}}=\CF_s.$$
\end{theo}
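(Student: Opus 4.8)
The plan is to reduce this infinite-dimensional statement to the classical finite-dimensional Bochner theorem on cylinder sets, and then to promote the resulting finitely-additive cylinder set function to a genuine countably-additive measure on $\mathcal{N}'$ by exploiting nuclearity. First I would construct the finite-dimensional marginals. For any finite family $\varphi_1,\dots,\varphi_N \in \mathcal{N}$, the restriction of $\CF$ to $\mathrm{span}(\varphi_1,\dots,\varphi_N)$ is a continuous, positive-definite function on a finite-dimensional space taking the value $1$ at the origin. By the classical Bochner theorem there is a unique probability measure $\mu_{\varphi_1,\dots,\varphi_N}$ on $\R^N$ whose Fourier transform is $\boldsymbol{\omega} \mapsto \CF(\omega_1\varphi_1 + \cdots + \omega_N\varphi_N)$. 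Since all these marginals stem from the single functional $\CF$, they automatically satisfy the Kolmogorov consistency conditions under coordinate projections and linear relabelings, and hence define a finitely-additive set function $\mathscr{P}_0$ on the algebra of cylinder sets of $\mathcal{N}'$ with the prescribed characteristic functional.

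The crux is then to show that $\mathscr{P}_0$ is in fact countably additive on the $\sigma$-field generated by the cylinder sets, and this is exactly where the hypothesis that $\mathcal{N}$ is nuclear becomes indispensable. The topology of $\mathcal{N}$ can be generated by an increasing family of Hilbertian seminorms $\lVert \cdot \rVert_n$, and nuclearity guarantees that for every $n$ there exists $m>n$ such that the canonical embedding of the completion $H_m$ into $H_n$ is Hilbert-Schmidt. I would combine this with the continuity of $\CF$ at the origin: given $\varepsilon>0$, continuity furnishes an index $n$ and a radius $\delta>0$ with $|1-\CF(\varphi)| \leq \varepsilon$ whenever $\lVert \varphi \rVert_n \leq \delta$. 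A standard estimate (Minlos' lemma) then bounds the $\mathscr{P}_0$-measure of the exterior of balls in the dual Hilbert space $H_{-m}$ in terms of $\varepsilon$ and the Hilbert-Schmidt norm of the embedding $H_m \hookrightarrow H_n$. This yields the tightness needed to conclude, via a Prokhorov-type extension argument, that $\mathscr{P}_0$ extends to a countably-additive probability measure $\mathscr{P}_s$ concentrated on $H_{-m} \subset \mathcal{N}'$.

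Uniqueness is comparatively routine: any two measures on $\mathcal{N}'$ sharing the characteristic functional $\CF$ share all their finite-dimensional marginals, and since the cylinder sets form a $\pi$-system generating the $\sigma$-field $\mathcal{A}$, a monotone-class argument forces the two measures to coincide. The main obstacle is unquestionably the countable-additivity step: finite additivity and consistency alone never suffice in infinite dimensions (Bochner's theorem fails already for $\mathcal{N}=L^2$), and it is precisely the Hilbert-Schmidt embeddings afforded by nuclearity that rescue tightness and make the extension possible.
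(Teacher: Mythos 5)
The paper never proves this statement: Theorem \ref{theoMB} is quoted as a classical result (Minlos' theorem, in the tradition of Gelfand--Vilenkin) and is used as a black box, so there is no internal proof to compare yours against. Judged on its own merits, your sketch is the standard and correct argument: finite-dimensional Bochner on each $\mathrm{span}(\varphi_1,\dots,\varphi_N)$, Kolmogorov consistency of the marginals giving a finitely-additive cylinder set function, Minlos' key estimate converting continuity of $\CF$ at the origin plus a Hilbert--Schmidt embedding $H_m \hookrightarrow H_n$ into tightness and hence countable additivity, and uniqueness via the $\pi$-system of cylinder sets. You also correctly identify countable additivity as the crux and correctly note that nuclearity is what rescues it (your parenthetical that the theorem fails for $\mathcal{N}=L^2$ is exactly the right sanity check).

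Two caveats worth recording. First, your reduction to an increasing family of Hilbertian seminorms presumes a countably-Hilbert (nuclear Fr\'echet) space; this covers $\S$, which is the case the paper actually uses, but the theorem as stated also applies to $\D$, which is a non-metrizable inductive limit of nuclear Fr\'echet spaces, and there one must run the argument on the Fr\'echet steps and patch. Second, your final assertion that $\mathscr{P}_s$ is \emph{concentrated on a single} $H_{-m}$ is slightly too strong in general: continuity of $\CF$ at $0$ furnishes, for each $\varepsilon>0$, an index $n(\varepsilon)$ and radius $\delta(\varepsilon)$, and $n(\varepsilon)$ may grow as $\varepsilon \to 0$. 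The Minlos estimate then yields, for each $\varepsilon$, an $m(\varepsilon)$ and a ball in $H_{-m(\varepsilon)}$ whose exterior has outer measure at most $\varepsilon$; this family of tightness bounds suffices for countable additivity on $\mathcal{N}'$, but the support need not sit inside one fixed $H_{-m}$ unless $\CF$ is continuous with respect to a single seminorm. Neither caveat breaks the proof of existence and uniqueness on $\mathcal{N}'$; they only require the phrasing of the tightness conclusion to be loosened as indicated.
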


Minlos-Bochner's theorem is an extension of Bochner's theorem to the infinite-dimensional setting. It is our key tool to define probability measures on the nuclear space $\mathcal{S}'$.  \\

\section{A Criterion for Existence of Sparse Processes}\label{sec:Criterion}

\subsection{White L\'evy-Schwartz Noise}\label{subsec:LevyNoise}

We first recall some definitions and results from Gelfand and Vilenkin's theory of generalized stochastic processes \cite{GelVil4}, especially the definition of white L\'evy noises on $\D$. 

\subsection*{Innovation Processes on $\D'$}

\begin{defn}\label{def_noise}
A stochastic process $w$ on {$\mathcal{D}'$ characterized by the probability measure $\mathscr{P}_w$} is said to be
\begin{itemize}
\item \emph{with independent value at every point} if the random variables $X_{1}=\langle w,\varphi_{1}\rangle$ and $X_{2}=\langle w,\varphi_{2} \rangle$ are independent whenever $\varphi_{1}, \varphi_{2} \in \D$ have disjoint supports (\emph{i.e.}, if {$\varphi_{1} \varphi_{2}\equiv 0$) and}
\item \emph{stationary} if the shifted process $w(\cdot -\r_0)$ has the same finite-dimensional {marginals as} $w$. 
\end{itemize}
\end{defn}

{The properties in Definition \ref{def_noise} }can be inferred from the characteristic functional of the process. {Specifically, the independence property corresponds to the condition} 
\begin{eqnarray} \label{CF_indep}
\widehat{\mathscr{P}}_w(\varphi_{1}+\varphi_{2})=\widehat{\mathscr{P}}_w(\varphi_{1})\widehat{\mathscr{P}}_w(\varphi_{2})
\end{eqnarray}
 whenever $\varphi_{1}$ and $\varphi_{2}$ have disjoint supports \cite{GelVil4}.  Moreover, {$w$} is stationary iff. it has the same characteristic functional as the process $w(\cdot -\r_0)$ defined by $\langle w(\cdot-\r_0),\varphi\rangle = \langle w , \varphi(\cdot+\r_0)\rangle$, \emph{i.e.} iff. $\forall \varphi \in \D$ and $\mathbf{r}_0 \in \R^d$, 
\begin{eqnarray} \label{CF_statio}
  \widehat{\mathscr{P}}_w (\varphi (\cdot - \mathbf{r}_0)) = \widehat{\mathscr{P}}_w(\varphi).
  \end{eqnarray}

The functional 
\begin{align}\label{eq:char_func_form}
\CF(\varphi)=\exp\left(\int_{\mathbb{R}^{d}}f(\varphi(\mathbf{r}))\du\mathbf{r}\right)
\end{align} with $f(0)=0$ {satisfies the equations \eqref{CF_indep} and \eqref{CF_statio}}. Moreover, Gelfand and Vilenkin give necessary and sufficient conditions on $f$ so that the functional is continuous and positive-definite over $\D$, and hence, defines a valid innovation process $w$. 

\begin{theo}  [Gelfand-Vilenkin] \label{theoGV}
{D}efine $\CF(\varphi)=\exp\left(\int_{\mathbb{R}^{d}}f(\varphi(\mathbf{r}))\du\mathbf{r}\right)$ on $\D$ where $f$ is a continuous function from $\R$ to $\C$ with $f(0)=0$. The following conditions are equivalent:
\begin{itemize}
\item[(i)] There exists a (unique) {probability} measure $\mathscr{P}_w$ on $\D'$ such that $$\widehat{\mathscr{P}} (\varphi) = \CF_w(\varphi).$$
\item[(ii)] The functional  $\CF$ is a continuous, positive-definite, and normalized ($\CF(0)=1$) functional on $\mathcal{D}$.
\item[(iii)] There exist $\mu\in\mathbb{R}$, $\sigma^{2}\in\mathbb{R}^{+}$, and {a L\'evy measure $V$ with} $\int_{\mathbb{R\backslash}\{0\}}\min(1,a^{2})V(\du a)<\infty$ such that
\begin{align}\label{eq:LevyExponent}
f(\omega)=\ju\mu\omega-\frac{\sigma^{2}\omega^{2}}{2}+\int_{\mathbb{R\backslash}(0)}\left(\eu^{\ju a\omega}-1-\ju\omega a \1_{|a|<1}\right)V(\du a).
\end{align}
\end{itemize}
\end {theo}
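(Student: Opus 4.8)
The plan is to prove the Gelfand-Vilenkin theorem by establishing a cycle of implications. The equivalence $(i) \Leftrightarrow (ii)$ is essentially a restatement of the Minlos-Bochner theorem (Theorem \ref{theoMB}), since $\D$ is nuclear: a normalized, continuous, positive-definite functional on $\D$ is precisely the characteristic functional of a unique measure on $\D'$, and conversely every characteristic functional has these three properties by Proposition \ref{CFprop}. So the real content lies in the equivalence $(ii) \Leftrightarrow (iii)$, which characterizes \emph{which} functions $f$ yield a valid $\CF$ of the exponential form \eqref{eq:char_func_form}.

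For the direction $(iii) \Rightarrow (ii)$, I would first verify normalization, which is immediate since $f(0)=0$ forces $\CF(0)=\exp(0)=1$. Continuity on $\D$ requires showing that $\varphi_n \to \varphi$ in $\D$ (uniform convergence of all derivatives on a common compact support) implies $\int f(\varphi_n) \to \int f(\varphi)$; here the decomposition of $f$ into its drift, Gaussian, and jump parts is handled separately, with the L\'evy measure integrability condition $\int \min(1,a^2)\,V(\du a)<\infty$ ensuring the integrand in the jump term is controlled (the $-1-\ju\omega a\1_{|a|<1}$ subtraction makes it $O(\omega^2)$ near $a=0$ and $O(1)$ for large $a$). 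The positive-definiteness is the crux: I would argue that for each fixed point $\r$, the function $\omega \mapsto \exp(f(\omega))$ is a classical characteristic function (this is exactly the L\'evy-Khintchine representation of an infinitely divisible distribution, with \eqref{eq:LevyExponent} being the L\'evy exponent). Positive-definiteness of $\CF$ on finite collections $\varphi_1,\dots,\varphi_N$ then reduces, via a Riemann-sum discretization of the integral $\int f(\varphi(\r))\du\r$, to positive-definiteness of products of such one-dimensional characteristic functions over a partition of space into small cells, using the independence structure that the exponential-of-integral form encodes.

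For the converse $(ii) \Rightarrow (iii)$, I would start from the assumption that $\CF(\varphi)=\exp(\int f(\varphi))$ is positive-definite and continuous. By testing against functions $\varphi$ that are constant (equal to $\omega$) on a set of measure $t$ and zero elsewhere, one extracts that $\omega \mapsto \exp(t\, f(\omega))$ must be a characteristic function for every $t>0$; equivalently $\exp(f(\omega))$ is \emph{infinitely divisible}. The classical L\'evy-Khintchine theorem then yields precisely the representation \eqref{eq:LevyExponent} with the stated constraints on $\mu$, $\sigma^2$, and $V$. This step imports the finite-dimensional L\'evy-Khintchine structure theorem as the engine.

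The main obstacle I anticipate is the positive-definiteness argument in $(iii)\Rightarrow(ii)$: rigorously passing from the pointwise (per-cell) infinite divisibility to positive-definiteness of the full functional requires a careful limiting argument controlling the Riemann approximation of $\int f(\varphi(\r))\,\du\r$ uniformly, and justifying that the limit of positive-definite functionals remains positive-definite. Establishing continuity of the jump-integral term uniformly over the relevant class of test functions is the secondary technical hurdle, again leaning on the $\min(1,a^2)$ integrability of $V$.
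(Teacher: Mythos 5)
The paper offers no proof of this theorem: it is imported verbatim from Gelfand and Vilenkin \cite{GelVil4}, so there is no internal argument to compare against; your outline must be judged on its own. As such, it reconstructs the classical proof correctly in its overall architecture: $(i)\Leftrightarrow(ii)$ is indeed immediate from Minlos-Bochner (Theorem \ref{theoMB}) together with Proposition \ref{CFprop}, since $\D$ is nuclear, and $(iii)\Rightarrow(ii)$ via per-cell L\'evy-Khintchine factors and Riemann discretization is the standard route: each approximant $\varphi\mapsto\prod_i \exp\left(|Q_i| f(\varphi(\r_i))\right)$ is the composition of a positive-definite function on $\R^m$ (characteristic function of a vector with independent infinitely divisible components, since $tf$ is again a L\'evy exponent with triplet $(t\mu,t\sigma^2,tV)$) with the linear map $\varphi\mapsto(\varphi(\r_1),\dots,\varphi(\r_m))$. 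Two simplifications are worth noting. First, the ``uniform control'' you anticipate as the main obstacle is not needed: positive-definiteness is checked on finite families, so pointwise convergence of the Riemann sums at the finitely many differences $\varphi_i-\varphi_j$ (immediate, since $f\circ\varphi$ is continuous with compact support) suffices, and pointwise limits of positive-definite functionals are positive-definite. Second, continuity on $\D$ does not require your drift/Gaussian/jump decomposition: a convergent sequence in $\D$ has common compact support and is uniformly bounded, so continuity of $f$ (a hypothesis of the theorem) plus dominated convergence settles it; the fine $\min(1,a^2)$ estimates are what the paper develops separately (Lemma \ref{mainlemma}, Corollary \ref{corgconti}, Proposition \ref{mainprop}) for the genuinely harder extension to $L^p$ and $\S$, not for $\D$.

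The one step where your sketch, as written, would fail is in $(ii)\Rightarrow(iii)$: the test functions ``constant equal to $\omega$ on a set of measure $t$ and zero elsewhere'' are not in $\D$, and step functions are not limits of $\D$-functions in the $\D$-topology, so you cannot literally evaluate $\CF$ there or pass to them by continuity of $\CF$. The repair is routine but must be said: take mollified indicators $\chi_n\in\D$ with $0\leq\chi_n\leq1$ and $\chi_n\to\1_Q$ a.e.\ for a cell $Q$ of measure $t$; then dominated convergence (using $f(0)=0$ and boundedness of $f$ on compacts) gives $\CF(\omega\chi_n)\to \eu^{tf(\omega)}$ for each $\omega$. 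Each $\omega\mapsto\CF(\omega\chi_n)$ is a genuine one-dimensional characteristic function (Bochner's theorem applied to the restriction of $\CF$ to the line $\omega\mapsto\omega\chi_n$, or directly the law of $\langle w,\chi_n\rangle$ under the measure furnished by $(i)$), and the pointwise limit $\eu^{tf}$ is continuous at $\omega=0$ because $f$ is continuous; L\'evy's continuity theorem then certifies $\eu^{tf}$ as a characteristic function for every $t>0$, i.e., $\eu^{f}$ is infinitely divisible, and the finite-dimensional L\'evy-Khintchine theorem yields exactly \eqref{eq:LevyExponent}. With that patch, your cycle of implications is sound and matches the classical argument the paper cites.
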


{A function $f$ that can be represented in the form of \eqref{eq:LevyExponent} is called a \emph{L\'evy exponent}. The function is alternatively characterized by the triplet $(\mu,\sigma^{2},V)$ known as the \emph{L\'evy triplet}.}

\begin{defn}
A \emph{white L\'evy noise}, or {an} \emph{innovation process} on $\D'$ is a generalized stochastic process $w$ with probability measure $\mathscr{P}_w$ on $\mathcal{D}'$ {that is characterized by $\widehat{\mathscr{P}}_w(\varphi) = \exp\left(\int_{\mathbb{R}^{d}}f(\varphi(\mathbf{r}))\du\mathbf{r}\right)$ for some L\'evy exponent $f$.}
{ In addition, t}he functional $F(\varphi)= \log \CF_w (\varphi)$ is called the \emph{generalized L\'evy exponent} associated with $w$. 
\end{defn}

A white L\'evy  noise on $\D'$ has an independent value at every point and is stationary, which justifies the``white noise'' nomenclature. By Theorem \ref{theoGV}, we have a one-to-one correspondence between  L\'evy exponents $f$ and the white L\'evy noises on $\D'$. Interestingly, one has the same one-to-one correspondence between the family of infinite-divisible probability laws and L\'evy exponents. Indeed, $p_X$ is an infinite-divisible pdf if and only if $\widehat{p}_X(\omega) = \eu^{f(\omega)}$ where $f(\cdot)$ is a valid L\'evy exponent \cite{Sato94}. \\

Gelfand and Vilenkin's constructive {result on} the characteristic functional of {an} innovation process on $\mathcal{D}$  {resolves} the central {barrier} of the positive-definiteness requirement in {applying the} Minlos-Bochner theorem. Indeed, {we shall show in Proposition \ref{positdefin} that, for extending} Theorem \ref{theoGV} {to} larger spaces of test functions{, we only require to prove the continuity of the functional \eqref{eq:char_func_form} as} the positive-definiteness is automatically inherited.

\begin{prop}\label{positdefin} 
{Let $\T$ be any of $\mathcal{S}$, $L^p$, or $L^p\cap L^q$ for $p,q>0$. Assume $f$ is  a L\'evy exponent such that the functional $ \widehat{\mathscr{P}} (\varphi)= \exp\left(\int_{\mathbb{R}^{d}}f(\varphi(\mathbf{r}))\du\mathbf{r}\right)$ is well-defined (namely, $f(\varphi(\r)) \in L^1$) and is continuous by the natural topology of $\T$}. Then, $\CF$ is also positive-definite over $\T$.
\end{prop}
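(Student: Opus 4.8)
The plan is to leverage the fact that positive-definiteness of $\CF$ over $\D$ is already guaranteed, and then to propagate this property to the larger space $\T$ by a density-and-continuity argument. Since $f$ is assumed to be a L\'evy exponent, the equivalence (ii)$\Leftrightarrow$(iii) in Theorem \ref{theoGV} tells us that $\CF(\varphi)=\exp\left(\int_{\R^d} f(\varphi(\r))\du\r\right)$ is already continuous, normalized, and positive-definite on $\D$. (Note that $\CF$ is automatically well-defined on $\D$: any $\varphi\in\D$ is bounded with compact support, so $f(\varphi(\r))$, which vanishes wherever $\varphi$ does because $f(0)=0$, lies in $L^1$.) The goal is therefore only to transfer the inequality $\sum_{i,j} a_i \overline{a}_j \CF(\varphi_i-\varphi_j)\geq 0$ from test functions in $\D$ to arbitrary elements of $\T$.

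The second ingredient is the density of $\D$ in $\T$ for each admissible choice, in its respective natural topology. For $\T=\S$ this is classical. For $\T=L^p$ with $0<p<\infty$, compactly supported smooth functions are dense (when $0<p<1$ one works with the metric $d_p$; $L^p$ is then still a topological vector space---not locally convex---in which addition and subtraction are continuous). For $\T=L^p\cap L^q$, a single truncation-and-mollification sequence converges to a given $\varphi$ simultaneously in $L^p$ and in $L^q$ (truncating and then mollifying a fixed $\varphi$ produces functions of $\D$ approximating $\varphi$ in both exponents by dominated convergence), so $\D$ is again dense in the natural joint topology. In all cases $\T$ is a topological vector space containing $\D$ as a dense subspace, with continuous vector-space operations.

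With these two facts in hand, the argument is a limiting procedure. Fix $N\in\N$, coefficients $a_1,\dots,a_N\in\C$, and functions $\varphi_1,\dots,\varphi_N\in\T$. By density, choose sequences $(\varphi_k^{(n)})_n\subset\D$ with $\varphi_k^{(n)}\to\varphi_k$ in $\T$ for each $k$. Then $\varphi_i^{(n)}-\varphi_j^{(n)}\to\varphi_i-\varphi_j$ in $\T$ by continuity of subtraction, and the assumed continuity of $\CF$ on $\T$ yields $\CF(\varphi_i^{(n)}-\varphi_j^{(n)})\to\CF(\varphi_i-\varphi_j)$. Since every difference $\varphi_i^{(n)}-\varphi_j^{(n)}$ lies in $\D$, positive-definiteness over $\D$ gives $\sum_{i,j} a_i \overline{a}_j \CF(\varphi_i^{(n)}-\varphi_j^{(n)})\geq 0$ for each $n$; passing to the limit preserves the inequality because a limit of nonnegative reals is nonnegative. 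This establishes positive-definiteness of $\CF$ over $\T$.

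The step I expect to carry the real content is not the limiting procedure but the verification that $\D$ is dense in $\T$ in the correct topology together with continuity of the vector-space operations---particularly in the non-locally-convex regime $0<p<1$ and in the intersection space $L^p\cap L^q$, where one must check that the natural topology genuinely makes $\D$ dense and renders the argument $\varphi_i-\varphi_j$ convergent. Everything else reduces to the positive-definiteness on $\D$ already furnished by Theorem \ref{theoGV}.
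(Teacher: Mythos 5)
Your proposal is correct and follows essentially the same route as the paper's proof: positive-definiteness on $\D$ from Theorem \ref{theoGV}, density of $\D$ in each admissible $\T$, and the assumed continuity of $\CF$ on $\T$ to pass the inequality $\sum_{i,j} a_i \overline{a}_j \CF(\varphi_i-\varphi_j)\geq 0$ to the limit. Your explicit truncation-and-mollification justification of density in $L^p$ for $0<p<1$ and in $L^p\cap L^q$ merely fills in a step the paper dismisses as ``similar'' to the standard cases.
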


Note that the {topological} structure {of} $L^p \cap L^q$ depends on the {relative values of $p$ and $q$ with respect to $1$.} If, for instance, $p<1 \leq q$, then $L^p\cap L^q$ is a metric space {with} $d_p(f,g)  + \lVert f-g \rVert_q$ (see Table \ref{table:notations} or \cite{RudinFA}).

\begin{proof}
{From Theorem \ref{theoGV} w}e know that $\CF$ is {well-}defined, continuous, normalized and positive-definite over $\D$. We then use a density argument to extend the positive-definiteness to $\T$. Indeed, $\mathcal{D}$ is dense in {all possible $\T$ of Proposition \ref{positdefin}.} This {result} is well-known for $\S$ and the $L^p$ spaces {with} $p\geq 1$. {T}he proof for {$L^p$ spaces with} $0<p<1$ and for $L^p \cap L^q$ {spaces is also similar}.

Let $\varphi_1,\dots,\varphi_N \in \T$ and $a_1,\dots,a_N\in \C$. {Since $\mathcal{D}$ is dense in $\T$, t}here exist sequences $(\varphi_n^k)_{1\leq n\leq N,k\in \mathbb{N}}$ of functions in $\D$ such that $ \lim\limits_{k  \leftarrow +\infty} \varphi_n^k = \varphi_n$ for all $n$. 
Then, {by} using the continuity of $\CF$ over $\T$, {we obtain that}
\begin{equation}  \sum_{1\leq i,j\leq N} a_i \overline{a}_j \widehat{\mathscr{P}}(\varphi_i - \varphi_j)  = \lim\limits_{k\rightarrow +\infty} \left( \sum_{1\leq i,j\leq N} a_i \overline{a}_j \widehat{\mathscr{P}}(\varphi_i^k - \varphi_j^k)\right) \geq 0.
\end{equation}

\end{proof}

\subsection*{Innovation Processes on $\S'$} 

{We r}ecall that the Minlos-Bochner theorem is  {valid for any} nuclear space {including} $\S' \subset \D'${, which allows us to} generalize Definition \ref{def_noise} to $\S'$.  Moreover, it is possible to characterize the independence and the stationarity of a generalized process on $\S'$ directly on its characteristic functional in the same way we did for $\D'$ in \eqref{CF_indep} and \eqref{CF_statio}.  {Next, we introduce} a sufficient condition on the L\'evy exponent $f$ (more precisely, on the L\'evy measure) to {extend the notion of innovation process to $\mathcal{S}'$ by applying Theorem \ref{theoMB}.}  We first give some definitions.

\begin{defn}\label{LSmeasure}
Let $\mathscr{M}(\mathbb{R}\backslash\{0\})$ be the set of Radon measures on $\mathbb{R}\backslash\{0\}$. For $V \in \mathscr{M}(\R\backslash \{0\})$ and $k\geq0$, we denote
\begin{eqnarray}
\mu_{k}(V) & = & \int_{\mathbb{R}\backslash\{0\}}|a|^{k}V(\du a),\\
\mu_{k}^0(V)  & = & \int_{0<|a|<1}|a|^{k}V(\du a), \\
 \mu_{k}^{\infty} (V) & = & \int_{|a|\geq1}|a|^{k}V(\du a).
\end{eqnarray}
with $\mu_{k}(V)=\mu_{k}^{0}(V)+\mu_{k}^{\infty}(V)$. {Further, w}e define
\begin{align}
\mathscr{M}(p,q)=\Big\{ V\in\mathscr{M}\left(\mathbb{R}\backslash\{0\}\right)\: \big|\:\mu_{q}^{0}<\infty\:\text{and }\mu_{p}^{\infty}<\infty \Big\} .
\end{align}
{Hence, t}he set of L\'evy measures {corresponds to} $\mathscr{M}(0,2)$. We also define the set of \emph{L\'evy-Schwartz measures} {as a subset of L\'evy measures adapted for extending the framework of Gelfand and Vilenkin to the Schwartz space $\S$ (see Theorem \ref{theoS}) by}
\begin{align}
\mathscr{M}(0^{+},2) \, = \, {\bigcup_{\epsilon>0}}\mathscr{M}(\epsilon,2) ~ \subset\mathscr{M}(0,2) .
\end{align}
\end{defn}

{It is not difficult to check the following properties of the sets $\mathscr{M}(p,q)$:}
\begin{itemize}
\item $\mathscr{M}(p_{1},q_{1})\cap\mathscr{M}(p_{2},q_{2})=\mathscr{M}\big(\max(p_{1},p_{2})\,,\,\min(q_{1},q_{2})\big)$,
\item $\mathscr{M}(p_{1},q_{1})\cup\mathscr{M}(p_{2},q_{2})=\mathscr{M}\big(\min(p_{1},p_{2}) \,,\, \max(q_{1},q_{2})\big)$, 
\item $\mathscr{M}(p_{1},q_{1})\subset\mathscr{M}(p_{2},q_{2})$ $\Leftrightarrow$
$p_{1}\geq p_{2}$ and $q_{1}\leq q_{2}$.
\end{itemize}

{The interest of Definition \ref{LSmeasure} is to focus separately on the behavior} of $V$ around $0$ and at  {infinities}. {It also helps in classifying the innovation processes according to their L\'evy measure. For instance,} Poisson innovations correspond to $V\in \mathscr{M} (0,0)$ {while innovations with} finite variance {are obtained for} $V \in \mathscr{M} (2,2)$. {In Theorem \ref{theoS}, we state our main} result concerning innovation processes over $\mathcal{S}'$.

\begin{theo}  [Tempered innovation processes]  \label{theoS}
Suppose that $f$ is a L\'evy exponent with triplet $(\mu,\sigma^{2},V)${, where $V$ is} a L\'evy-Schwartz measure {(Definition \ref{LSmeasure})}. Then, there exists a unique measure $\mathscr{P}$ on $\mathcal{S}'$ such that 
\begin{align}
\widehat{\mathscr{P}}(\varphi)=\int_{\mathcal{S}'}\eu^{\ju\langle u, \varphi\rangle}\du\mathscr{P}(u)=\exp\left(\int_{\mathbb{R}^{d}}f(\varphi(\mathbf{r}))\du\mathbf{r}\right) {, ~~~ \forall \varphi \in \mathcal{S}.}
\end{align}
The underlying generalized stochastic process $w$ {associated with $\mathscr{P}$ }
is called a tempered innovation process or a white L\'evy-Schwartz  noise. 
\end{theo}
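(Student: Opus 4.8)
The strategy is to invoke the Minlos–Bochner theorem (Theorem \ref{theoMB}), since $\S$ is nuclear. By Proposition \ref{CFprop} and the structure established in Theorem \ref{theoGV}, the functional $\CF(\varphi)=\exp\left(\int_{\R^d}f(\varphi(\r))\du\r\right)$ is already known to be normalized and positive-definite on $\D$. By Proposition \ref{positdefin}, once I establish continuity on $\S$, positive-definiteness transfers automatically by the density of $\D$ in $\S$. Therefore the entire problem reduces to two points: first, that the functional is \emph{well-defined} on $\S$ (that is, $f(\varphi(\r))\in L^1(\R^d)$ for every $\varphi\in\S$), and second, that it is \emph{continuous} with respect to the Schwartz topology. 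The assumption that $V$ is a L\'evy-Schwartz measure, i.e. $V\in\mathscr{M}(\epsilon,2)$ for some $\epsilon>0$, is precisely what should make both of these go through.

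For well-definedness, I would split the L\'evy exponent \eqref{eq:LevyExponent} into its three parts. The drift term $\ju\mu\omega$ and Gaussian term $-\sigma^2\omega^2/2$ produce $\ju\mu\varphi(\r)$ and $-\sigma^2\varphi(\r)^2/2$, which lie in $L^1$ since $\varphi\in\S\subset L^1\cap L^2$. The delicate term is the integral against $V$. I would control the integrand $g(\omega)=\int_{\R\backslash\{0\}}\left(\eu^{\ju a\omega}-1-\ju\omega a\1_{|a|<1}\right)V(\du a)$ by the standard two-regime estimate: near $a=0$ use the bound $|\eu^{\ju a\omega}-1-\ju\omega a|\leq \tfrac{1}{2}a^2\omega^2$, which pairs with $\mu_2^0(V)<\infty$; for $|a|\geq1$ use $|\eu^{\ju a\omega}-1|\leq 2$ together with $|\ju\omega a\1_{|a|<1}|=0$, giving a bound by $2\mu_0^\infty(V)$, but this alone only yields $|g(\omega)|\lesssim \omega^2+1$, which need not be integrable. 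This is where $\epsilon>0$ enters: for the large-$a$ part I instead use the interpolation bound $|\eu^{\ju a\omega}-1|\leq C_\epsilon|a\omega|^\epsilon$ (valid since $|\eu^{\ju t}-1|\leq\min(2,|t|)\leq 2^{1-\epsilon}|t|^\epsilon$), so that the large-$a$ contribution is bounded by $C|\omega|^\epsilon\,\mu_\epsilon^\infty(V)$, finite because $V\in\mathscr{M}(\epsilon,2)$. Since $\varphi\in\S$ guarantees $\varphi\in L^\epsilon\cap L^2$, the bound $|f(\varphi(\r))|\lesssim |\varphi(\r)|+\varphi(\r)^2+|\varphi(\r)|^\epsilon$ is integrable, establishing that $\CF$ is well-defined on $\S$.

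For continuity, I would show that $\varphi\mapsto\int_{\R^d}f(\varphi(\r))\du\r$ is continuous, which suffices by continuity of the exponential. Taking $\varphi_n\to\varphi$ in $\S$, I would aim to show $\int f(\varphi_n)\to\int f(\varphi)$. The natural route is dominated convergence applied to $f(\varphi_n(\r))-f(\varphi(\r))$, using the Schwartz convergence to get pointwise convergence and a uniform $L^1$ domination from the estimates above applied to the tail quantity $|\varphi_n|+\varphi_n^2+|\varphi_n|^\epsilon$; convergence in $\S$ controls all the weighted sup-norms $\lVert(1+\|\r\|_2^\alpha)\varphi_n\rVert_\infty$, which bounds each of these $L^1$ norms uniformly in $n$ by choosing $\alpha$ large enough. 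The main obstacle I anticipate is handling the $|\varphi(\r)|^\epsilon$ term with small exponent $\epsilon<1$: the map $t\mapsto|t|^\epsilon$ is only H\"older, not Lipschitz, so I must be careful to derive the domination directly from the weighted-sup estimates on $\S$ rather than from any naive Lipschitz bound, and verify that the integrability at spatial infinity (where $\|\r\|_2^{-\alpha\epsilon}$ must beat $\du\r$) forces $\alpha\epsilon>d$, i.e. $\alpha>d/\epsilon$, which the rapid decay of Schwartz functions permits. Once continuity is established, Minlos–Bochner delivers the unique measure $\mathscr{P}$ on $\S'$ and completes the proof.
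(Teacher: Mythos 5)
Your proposal is correct, and it reaches the same two reduction points as the paper (well-definedness and continuity, with normalization and positive-definiteness handled exactly as in the paper via Proposition \ref{positdefin} and Minlos--Bochner), but the continuity step takes a genuinely different route. For well-definedness your direct two-regime estimate on $g$ (the bound $\tfrac{1}{2}a^{2}\omega^{2}$ near $a=0$ paired with $\mu_{2}^{0}(V)<\infty$, and $|\eu^{\ju a\omega}-1|\leq 2^{1-\epsilon}|a\omega|^{\epsilon}$ paired with $\mu_{\epsilon}^{\infty}(V)<\infty$) recovers exactly the content of the paper's Corollary \ref{corgconti}; the paper instead obtains that corollary as the special case $\omega_{2}=0$ of a two-point difference estimate (Lemma \ref{mainlemma}), proved by splitting $g(\omega_{1})-g(\omega_{2})$ into four trigonometric pieces. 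The real divergence is in continuity: the paper proves the quantitative modulus bound $|F(\varphi)-F(\psi)|\leq\nu_{1}H_{p_{\min}}(\varphi,\psi)+\nu_{2}H_{p_{\max}}(\varphi,\psi)$ with $H_{p}(\varphi,\psi)=\sqrt{\left(\|\varphi\|_{p}^{p}+\|\psi\|_{p}^{p}\right)\|\varphi-\psi\|_{p}^{p}}$ (Proposition \ref{mainprop}), which gives continuity of $F$ on all of $L^{\epsilon}\cap L^{2}$, and Theorem \ref{theoS} follows since convergence in $\S$ implies convergence in $L^{\epsilon}\cap L^{2}$. You instead use dominated convergence with the uniform Schwartz envelope $|\varphi_{n}(\r)|\leq M_{\alpha}(1+\|\r\|_{2}^{\alpha})^{-1}$ for $\alpha>d/\epsilon$, together with the continuity of $f$ and the metrizability of $\S$ (so sequential continuity suffices); this is more elementary, perfectly adequate for the theorem as stated, and you are right to insist that the domination come from the weighted sup-norms rather than from any Lipschitz bound on $t\mapsto|t|^{\epsilon}$, which fails. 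What the paper's extra work buys is generality: the $H_{p}$ estimate is reused verbatim in Theorem \ref{maintheo}, where the arguments of $F$ are $\mathrm{T}\varphi$ and one only knows convergence in $L^{p_{\min}}\cap L^{p_{\max}}$, so no pointwise rapidly decaying envelope is available and your DCT argument would not transfer directly (one would have to pass to a.e.-convergent dominated subsequences). In short, your proof establishes Theorem \ref{theoS} itself by a lighter, $\S$-specific argument, while the paper's proof routes through the stronger $L^{p}$-continuity result that serves as the backbone of the whole compatibility framework.
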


\begin{proof}
{The function space} $\S$ is nuclear, which {justifies the application of} the Minlos-Bochner theorem. Obviously, $\widehat{\mathscr{P}} (0)=1${, and $\widehat{\mathscr{P}}(\varphi)$ is also positive-definite, given it is continuous (Proposition \ref{positdefin}). However, n}ote that it is not {\emph{a priori} evident} that $f(\varphi(\r))$ is {even} integrable for $\varphi \in \S${, whereas the integrability is easily understood for $\varphi \in \D$, since $f$ is continuous and $\varphi$ is of finite support.} 
{We prove Theorem \ref{theoS} by successively establishing the integrability of $f(\varphi(\r))$ for $\varphi \in \mathcal{S}$ and the continuity of the functional $\CF$ on $\mathcal{S}$.}

The proof is based on a control {on} the generalized L\'evy exponent developed  in Section \ref{subsec:Continuity}. To use this result we first remark that $\S$ is a subspace of all $L^p$ spaces. Moreover, the continuity {of a functional over} $\S$ implies {its} continuity over {any} $L^p$ space {with} $p>0$. 

{Since} $V$ is a L\'evy-Schwartz measure, there exists $0<\epsilon\leq 1$ such that $V \in \mathscr{M}(\epsilon,2)$. {U}sing Corollary \ref{corgconti} (Section 3.3), we know that there exist $\kappa_1$ and $\kappa_2 {\geq}0$ such that, for all $\varphi \in \S$,
\begin{align*}
\int_{\R^d} |f(\varphi(\r)) | \, \du\r \leq  \mu \|\varphi\|_1 + \frac{\sigma^2}{2}\|\varphi\|_2^2 + \kappa_1 \|\varphi\|_\epsilon^\epsilon + \kappa_2 \|\varphi\|_2^2.
\end{align*}
{As} $\|\varphi\|_p $ {is finite} for all $p>0$, {we conclude that} $F(\varphi)$ is well-defined {over} $\S$. {In addition, from} Proposition \ref{mainprop} (Section 3.3) we know that there exist $\nu_1$ and $\nu_2{\geq}0$ such that, for all $\varphi,\psi\in\S$,
$$|F(\varphi)-F(\psi)| \leq \nu_1 \sqrt{(\|\varphi\|_\epsilon^\epsilon+\|\psi\|_\epsilon^\epsilon)(\|\varphi-\psi\|_\epsilon^\epsilon)} + \nu_2 \sqrt{(\|\varphi\|_2^2+\|\psi\|_2^2)(\|\varphi-\psi\|_2^2)}.$$
Consequently, if $\varphi_n \rightarrow \varphi$ in $\S$, then $F(\varphi_n)\rightarrow F(\varphi)$ in $\mathbb{C}$. {This shows that} $\widehat{\mathscr{P}}(\varphi)=\exp(F(\varphi))$ is continuous {over} $\S${, which completes the proof by applying the Minlos-Bochner theorem}. 
\end{proof}

\begin{table} [tb]
\centering
\caption{Tempered innovation processes}\label{table:Distributions}
\bigskip
\begin{tabular}{c| c c }
  \hline
  \hline
	Distribution & L\'evy triplet & Generalized L\'evy exponent\\
	and parameters & $(\mu,\sigma^2,V(\du a))$ & $F(\varphi)$ \\
  	\hline
  	Gaussian & $(\mu,\sigma^2,0)$ & $\ju\mu \left( \int \varphi \right) - \frac{\sigma^2\lVert \varphi \rVert_2^2}{2}$ \\
	$(\mu,\sigma^2) \in \R\times\R_+$ &  & \\
	\\
	S$\alpha$S & $\left( 0,0,\frac{C_{\alpha,\gamma}}{|a|^{\alpha+1}}\du a\right)$ & $-\gamma^\alpha \lVert \varphi \rVert_\alpha^\alpha$ \\ 
	$(\alpha,\gamma) \in (0,2)\times\R_+$ &$V\in \mathscr{M}(\alpha^-,\alpha^+)$ & \\
	\\
	{Variance Gamma} & $\left( 0,0, \frac{\eu^{-\lambda |a|}}{|a|}\du a \right)$ & $\int_{\R^d} \log\left( \frac{\lambda^2}{\lambda^2 + \varphi(\r)^2} \right) \du \r$\\
	$\lambda \in \R_+$ & $V\in \mathscr{M}(\infty,0^+)$ & \\
	\\
	Poisson & $(0,0,\lambda P(\du a))$ & $- \ju \lambda \mu_1^0(P) \left( \int \varphi \right) + \lambda \int_{\R^d}\int_{\R \backslash \{0\}} \left(\eu^{\ju a\varphi(\r)} -1 \right) P(\du a)\mathrm{d}\r$ \\  	$\lambda>0$, $P$ probability measure & $V\in\mathscr{M}(0^+,0)$ &  \\
 \hline 
 \hline
\end{tabular}
\end{table}

The  restriction {$V\in \mathscr{M}(0^{+},2)$} in Theorem \ref{theoS} is extremely mild and plays no role in all cases of practical interest (Table \ref{table:Distributions}). Yet, it is possible to construct examples of L\'evy measures {$V \in \mathscr{M}(0,2) \setminus \mathscr{M}(0^{+},2)$} such as
\begin{equation}
V(\du a)=\frac{\du a}{|a|\log^2(2+|a|)}.
\end{equation}

We give in Table 2 the main examples of white L\'evy-Schwartz noises: Gaussian noises, symmetric $\alpha$-stable noises (noted {by} S$\alpha$S, see \cite{Taqqu1994}), {Variance Gamma} noise ({which includes the Laplace distribution and is linked} with TV-regularization \cite{BosKamNil}), and Poisson noises.

\subsection*{Link Between Innovation Processes on $\D'$ and $\S'$}

Let $f$ be a L\'evy exponent with a L\'evy-Schwartz measure. {According to Theorems \ref{theoGV} and \ref{theoS}, we can  define,  }
\begin{itemize}
\item a measure $\mathscr{P}_{\D'}$ on $\D'$ such that $\CF_{\D'}(\varphi) = \exp\left( \int f(\varphi(\r)) \du\r \right)$ {for $\varphi\in\D$, and}
\item a measure $\mathscr{P}_{\S'}$ on $\S'$ such that $\CF_{\S'}(\varphi) = \exp\left( \int f(\varphi(\r)) \du\r \right)$ {for $\varphi\in\S$}, respectively.
\end{itemize}
We discuss here the compatibility {of} the two measures. Let $\mathcal{N}=\D$ or $\S$. {First, we recall the method of constructing a} measure on the nuclear space $\mathcal{N}$. {For given $\boldsymbol{\varphi}=(\varphi_1,\cdots,\varphi_N) \in \mathcal{N}^N$ and a Borelian subset $B$ of $\R^N$, a cylindric set is defined as
\begin{align}
A^{\mathcal{N'}}_{\boldsymbol{\varphi},B}=\{u\in \mathcal{N}', (\langle u ,\varphi_1\rangle ,\cdots\langle u ,\varphi_N\rangle) \in B\}.
\end{align}
If $\mathcal{C}_{\mathcal{N'}}$ denotes the collection of all such cylindric sets, then, according to the Minlos-Bochner theorem, the  $\sigma$-field $\mathcal{A}_{\mathcal{N'}} = \sigma(\mathcal{C}_{\mathcal{N'}})$ generated by the cylindric sets properly specifies a probability measure on $\mathcal{N}'$. In Proposition \ref{propTribu}, we compare the $\sigma$-fields $\mathcal{A}_{\mathcal{S}'}$  and $\mathcal{A}_{\mathcal{D}'}$.
Note that it is  not obvious \emph{a priori} that the two $\sigma$-fields are closely related. The main difficulty is to see that the space $\mathcal{S}'$ itself is an element of $\mathcal{A}_{\mathcal{D}'}$. The result of Proposition \ref{propTribu} is necessary to be able to compare the two measures $\mathscr{P}_{\mathcal{S}'}$ and $\mathscr{P}_{\mathcal{D'}}$.

}
%

{\begin{prop} \label{propTribu}
We have the  relations
\begin{eqnarray}
\mathcal{A}_{\mathcal{S}'} & = & \{ A \cap \mathcal{S}' \ | \ A  \in \mathcal{A}_{\mathcal{D}'}  \} \\
& \subset  & \mathcal{A}_{\mathcal{D}'}.
\end{eqnarray}

\end{prop}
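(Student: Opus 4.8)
The plan is to split the statement into the trace identity $\mathcal{A}_{\S'}=\{A\cap\S' : A\in\mathcal{A}_{\D'}\}$ and the inclusion $\{A\cap\S' : A\in\mathcal{A}_{\D'}\}\subseteq\mathcal{A}_{\D'}$, and to isolate the single nontrivial ingredient. Write $\iota:\S'\hookrightarrow\D'$ for the canonical inclusion; it is well defined and injective because $\D$ is dense in $\S$, so a tempered distribution is determined by its restriction to $\D$, and on $\D$ the $\S'$- and $\D'$-pairings coincide. In this notation the two claims read $\mathcal{A}_{\S'}=\iota^{-1}(\mathcal{A}_{\D'})$ and $\iota^{-1}(\mathcal{A}_{\D'})\subseteq\mathcal{A}_{\D'}$. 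The second reduces to showing $\S'\in\mathcal{A}_{\D'}$, which I expect to be the \textbf{main obstacle}.

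First I would prove the trace identity, which is soft and does not require $\S'$ to be measurable in $\D'$. Since $\mathcal{A}_{\S'}=\sigma\big(\langle\cdot,\varphi\rangle:\varphi\in\S\big)$, for the inclusion $\subseteq$ it suffices to make each evaluation $\langle\cdot,\varphi\rangle|_{\S'}$ measurable for $\iota^{-1}(\mathcal{A}_{\D'})$. Fix $\varphi\in\S$ and choose $\psi_n\in\D$ with $\psi_n\to\varphi$ in $\S$ (density of $\D$ in $\S$). Each $v\mapsto\langle v,\psi_n\rangle$ is $\mathcal{A}_{\D'}$-measurable on $\D'$, and for every $u\in\S'$ continuity gives $\langle u,\varphi\rangle=\lim_n\langle u,\psi_n\rangle$; hence $\langle\cdot,\varphi\rangle|_{\S'}$ is the restriction to $\S'$ of the $\mathcal{A}_{\D'}$-measurable map $v\mapsto\limsup_n\langle v,\psi_n\rangle$, and is therefore $\iota^{-1}(\mathcal{A}_{\D'})$-measurable. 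For the reverse inclusion, the family $\{A\subseteq\D':A\cap\S'\in\mathcal{A}_{\S'}\}$ is a $\sigma$-field on $\D'$ containing every cylinder $A^{\D'}_{\psi,B}$ with $\psi\in\D\subseteq\S$, because $A^{\D'}_{\psi,B}\cap\S'=A^{\S'}_{\psi,B}\in\mathcal{A}_{\S'}$; since such cylinders generate $\mathcal{A}_{\D'}$, we get $A\cap\S'\in\mathcal{A}_{\S'}$ for all $A\in\mathcal{A}_{\D'}$.

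It remains to establish $\S'\in\mathcal{A}_{\D'}$, for then each trace $A\cap\S'$ is the intersection of two members of $\mathcal{A}_{\D'}$ and the desired inclusion follows at once. I would describe $\S'$ inside $\D'$ by a countable battery of measurable conditions. Fix an increasing sequence $(q_k)_{k\in\N}$ of seminorms generating the Fréchet topology of $\S$. A distribution $u\in\D'$ extends to a tempered distribution iff it is continuous on $\D$ for the topology induced by $\S$; as $\D$ is dense in the complete space $\S$, this holds iff $u$ is bounded on some $q_k$-ball, i.e. iff $N_k(u):=\sup\{|\langle u,\varphi\rangle|:\varphi\in\D,\ q_k(\varphi)\leq1\}<\infty$ for some $k$. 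Thus $\S'=\bigcup_{k\in\N}\{u\in\D':N_k(u)<\infty\}$, and it is enough to show that each $u\mapsto N_k(u)$ is $\mathcal{A}_{\D'}$-measurable.

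The delicate point is that $N_k$ is a supremum over the uncountable ball $\{q_k\leq1\}$, whereas only countable operations preserve measurability; this is where the separability structure of $\D$ enters. Writing $\D=\bigcup_m\D_{K_m}$ for an exhausting sequence of compacts $K_m$, each piece $\D_{K_m}$ is a separable metrizable Fréchet space carrying the subspace topology of $\D$, so the closed ball $\{\varphi\in\D_{K_m}:q_k(\varphi)\leq1\}$ has a countable dense subset $D_{k,m}$. Since every $u\in\D'$ restricts to a continuous functional on each $\D_{K_m}$, the supremum of $|\langle u,\cdot\rangle|$ over the full ball equals its supremum over the countable set $D_k:=\bigcup_m D_{k,m}$; hence $N_k(u)=\sup_{\varphi\in D_k}|\langle u,\varphi\rangle|$ is a countable supremum of $\mathcal{A}_{\D'}$-measurable functions, thus measurable. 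This yields $\{N_k<\infty\}\in\mathcal{A}_{\D'}$ and $\S'\in\mathcal{A}_{\D'}$, completing the argument. The one step demanding genuine care is this reduction of the ball to $D_k$: it must use only the $\tau_\D$-continuity of $\langle u,\cdot\rangle$, available for every $u\in\D'$ before any temperedness is known, rather than an assumed $\S$-continuity of $u$.
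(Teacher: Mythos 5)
Your proof is correct and follows the same overall architecture as the paper's: both decompose the statement into the trace identity $\mathcal{A}_{\S'}=\{A\cap\S' \,|\, A\in\mathcal{A}_{\D'}\}$ (proved via density of $\D$ in $\S$) and the key measurability claim $\S'\in\mathcal{A}_{\D'}$ (proved via a countable battery of seminorm bounds together with separability). Two differences in execution are worth noting. First, you bypass the paper's step (1) --- the reduction, by transfinite induction, to cylinders with open bases --- by working instead with measurable evaluation maps (writing $\langle\cdot,\varphi\rangle|_{\S'}$ as the restriction of $\limsup_n\langle\cdot,\psi_n\rangle$) and a good-sets argument over cylinders with arbitrary Borel bases; this is cleaner and fully equivalent, whereas the paper needs open bases so that membership of $(\langle u,\psi_1\rangle,\dots,\langle u,\psi_N\rangle)$ in $\Omega$ is stable under the limit along approximating sequences. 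Second, and more substantively, your treatment of $\S'\in\mathcal{A}_{\D'}$ is more careful than the paper's: the paper writes $\S'=\bigcup_{C}\bigcup_{\alpha}\bigcap_{n}A_{\varphi_n,[-CN_\alpha(\varphi_n),\,CN_\alpha(\varphi_n)]}$ with a single sequence $(\varphi_n)\subset\D$ dense in $\S$, but for the inclusion ``$\supseteq$'' one must upgrade the countable bounds to $|\langle u,\varphi\rangle|\leq CN_\alpha(\varphi)$ for \emph{all} $\varphi\in\D$, and since such a $u$ is a priori only continuous for the topology of $\D$ (temperedness is what is being proved), density in $\S$ alone does not suffice --- an $\S$-approximating sequence need not converge in $\D$, as its supports may spread. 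Your stratified reduction to countable dense subsets $D_{k,m}$ of the seminorm balls inside each $\D_{K_m}$, where $\langle u,\cdot\rangle$ is genuinely continuous for \emph{every} $u\in\D'$, supplies exactly the missing justification, and you correctly identify this as the one delicate point of the argument.
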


\begin{proof}  We decompose the proof in four steps.  \\

(1) We denote $\mathcal{C}_{\mathcal{N'}}^{\Omega}$ the collection of cylindric set $A^{\mathcal{N'}}_{\boldsymbol{\varphi},B}$ with $\Omega$ an open set of $\R^N$. We claim that $\sigma(\mathcal{C}_{\mathcal{N'}}^{\Omega}) = \mathcal{A}_{\mathcal{N}'}$. This result is obtained by a transfinite induction using the fact that the open sets generates the $\sigma$-field of the Borelian sets.\\

(2) We show that $\S' \in \mathcal{A}_{\D'}$. For $\alpha \in \mathbb{N}$, we consider (see Table 1) $N_\alpha (\varphi) = \displaystyle{\sum_{0\leq |\textbf{k}| \leq \alpha}} \lVert \partial^{\textbf{k}} \varphi \rVert_{\infty,\alpha}$ where $|\textbf{k}| = k_1+\cdots+k_d$ and $\partial^{\textbf{k}}\varphi = \frac{\partial^{\textbf{k}}}{\partial r_1^{k_1} \cdots \partial r_d^{k_d}} \varphi$. \\

A generalized function $u \in \D'$ is tempered iff. there exist $\alpha \in \mathbb{N}$ and $C>0$ such that, for all $\varphi \in \D$, $|\langle u , \varphi \rangle | \leq CN_\alpha (\varphi)$. Then, $u$ can be uniquely extended to a continuous linear form on $\S$. The space $\S'$ is identified as a subspace of $\D'$ \cite{Bony2001}. In addition, we know that $\S$ is separable: there exists a sequence $(\varphi_n)\in \S^{\mathbb{N}}$ that is dense in $\S$. Because $\D$ is dense in $\S$, we can also imposed that the $\varphi_n$ are in $\D$. Consequently, we have
\begin{eqnarray}
\S' &=& \displaystyle{\bigcup_{C\in \mathbb{N}}}\displaystyle{\bigcup_{\alpha\in \mathbb{N}}}\displaystyle{\bigcap_{n\in \mathbb{N}}} A_{\varphi_n,[-CN_\alpha{\varphi_n},CN_\alpha{\varphi_n}]} \in \mathcal{A_{\D'}}.
\end{eqnarray}

A direct consequence is that $\{ A \cap \mathcal{S}' \ | \ A  \in \mathcal{A}_{\mathcal{D}'}  \}\subset \mathcal{A}_{\D'}$. \\

(3) First, we remark that $\{ A \cap \mathcal{S}' \ | \ A  \in \mathcal{A}_{\mathcal{D}'}  \} \cap \S' $ is a $\sigma$-field on $\S'$ (as a restriction of a $\sigma$-field on $\D'$) containing $\mathcal{C}_{\D'}^{\Omega} \cap \S'$ and then $\sigma(\mathcal{C}_{\D'}^{\Omega} \cap \S')$. Consequently, it is enough to show that $\mathcal{C}_{\S'}^{\Omega} \subset \sigma(\mathcal{C}_{\D'}^{\Omega} \cap \S')$. Let us fix $\psi_1,\cdots,\psi_N \in \S$ and $\Omega$ an open set of $\R^N$.
Let $(\varphi_{n,k})_{n=1,\cdots,N, \ k\in \mathbb{N}}$ be $N$ sequences of functions in $\D$ converging in $\S$ to $\psi_n$ for all $n \in \{1,\cdots,N\}$. 
Because $\Omega$ is open, for all $u\in \S'$, $(\langle u, \psi_1 \rangle, ..., \langle u, \psi_N \rangle) \in \Omega$ iff. $(\langle u, \varphi_{1,k} \rangle, ..., \langle u, \psi_{N,k} \rangle) \in \Omega$ for $k$ large enough. Moreover, because $\D \subset \S$, we have $A_{\boldsymbol{\psi},\Omega}^{\D'} \cup \S' = A_{\boldsymbol{\psi},\Omega}^{\S'}$. Thus,
\begin{eqnarray*}
A^{\S'}_{\boldsymbol{\psi},B} & = & \bigcup_{p\in \mathbb{N}} \bigcap_{k\geq p}  \left( A_{\boldsymbol{\varphi_k},B}^{\S'} \right)\\
& = &  \bigcup_{p\in \mathbb{N}} \bigcap_{k\geq p}   \left( A_{\boldsymbol{\varphi_k},B}^{\D'} \cap \S' \right) \\
& \in &  \sigma\left(\mathcal{C}_{\D'}^{\Omega} \cap \S' \right).
\end{eqnarray*}
As a consequence, $\mathcal{A}_{\S'} \subset\{ A \cap \mathcal{S}' \ | \ A  \in \mathcal{A}_{\mathcal{D}'}  \}$.  \\

(4) For the other inclusion, we first notice that $\{ A \cap \mathcal{S}' \ | \ A  \in \mathcal{A}_{\mathcal{D}'}  \}= \sigma (\mathcal{C}_{\D'}^\Omega \cap \S')$ (the restriction of the generator family generates the restrictive $\sigma$-field). Thus, we just need to prove that $A_{\boldsymbol{\psi},\Omega}^{\D'} \cup \S' \in \mathcal{A}_{\S'}$ for all $\psi \in \D^N$ and $\Omega$ an open set of $\R^N$, which is obvious because, as we said, $A_{\boldsymbol{\psi},\Omega}^{\D'} \cup \S' = A_{\boldsymbol{\psi},\Omega}^{\S'}$. Consequently,  $ \{ A \cap \mathcal{S}' \ | \ A  \in \mathcal{A}_{\mathcal{D}'}  \} \subset \mathcal{A}_{\S'}$. With (3), we obtain that $\{ A \cap \mathcal{S}' \ | \ A  \in \mathcal{A}_{\mathcal{D}'}  \} =   \mathcal{A}_{\mathcal{S}'}$.
\end{proof}
}

We now focus on measures {over $\mathcal{A}_{\D'}$ and $\mathcal{A}_{\S'}$ that define innovation processes.}

\begin{theo} \label{theoDS}
Let $f$ be a L\'evy exponent with L\'evy measure $V \in \mathscr{M}(0^+,2)$. {By} $\mathscr{P}_{\D'}$ and $\mathscr{P}_{\S'}$ {we denote} the measures on $\D'$ and $\S'${, respectively, that are defined by the} characteristic functional  $\exp \left( \int_{\R^d} f(\varphi (\r ))d\r \right)$ {(over $\D$ and $\S$, respectively)}. The two measures are compatible  in the sense that
\begin{equation}
\forall A \in \mathcal{A}_{\mathcal{S}'}, \ \mathscr{P}_{\D'}(A)=\mathscr{P}_{\S'}(A).
\end{equation}
In particular, $\mathscr{P}_{\D'}(\S')=1$ and $\mathscr{P}_{\D'}(\D' \backslash \S') = 0$.
\end{theo}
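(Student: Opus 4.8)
The plan is to deduce everything from the uniqueness part of the Minlos--Bochner theorem (Theorem \ref{theoMB}) together with the identification of $\sigma$-fields in Proposition \ref{propTribu}, rather than attempting any direct tail estimate on the innovation. The starting observation is that Proposition \ref{propTribu} makes the inclusion $\iota:\S'\hookrightarrow\D'$ measurable from $(\S',\mathcal{A}_{\S'})$ to $(\D',\mathcal{A}_{\D'})$, since $\iota^{-1}(A)=A\cap\S'\in\mathcal{A}_{\S'}$ for every $A\in\mathcal{A}_{\D'}$. I would then push $\mathscr{P}_{\S'}$ forward along $\iota$ to obtain a probability measure $\widetilde{\mathscr{P}}:=\iota_*\mathscr{P}_{\S'}$ on $(\D',\mathcal{A}_{\D'})$. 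By construction it is concentrated on $\S'$, i.e.
\[
\widetilde{\mathscr{P}}(\D'\setminus\S')=\mathscr{P}_{\S'}\big(\iota^{-1}(\D'\setminus\S')\big)=\mathscr{P}_{\S'}(\emptyset)=0.
\]

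Next I would compute the characteristic functional of $\widetilde{\mathscr{P}}$ on $\D$. Since $\D\subset\S$, the duality pairing is consistent, $\langle\iota(v),\varphi\rangle=\langle v,\varphi\rangle$ for $v\in\S'$ and $\varphi\in\D$, so the change-of-variables formula for push-forwards gives
\[
\widehat{\widetilde{\mathscr{P}}}(\varphi)=\int_{\D'}\eu^{\ju\langle u,\varphi\rangle}\du\widetilde{\mathscr{P}}(u)=\int_{\S'}\eu^{\ju\langle v,\varphi\rangle}\du\mathscr{P}_{\S'}(v)=\widehat{\mathscr{P}}_{\S'}(\varphi).
\]
For $\varphi\in\D\subset\S$, Theorem \ref{theoS} yields $\widehat{\mathscr{P}}_{\S'}(\varphi)=\exp\!\big(\int_{\R^d}f(\varphi(\r))\du\r\big)=\widehat{\mathscr{P}}_{\D'}(\varphi)$ by Theorem \ref{theoGV}. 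Thus $\widetilde{\mathscr{P}}$ and $\mathscr{P}_{\D'}$ are two probability measures on the nuclear space $\D'$ that share the same characteristic functional on $\D$.

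Finally I would invoke uniqueness. The characteristic functional determines the finite-dimensional marginals (as recorded earlier via $\widehat{p}_X(\boldsymbol{\omega})=\widehat{\mathscr{P}}_s(\omega_1\varphi_1+\cdots+\omega_N\varphi_N)$), and these determine the measure on the cylinder sets, which form a $\pi$-system (closed under intersection, since $A^{\D'}_{\boldsymbol{\varphi},B_1}\cap A^{\D'}_{\boldsymbol{\psi},B_2}=A^{\D'}_{(\boldsymbol{\varphi},\boldsymbol{\psi}),B_1\times B_2}$) generating $\mathcal{A}_{\D'}$; by the standard monotone-class argument, equivalently by the uniqueness assertion of Theorem \ref{theoMB}, it follows that $\widetilde{\mathscr{P}}=\mathscr{P}_{\D'}$ on $\mathcal{A}_{\D'}$. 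The two conclusions then drop out immediately: $\mathscr{P}_{\D'}(\S')=\widetilde{\mathscr{P}}(\S')=1$, hence $\mathscr{P}_{\D'}(\D'\setminus\S')=0$; and for any $A\in\mathcal{A}_{\S'}$, writing $A=A_0\cap\S'$ with $A_0\in\mathcal{A}_{\D'}$ (Proposition \ref{propTribu}) and using that $\widetilde{\mathscr{P}}$ is concentrated on $\S'$,
\[
\mathscr{P}_{\D'}(A)=\widetilde{\mathscr{P}}(A_0\cap\S')=\widetilde{\mathscr{P}}(A_0)=\mathscr{P}_{\S'}(\iota^{-1}(A_0))=\mathscr{P}_{\S'}(A).
\]

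The measurability of $\iota$ and the push-forward computation are routine. The genuine subtlety — the step I would single out as the crux — is the realization that the support statement $\mathscr{P}_{\D'}(\S')=1$ should not be proved by estimating the mass of $\mathscr{P}_{\D'}$ outside $\S'$, but instead falls out for free once $\widetilde{\mathscr{P}}$ and $\mathscr{P}_{\D'}$ are matched through their common characteristic functional and the uniqueness in Minlos--Bochner. One must also be careful that the equality of characteristic functionals is only needed, and only available, on $\D$, which is precisely the test-function space whose cylinder sets generate $\mathcal{A}_{\D'}$; restricting $\varphi$ to $\D$ rather than $\S$ is exactly what makes the comparison legitimate.
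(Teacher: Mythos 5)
Your proof is correct and is essentially the paper's own argument: your push-forward $\iota_*\mathscr{P}_{\S'}$ is exactly the measure $\mathscr{P}(A)=\mathscr{P}_{\S'}(A\cap\S')$ that the paper defines, and you match it to $\mathscr{P}_{\D'}$ in the same way, by comparing characteristic functionals on $\D$ and invoking the uniqueness in Minlos--Bochner (your $\pi$-system remark merely spells out why that uniqueness holds). The only cosmetic difference is that you make the measurability of the inclusion $\iota\colon\S'\hookrightarrow\D'$ explicit, which the paper leaves implicit via Proposition \ref{propTribu}.
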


\begin{proof}
From $\mathscr{P}_{\S'}$, we define a new measure on $\D'$ by $\mathscr{P}(A) = \mathscr{P}_{\S'} (A\cap \S')$ for $A\in\mathcal{A}_{\D'}$. We {claim} that $\mathscr{P}=\mathscr{P}_{\D'}$. For $\varphi \in \D$, we have
\begin{eqnarray}
\widehat{\mathscr{P}}(\varphi) &=& \int_{\D'} \eu^{\ju\langle u,\varphi\rangle} \du\mathscr{P}(u) \nonumber \\
		&=& \int_{\S'} \eu^{\ju\langle u,\varphi \rangle} \du\mathscr{P}(u) \nonumber \\
		&=& \int_{\S'} \eu^{\ju\langle u,\varphi\rangle } \du\mathscr{P}_{\S'}(u) \label{eq:line3}  \\ 
		&=& \exp\left( \int_{\R^d} f(\varphi(\r)) \du\r \right)  \label{eq:line4} \\
		&=& \widehat{\mathscr{P}}_{\D'}(\varphi). \nonumber
\end{eqnarray}
We used that $\mathscr{P}(\D'\backslash \S') = 0$ in \eqref{eq:line3} and that $\mathscr{P}$ restricted to $\S'$ coincides with $\mathscr{P}_{\S'}$ in \eqref{eq:line4}. The Minlos-Bochner theorem ensures that $\mathscr{P} = \mathscr{P}_{\D'}$. Fix $A \in \S'$. According to Proposition \ref{propTribu}, $A \in \mathcal{A}_{\D'}$ and $\mathscr{P}_{\D'} (A)$ is well-defined. Consequently, we have {that} $\mathscr{P}_{\D'} (A) = \mathscr{P} (A) = \mathscr{P}_{\S'}(A\cap \S') =\mathscr{P}_{\S'}(A)$. For $A=\S'$, we obtain $\mathscr{P}_{\D'}{(\S')} =1$ and, consequently, $\mathscr{P}_{\D'} (\D' \backslash \S') =0$.
\end{proof}

The essential fact is that the theory of Gelfand already defines probability measures concentrated on the tempered generalized functions. 

\subsection{Sparse Processes}\label{subsec:SparseProcess}

{In the remainder of the paper, we restrict our attention to $\mathcal{N} =\S$. In Section \ref{subsec:LevyNoise}, we defined stochastic processes on $\S'$, which are \emph{bona fide} innovation processes. }
According to \cite{Unser_etal2011}, these innovation processes are split in{to} two categories: (i) white Gaussian noises corresponding to {zero L\'evy measure and} the L\'evy exponent $f(\omega) = \ju\mu \omega - \frac{\sigma^2}{2} \omega^2$, (ii) {non-Gaussian white noises with non zero L\'evy measures, which are referred to as} sparse innovation processes. The reason is that all non-Gaussian infinite-divisible {distributions} are necessarily {more compressible} than Gaussian{s} \cite{Unser_etal2011}.

Our {next} goal is to define processes $s$ such that $\mathrm{L}s=w$ is an innovation process. {With the rationale as above, the processes leading to} non-Gaussian innovation{s} $w$ will be called sparse.  The  fact that the {linear} operator $\mathrm{L}$ whitens  $s$ implies that  there exists a deterministic operator  $\mathrm{L}^{-1}$  that induces the dependency structure of the process $s$. If we follow the formal equalities $ \langle s, \varphi \rangle = \langle \mathrm{L}^{-1} w , \varphi \rangle = \langle w , \mathrm{L}^{*-1} \varphi \rangle$, we then  interpret the model $\mathrm{L}s=w$ as
\begin{equation} \label{eq:fL}      
\CF_s(\varphi) = \exp\left(\int_{\mathbb{R}^{d}}f(\mathrm{L}^{*-1}\varphi(\mathbf{r}))\du\mathbf{r}\right).
\end{equation}

 {However, we do not know \emph{a priori} if   \eqref{eq:fL} defines a valid characteristic functional. This is especially true if  (a) the operator $\mathrm{L}^{*-1}$ is continuous from $\mathcal{S}$ to some function space $\mathcal{T}$ (not necessarily nuclear) and if (b) the functional $\psi \mapsto \exp\left(\int_{\mathbb{R}^{d}}f(\psi(\mathbf{r}))d\mathbf{r}\right)$ is well-defined and continuous over   $\mathcal{T}$.} More precisely, we are {examining the} compatibility {of the L\'evy exponent} $f$  {with the linear operator} $\mathrm{L}$ to define  a characteristic functional.  Concretely,  we  are concerned with the function spaces $\mathcal{T}=\S$, $\mathcal{R}$, $L^{p}$-spaces, {and} intersection{s} of $L^{p}$-spaces. {We state in Theorem \ref{maintheo}   a sufficient compatibility condition. For the sake of generality, the operator $\mathrm{L}^{*-1}$ is replaced with the generic  operator $\mathrm{T}$.} 

\begin{theo} \label{maintheo} (Compatibility conditions) Let $f$ be a L\'evy exponent with triplet $(\mu,\sigma^2,V)$ and {let} $\mathrm{T}$ {be} a linear operator from $\S$ to $\S'$. Suppose we have $0<p_{\min}\leq p_{\max}\leq 2$ {and}
\begin{itemize} 
\item $V \in \mathscr{M}(p_{\min},p_{\max})$,
\item {$p_{\min} \leq 1$}, if $\mu \neq 0$ or $V$ non-symmetric,
\item {$p_{\max}=2$}, if $\sigma^2 \neq 0$, {and}
\item $\mathrm{T}$ is a linear and continuous operator from $\mathcal{S}$ to $L^{p_{\min}}\cap L^{p_{\max}}$.
\end{itemize}
Then, there exists a unique probability measure $\mathscr{P}_s$ on $\mathcal{S}'$
with
\begin{equation}
\widehat{\mathscr{P}_s}(\varphi)=\exp\left(\int_{\mathbb{R}^{d}}f(\mathrm{T}\varphi(\mathbf{r}))\du\mathbf{r}\right).
\end{equation}
\end{theo}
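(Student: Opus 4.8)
The plan is to verify the hypotheses of the Minlos-Bochner theorem (Theorem \ref{theoMB}) on the nuclear space $\S$ for the functional $\widehat{\mathscr{P}_s}(\varphi)=\exp\left(\int_{\R^d} f(\mathrm{T}\varphi(\r))\du\r\right)$. Since $\S$ is nuclear, it suffices to establish three facts: that the functional is well-defined (i.e.\ $f(\mathrm{T}\varphi(\r)) \in L^1$), that it is continuous on $\S$, and that it is positive-definite and normalized. The normalization $\widehat{\mathscr{P}_s}(0)=1$ is immediate from $f(0)=0$. The strategy is to reduce everything to the already-proven continuity estimates for the ``pure'' innovation functional, by factoring the map as the composition of $\mathrm{T}$ with the functional $\psi \mapsto \exp\left(\int_{\R^d} f(\psi(\r))\du\r\right)$.

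First I would show well-definedness and continuity of the inner functional on the target space $L^{p_{\min}}\cap L^{p_{\max}}$. By hypothesis $V\in\mathscr{M}(p_{\min},p_{\max})$, so the L\'evy measure has the required tail control both near $0$ (exponent $p_{\max}$) and at infinity (exponent $p_{\min}$). The side conditions are exactly what is needed so the affine and Gaussian parts are absorbed: if $\mu\neq 0$ or $V$ is non-symmetric, the linear term $\ju\mu\omega$ (and the $\ju\omega a\1_{|a|<1}$ compensator) forces an $L^1$-type control, which is available since $p_{\min}\leq 1$; if $\sigma^2\neq 0$, the quadratic part needs the $L^2$ norm, guaranteed by $p_{\max}=2$. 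Invoking Corollary \ref{corgconti} and Proposition \ref{mainprop} (Section 3.3) applied on $\mathcal{T}=L^{p_{\min}}\cap L^{p_{\max}}$ rather than on $\S$, I obtain for all $\psi\in L^{p_{\min}}\cap L^{p_{\max}}$ a bound of the form
\begin{equation*}
\int_{\R^d}|f(\psi(\r))|\,\du\r \leq C_1\|\psi\|_{p_{\min}}^{p_{\min}} + C_2\|\psi\|_{p_{\max}}^{p_{\max}},
\end{equation*}
so the inner functional $F_0(\psi)=\int f(\psi(\r))\du\r$ is well-defined there, together with a Lipschitz-type Hölder continuity estimate on $F_0$ in the metric $d_{p_{\min}}+\|\cdot\|_{p_{\max}}$ of that space. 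This makes $\psi\mapsto\exp(F_0(\psi))$ continuous on $L^{p_{\min}}\cap L^{p_{\max}}$.

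Next I would compose with $\mathrm{T}$. Since $\mathrm{T}$ is linear and continuous from $\S$ into $L^{p_{\min}}\cap L^{p_{\max}}$, for $\varphi\in\S$ the function $\mathrm{T}\varphi$ lies in that intersection, so $f(\mathrm{T}\varphi(\r))\in L^1$ and the functional is well-defined on $\S$; and if $\varphi_n\to\varphi$ in $\S$ then $\mathrm{T}\varphi_n\to\mathrm{T}\varphi$ in $L^{p_{\min}}\cap L^{p_{\max}}$, whence $\widehat{\mathscr{P}_s}(\varphi_n)\to\widehat{\mathscr{P}_s}(\varphi)$, giving continuity on $\S$. For positive-definiteness I would apply Proposition \ref{positdefin} on $\mathcal{T}=L^{p_{\min}}\cap L^{p_{\max}}$ to conclude that $\exp(F_0)$ is positive-definite there; composing with the linear map $\mathrm{T}$ preserves positive-definiteness, since for any $\varphi_i\in\S$ and $a_i\in\C$ the sums $\sum_{i,j}a_i\overline{a}_j\,\widehat{\mathscr{P}_s}(\varphi_i-\varphi_j)=\sum_{i,j}a_i\overline{a}_j\exp\!\big(F_0(\mathrm{T}\varphi_i-\mathrm{T}\varphi_j)\big)$ are exactly the defining sums for $\exp(F_0)$ evaluated at the points $\mathrm{T}\varphi_i$. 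With the functional shown to be normalized, continuous, and positive-definite on the nuclear space $\S$, the Minlos-Bochner theorem furnishes the unique measure $\mathscr{P}_s$ on $\S'$.

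The main obstacle I expect is not the composition bookkeeping but the verification that Corollary \ref{corgconti} and Proposition \ref{mainprop} genuinely transfer from $\S$ to the target $L^p$-intersection with the correct matching of exponents to the L\'evy triplet. The delicate point is the regime $p_{\min}<1$, where $L^{p_{\min}}$ is only a complete metric space (not normed) and the continuity estimate must be phrased in terms of the metric $d_{p_{\min}}$; one must check that the generalized L\'evy exponent estimate still controls $|F_0(\psi)-F_0(\psi')|$ uniformly on bounded sets, and that the non-symmetric/Gaussian side conditions are precisely the ones preventing a divergence of the relevant moment integrals $\mu_{p_{\min}}^\infty$ and $\mu_{p_{\max}}^0$ of $V$.
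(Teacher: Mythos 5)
Your proposal is correct and follows essentially the same route as the paper: both verify the Minlos--Bochner hypotheses on $\S$ by using Corollary \ref{corgconti} for well-definedness and Proposition \ref{mainprop} for the continuity estimate on $L^{p_{\min}}\cap L^{p_{\max}}$, then compose with the continuous linear operator $\mathrm{T}$ and invoke Proposition \ref{positdefin} (together with the linearity of $\mathrm{T}$) for positive-definiteness. The transfer you flag as a potential obstacle is a non-issue, since the paper's Corollary \ref{corgconti} and Proposition \ref{mainprop} are already stated directly on $L^{p_{\min}}\cap L^{p_{\max}}$ rather than on $\S$, and your explicit composition argument for positive-definiteness simply spells out what the paper's proof states tersely.
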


\begin{proof} 
We apply the Minlos-Bochner theorem to the functional {$\widehat{ \mathscr{P} }(\varphi) = \exp\left(\int_{\mathbb{R}^{d}}f(\mathrm{T}\varphi(\mathbf{r}))\du\mathbf{r}\right)$}. It is normalized because {the linearity of $\mathrm{T}$   implies} $f(\mathrm{T}\{0\})=f(0)=0$. {The linearity of $\mathrm{T}$ also enables us to conclude the positive-definiteness of the functional from Proposition \ref{positdefin}, given that it is} well-defined and continuous over $\S$. The continuity is {established by applying the bounds in Section \ref{subsec:Continuity}} on the generalized L\'evy exponent $F(\varphi) =\int_{\mathbb{R}^d} f(\varphi(\r))\du\r$. {In particular}, Corollary \ref{corgconti} implies {the existence of} $\kappa_1$ and $\kappa_2 \geq 0$ such that, for all $\varphi \in \S$,
$$\int_{\R^d} |f(T\varphi(\r)) | \du\r \leq  \mu \|T\varphi\|_1 + \frac{\sigma^2}{2}\|T\varphi\|_2^2 + \kappa_1 \|T\varphi\|_{p_{\min}}^{p_{\min}} + \kappa_2 \|T\varphi\|_{p_{\max}}^{p_{\max}},$$
and the assumptions on $f$ and $\mathrm{T}$ ensure that the integral is finite. Th{us}, $\widehat{\mathscr{P}}$ is well-defined. Under the assumption{s} of {T}heorem {\ref{maintheo}}, we can also apply Proposition \ref{mainprop} and find $\nu_1$ and $\nu_2{\geq}0$ such that, for all $\varphi, \psi \in \S$,
\begin{align*}
|F(\mathrm{T}\varphi)-F(\mathrm{T}\psi)|  \leq  \phantom{+}& \nu_1 \sqrt{(\|\mathrm{T}\varphi\|_{p_{\min}}^{p_{\min}}+\|\mathrm{T}\psi\|_{p_{\min}}^{p_{\min}})\|\mathrm{T}\varphi - \mathrm{T}\psi\|_{p_{\min}}^{p_{\min}}} \\
+& \nu_2 \sqrt{(\|\mathrm{T}\varphi\|_{p_{\max}}^{p_{\max}}+\|\mathrm{T}\psi\|_{p_{\max}}^{p_{\max}})\|\mathrm{T}\varphi - \mathrm{T}\psi\|_{p_{\max}}^{p_{\max}}}.
\end{align*}
{Now, if} $\varphi_n \rightarrow \varphi$ in $\S${, then}, $\mathrm{T}\varphi_n \rightarrow \mathrm{T}\varphi$ in $L^{p_{\min}}\cap L^{p_{\max}}$ (continuity of $\mathrm{T}$) and $F(\mathrm{T}\varphi_n)\rightarrow F(\mathrm{T}\varphi)$ in $\mathbb{C}$. Hence, $\widehat{\mathscr{P}}(\varphi)$ is continuous over $\S$. According to   Theorem \ref{theoMB}, there exists a unique $s$ such that $\widehat{\mathscr{P}_s} = \widehat{\mathscr{P}}$.

\end{proof}

\paragraph{Comparison with $p$-admissiblity.} 

Theorem \ref{maintheo} gives a compatibility condition between $f$ and $\mathrm{L}$. Another condition, called $p$-admissibility, {was introduced} in \cite{Unser_etal2011}.  A L\'evy exponent $f$ is said {to be} $p$-admissible if $|f(\omega)|+|\omega f'(\omega)| \leq C|\omega|^p$ for all $\omega \in \R$, where $1\leq p <+\infty$ and $C$ is a positive constant. {Although $p$-admissibility is sufficient in many practical cases, we argue that it is generally more restrictive than the assumptions in Theorem \ref{maintheo}}.
\begin{itemize}
	\item {The} $p$-admissibility {condition is restricted} to $p\geq 1$ {and}   requires the differentiability of the L\'evy exponent. The most natural sufficient condition to assure  differentiability is that $\mu_1(V)<+\infty$ ({or} $\mu_1^\infty(V)<+\infty$ {when} $V$ is symmetric). {In contrast, Theorem \ref{maintheo} does not impose the differentiability constraint and includes scenarios with $p<1$. }
	
	\item {The notion of $\mathscr{M}(p,q)$ introduced in Definition \ref{LSmeasure} distinguishes the limitations imposed by the L\'evy measure $V$ at $a\rightarrow 0$ and $a\rightarrow \infty$. As a result, Theorem \ref{maintheo} allows for a richer family of L\'evy exponents $f$. For instance, suppose }
that $f=f_\alpha+f_\beta$ is the sum of two S$\alpha$S L\'evy exponents with $\alpha < \beta$. Then{, although $f_\alpha$ and $f_\beta$ can be $\alpha$-admissible and $\beta$-admissible, respectively,} $f$ is not $p$-admissible for any $p>0${. It is not hard to check that $f$ is covered by Theorem \ref{maintheo}.} 
	
	 \item The {assumptions of} Theorem \ref{maintheo} can {also} be slightly restrictive. The S$\alpha$S case is {a} generic example. 
	 We denote $V_\alpha$ the L\'evy measure of the S$\alpha$S L\'evy exponent $f_\alpha$. Then, because $\mu_{\alpha}^{\infty}(V_\alpha) = \mu_{\alpha}^0(V_\alpha) = + \infty$, the
{Theorem \ref{maintheo} only allows for} {$V_\alpha \in \mathscr{M}(\alpha^- ,\alpha^+) = \bigcup_{\epsilon >0} \mathscr{M} (\alpha - \epsilon, \alpha + \epsilon)$}, but the condition $\varphi \in L^\alpha$ is clearly sufficient (and necessary) in practice. 
However, we know that $f_\alpha(\omega)=-|\omega|^\alpha$ is $\alpha$-admissible. 
\end{itemize}

\subsection{Continuity of {C}haracteristic {F}unctional{s}}\label{subsec:Continuity}

This section is devoted to {the derivation of} bounds on the generalized L\'evy exponent{s to conclude the continuity results required in} Theorems \ref{theoS} and \ref{maintheo}. We first introduce some notations and useful inequalities. 

\begin{defn}
Let $p>0$, $x,y\in\mathbb{R}$, and $f,g\in L^{p}(\mathbb{R}^{d})$. We define
\begin{eqnarray}
h_{p}(x,y) &=& \sqrt{\left(|x|^{p}+|y|^{p}\right)|x-y|^{p}}, \\
H_{p}(f,g) &=& \sqrt{\left(\|f\|_{p}^{p}+\|g\|_{p}^{p}\right)\|f-g\|_{p}^{p}}.
\end{eqnarray}
\end{defn}

\begin{lemma}\label{lemmaArash}
{Let $p,q>0$.}
\begin{enumerate}[(i)]
\item \label{arashxy1} For all $x,y\in\mathbb{R}$ we have {that} 
\begin{eqnarray} 
|x-y|^{p}& \leq & \max\left(1,2^{\frac{p-1}{2}}\right)h_{p}(x,y), \\
|x^{2}-y^{2}|^{p/2} &\leq & \max\left(1,2^{\frac{p-1}{2}}\right)h_{p}(x,y).
\end{eqnarray}

\item \label{arashfg} For $f,g\in L^{p}(\mathbb{R}^{d})$, we have  {that}
\begin{eqnarray} 
\int_{\mathbb{R}^{d}}h_{p}(f(\mathbf{r}),g(\mathbf{r}))\du\mathbf{r}\leq H_{p}(f,g) .
\end{eqnarray}

\item \label{arashconvex}  For $f,g \in L^p \cap L^q$ and $\lambda \in [0,1]$,
\begin{eqnarray} 
H_{\lambda p + (1-\lambda) q } (f,g) \leq \sqrt{\lambda} H_p(f,g) + \sqrt{1-\lambda} H_q(f,g) .
\end{eqnarray}
\end{enumerate}
\end{lemma}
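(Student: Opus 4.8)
The plan is to handle the three parts in turn, each reducing to a classical elementary or integral inequality. For part (i), I would first square both claimed inequalities (all quantities are non-negative), which collapses them onto the single elementary estimate $|x\pm y|^p \le \max(1,2^{p-1})\,(|x|^p+|y|^p)$. For the first inequality I use $|x-y|\le |x|+|y|$ directly; for the second I note that $|x^2-y^2|^{p/2}=|x-y|^{p/2}|x+y|^{p/2}$, so after squaring the $|x-y|^p$ factor cancels against the one inside $h_p$, leaving $|x+y|^p \le \max(1,2^{p-1})(|x|^p+|y|^p)$. The elementary estimate itself I would prove by splitting into two regimes: for $0<p\le 1$ the map $t\mapsto t^p$ is subadditive, giving $(|x|+|y|)^p\le |x|^p+|y|^p$ (constant $1$); for $p\ge 1$ convexity of $t\mapsto t^p$ gives $(|x|+|y|)^p\le 2^{p-1}(|x|^p+|y|^p)$. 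This matches the constant, since $\max(1,2^{(p-1)/2})^2=\max(1,2^{p-1})$.

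For part (ii), the key observation is that the integrand factors as $h_p(f(\mathbf{r}),g(\mathbf{r}))=\sqrt{|f(\mathbf{r})|^p+|g(\mathbf{r})|^p}\,\sqrt{|f(\mathbf{r})-g(\mathbf{r})|^p}$, so the claim is exactly the Cauchy--Schwarz inequality in $L^2(\mathbb{R}^d)$ applied to these two factors: $\int \sqrt{|f|^p+|g|^p}\,\sqrt{|f-g|^p}\,\du\mathbf{r}\le \big(\int(|f|^p+|g|^p)\big)^{1/2}\big(\int|f-g|^p\big)^{1/2}$, and the right-hand side equals $H_p(f,g)$ by definition. No hypothesis beyond $f,g\in L^p$ is required.

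For part (iii), writing $r=\lambda p+(1-\lambda)q$, I would exploit the pointwise identity $|h(\mathbf{r})|^r=(|h(\mathbf{r})|^p)^\lambda (|h(\mathbf{r})|^q)^{1-\lambda}$ and apply Hölder's inequality with exponents $1/\lambda$ and $1/(1-\lambda)$ to obtain $\|h\|_r^r\le (\|h\|_p^p)^\lambda (\|h\|_q^q)^{1-\lambda}$ for each of $h=f,\,g,\,f-g$. A second, discrete, application of Hölder yields the superadditivity $\|f\|_r^r+\|g\|_r^r\le (\|f\|_p^p+\|g\|_p^p)^\lambda (\|f\|_q^q+\|g\|_q^q)^{1-\lambda}$. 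Multiplying these estimates and taking square roots collapses everything into the multiplicative bound $H_r(f,g)\le H_p(f,g)^\lambda H_q(f,g)^{1-\lambda}$. The final step, which is the only non-mechanical one, is to pass from this multiplicative form to the stated additive one: weighted AM--GM gives $u^\lambda v^{1-\lambda}\le \lambda u+(1-\lambda)v$, and since $t\le \sqrt t$ on $[0,1]$ we have $\lambda u+(1-\lambda)v\le \sqrt\lambda\,u+\sqrt{1-\lambda}\,v$, so that $H_r\le \sqrt\lambda\,H_p+\sqrt{1-\lambda}\,H_q$.

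I expect part (iii) to be the main obstacle, not because any single step is difficult but because it requires recognizing that two distinct Hölder estimates (one integral over $\mathbb{R}^d$, one discrete over the two indices $f,g$) chain together into a clean geometric-mean bound, and then that the superficially weaker $\sqrt\lambda$-weighted additive form is a consequence of that geometric-mean bound. Parts (i) and (ii) are essentially bookkeeping around the triangle inequality and Cauchy--Schwarz, respectively.
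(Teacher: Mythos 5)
Your proposal is correct in all three parts. Parts (i) and (ii) are essentially the paper's own argument: the paper likewise reduces (i) to the elementary bound $|x\pm y|^{p}\leq\max(1,2^{p-1})\left(|x|^{p}+|y|^{p}\right)$ (it writes $|x-y|^{p}=\sqrt{|x-y|^{p}}\sqrt{|x-y|^{p}}$ and $|x^{2}-y^{2}|^{p/2}=\sqrt{|x+y|^{p}}\sqrt{|x-y|^{p}}$ and bounds one factor, which is your squaring-and-cancelling step in disguise), and (ii) is the identical Cauchy--Schwarz application. Part (iii) is where you genuinely diverge. The paper tensorizes: it defines $F(\mathbf{r}_{1},\mathbf{r}_{2})=f(\mathbf{r}_{1})(g-f)(\mathbf{r}_{2})$ and $G(\mathbf{r}_{1},\mathbf{r}_{2})=g(\mathbf{r}_{1})(f-g)(\mathbf{r}_{2})$ on $\mathbb{R}^{2d}$, so that $H_{p}(f,g)=\sqrt{\|F\|_{p}^{p}+\|G\|_{p}^{p}}$, applies the pointwise convexity bound $a^{\lambda p+(1-\lambda)q}\leq\lambda a^{p}+(1-\lambda)a^{q}$ under the integral, and then splits the square root by subadditivity to land directly on $\sqrt{\lambda}H_{p}+\sqrt{1-\lambda}H_{q}$, with no H\"older inequality anywhere. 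You instead chain two H\"older estimates (the integral interpolation $\|h\|_{r}^{r}\leq(\|h\|_{p}^{p})^{\lambda}(\|h\|_{q}^{q})^{1-\lambda}$ for $h=f,g,f-g$, and its discrete two-term analogue for the sum $\|f\|_{r}^{r}+\|g\|_{r}^{r}$) to obtain the multiplicative bound $H_{r}(f,g)\leq H_{p}(f,g)^{\lambda}H_{q}(f,g)^{1-\lambda}$, then relax via weighted AM--GM and $\lambda\leq\sqrt{\lambda}$ on $[0,1]$. The two routes are close cousins---your pointwise AM--GM is exactly the convexity fact the paper integrates---but yours buys a strictly sharper intermediate statement, namely the log-convexity of $\lambda\mapsto H_{\lambda p+(1-\lambda)q}(f,g)$, which implies the lemma's additive form; the paper's tensorization trick is shorter and reaches the stated inequality in one pass without invoking any interpolation machinery.
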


\begin{proof} {For} $p\geq1$, {it follows from Jensen's inequality that} $|x{\pm}y|^{p}\leq2^{p-1}\left(|x|^{p}+|y|^{p}\right)$. Moreover, for $0<p<1$, we have {that} $|x{\pm}y|^{p}\leq|x|^{p}+|y|^{p}$. Consequently,
\begin{eqnarray*}
|x-y|^{p} & = & \sqrt{|x-y|^{p}}\sqrt{|x-y|^{p}}\\
 & \leq & \max(1,2^{\frac{p-1}{2}})h_{p}(x,y)
\end{eqnarray*}
and
\begin{eqnarray*}
|x^{2}-y^{2}|^{p/2} & = & \sqrt{|x+y|^{p}}\sqrt{|x-y|^{p}}\\
 & \leq & \max(1,2^{\frac{p-1}{2}})h_{p}(x,y).
\end{eqnarray*}
Let now $f,g\in L^{p}(\mathbb{R}^{d})$. By invoking the Cauchy-Schwartz {inequality}, we {can verify that}
\begin{eqnarray*}
\int_{\mathbb{R}^{d}}h_{p}(f(\mathbf{r}),g(\mathbf{r}))\du\mathbf{r} & = & \int_{\mathbb{R}^{d}}\sqrt{|f(\mathbf{r})|^{p}+|g(\mathbf{r})|^{p}}\sqrt{|f(\mathbf{r})-g(\mathbf{r})|^{p}}\du\mathbf{r}\\
 & \leq & \sqrt{\int_{\mathbb{R}^{d}}\left(|f(\mathbf{r})|^{p}+|g(\mathbf{r})|^{p}\right)\du\mathbf{r}} ~~ \sqrt{\int_{\mathbb{R}^{d}}|f(\mathbf{r})-g(\mathbf{r})|^{p}\du\mathbf{r}}\\
 & = & H_{p}(f,g).
\end{eqnarray*}

To prove (\ref{arashconvex}),  we define $F(\textbf{r}_1,\textbf{r}_2) = f(\textbf{r}_1)(g-f)(\textbf{r}_2)$ and $G(\textbf{r}_1,\textbf{r}_2)=g(\textbf{r}_1)(f-g)(\textbf{r}_2)$. As a consequence, $H_p(f,g)=\sqrt{\|F\|_p^p+\|G\|_p^p}$.
We now write {that}
\begin{eqnarray*}
H_{\lambda p +(1-\lambda) q} (f,g) & = & \sqrt{\|F\|_{\lambda p +(1-\lambda) q}^{\lambda p +(1-\lambda) q} +\|G\|_{\lambda p +(1-\lambda) q}^{\lambda p +(1-\lambda) q}} \\
& \leq & \sqrt{\lambda \|F\|_p^p + (1-\lambda) \|F\|_q^q +\lambda \|G\|_p^p + (1-\lambda) \|G\|_q^q } \\
& & [\text{using the convexity of }p\mapsto a^p \text{ for }a\geq 0] \\
& \leq & \sqrt{\lambda \left(\|F\|_p^p+\|G\|^p_p\right)} + \sqrt{(1-\lambda)\left(\|F\|_q^q+\|G\|_q^q\right)} \\
& & [\text{{using} the concavity of }\sqrt{.} ]\\
& = &  \sqrt{\lambda} H_p(f,g) + \sqrt{1-\lambda} H_q(f,g).
\end{eqnarray*}
\end{proof}

The {key} step {towards obtaining} continuity bounds is to control the non-Gaussian part $g$ of the L\'evy exponent. 

\begin{lemma}  [Control of $g(\omega)$] \label{mainlemma} {Let $V$ be a L\'evy measure, and define $\mathcal{A}_{sym}=\emptyset$ if $V$ is symmetric and $\mathcal{A}_{sym}=\{1\}$ otherwise. For some $0<p\leq q\leq2$ let $\mathcal{A}=\{p,q\}\cup\mathcal{A}_{sym}$ and set $p_{\min}=\min\mathcal{A}$ and $p_{\max}=\max\mathcal{A}$. Then, if $V\in\mathscr{M}(p_{\min},p_{\max})$, for the function}
\begin{equation}
g(\omega)=\int_{\mathbb{R}\backslash\{0\}}\left(\eu^{\ju\omega a}-1-\ju\omega a \1_{|a|<1}\right)V(\du a),
\end{equation}
there exist constants $\kappa_{1}$ and $\kappa_{2}\geq0$ such that, for all $(\omega_{1},\omega_{2})\in\mathbb{R}^{2}$, 
\begin{equation}
|g(\omega_{2})-g(\omega_{1})|\leq\kappa_{1}h_{p_{\min}}(\omega_{1},\omega_{2})+\kappa_{2}h_{p_{\max}}(\omega_{1},\omega_{2}). 
\end{equation}
\end{lemma}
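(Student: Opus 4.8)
The plan is to bound the increment directly under the integral sign. Writing $g(\omega_2)-g(\omega_1)=\int_{\R\setminus\{0\}}\big(\eu^{\ju\omega_2 a}-\eu^{\ju\omega_1 a}-\ju(\omega_2-\omega_1)a\1_{|a|<1}\big)V(\du a)$ and passing the modulus inside, I would split the domain into the tail $|a|\ge 1$ (where the compensator $\1_{|a|<1}$ is absent) and the core $0<|a|<1$ (where it is present). The guiding principle is to produce, for each region, a pointwise-in-$a$ estimate of the integrand of the shape $C\,h_r(\omega_1,\omega_2)\,|a|^r$; integrating such a bound over the tail with $r=p_{\min}$ yields $C\,\mu_{p_{\min}}^{\infty}(V)\,h_{p_{\min}}(\omega_1,\omega_2)$, and over the core with $r=p_{\max}$ yields $C\,\mu_{p_{\max}}^{0}(V)\,h_{p_{\max}}(\omega_1,\omega_2)$, the two moments being finite precisely because $V\in\mathscr{M}(p_{\min},p_{\max})$. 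This gives the claimed inequality with $\kappa_1=C\mu_{p_{\min}}^{\infty}(V)$ and $\kappa_2=C\mu_{p_{\max}}^{0}(V)$, so everything reduces to the two pointwise integrand bounds.

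I would treat the symmetric and non-symmetric cases separately, since this is exactly what the definition of $\mathcal{A}_{sym}$ reflects. When $V$ is symmetric, $\mathcal{A}_{sym}=\emptyset$, so $p_{\min}=p$ and $p_{\max}=q$; the odd parts of the integrand (including the compensator) integrate to zero against $V$, leaving $g(\omega)=\int(\cos(\omega a)-1)V(\du a)$. Using the product-to-sum identity $\cos(\omega_2 a)-\cos(\omega_1 a)=-2\sin\!\big(\tfrac{(\omega_1+\omega_2)a}{2}\big)\sin\!\big(\tfrac{(\omega_2-\omega_1)a}{2}\big)$ together with $|\sin u|\le|u|^{r/2}$ (valid for $r/2\in[0,1]$, i.e. $r\le 2$) and the elementary $(|\omega_1|+|\omega_2|)^{r/2}\le\max(1,2^{(r-1)/2})\sqrt{|\omega_1|^r+|\omega_2|^r}$, one gets $|\cos(\omega_2 a)-\cos(\omega_1 a)|\le C_r\,h_r(\omega_1,\omega_2)\,|a|^r$ for every $r\in(0,2]$. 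Taking $r=p=p_{\min}$ on the tail and $r=q=p_{\max}$ on the core closes this case; note that the full range up to $r=2$ is available here thanks to the cancellation.

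The non-symmetric case, where $p_{\min}=\min(p,1)\le 1\le\max(q,1)=p_{\max}$, is the real work. On the tail the estimate is easy: $|\eu^{\ju\omega_2 a}-\eu^{\ju\omega_1 a}|=|\eu^{\ju(\omega_2-\omega_1)a}-1|\le 2^{1-p_{\min}}|\omega_2-\omega_1|^{p_{\min}}|a|^{p_{\min}}$ (legitimate because $p_{\min}\le 1$), and the first bound in Lemma \ref{lemmaArash}(\ref{arashxy1}) converts $|\omega_2-\omega_1|^{p_{\min}}$ into $h_{p_{\min}}(\omega_1,\omega_2)$. The obstacle is the compensated core term $\delta_0(a)=\eu^{\ju\omega_2 a}-\eu^{\ju\omega_1 a}-\ju(\omega_2-\omega_1)a$, which needs genuine second-order control producing $|a|^{p_{\max}}$ with $p_{\max}\in[1,2]$. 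I would factor out whichever of $\eu^{\ju\omega_1 a},\eu^{\ju\omega_2 a}$ has the smaller argument modulus, writing, with $\tau=(\omega_2-\omega_1)a$ and $w_\ast=\min(|\omega_1|,|\omega_2|)\,|a|$ the modulus of the retained argument, $\delta_0=\pm\eu^{\ju w_\ast}\big(\eu^{\mp\ju\tau}-1\pm\ju\tau\big)+\ju\tau\big(\eu^{\ju w_\ast}-1\big)$. The first summand is bounded by $C|\tau|^{p_{\max}}=C|\omega_2-\omega_1|^{p_{\max}}|a|^{p_{\max}}$ via the interpolation $|\eu^{\ju w}-1-\ju w|\le C_r|w|^r$ ($r\in[1,2]$) followed again by Lemma \ref{lemmaArash}(\ref{arashxy1}); the second is bounded by $|\tau|\min(2,|w_\ast|)\le 2^{2-p_{\max}}|\omega_2-\omega_1|\,(\min(|\omega_1|,|\omega_2|))^{p_{\max}-1}|a|^{p_{\max}}$ using $\min(2,x)\le 2^{1-s}x^{s}$ with $s=p_{\max}-1$. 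The crucial point, and the reason for retaining the smaller-modulus factor, is the elementary inequality $|\omega_2-\omega_1|\,(\min(|\omega_1|,|\omega_2|))^{p_{\max}-1}\le 2^{1-p_{\max}/2}h_{p_{\max}}(\omega_1,\omega_2)$, which follows from $|\omega_2-\omega_1|\le 2\max(|\omega_1|,|\omega_2|)$, $\min\le\max$, and $0\le 1-p_{\max}/2$; had I instead factored out the larger-modulus exponential, this cross term would blow up when $\omega_1$ and $\omega_2$ have very different magnitudes. Assembling the two summands gives $|\delta_0(a)|\le C\,h_{p_{\max}}(\omega_1,\omega_2)\,|a|^{p_{\max}}$ on the core, which completes the argument.
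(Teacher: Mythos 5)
Your proof is correct, and it shares the paper's global skeleton: split the integral at $|a|=1$, establish pointwise-in-$a$ bounds of the form $C\,h_r(\omega_1,\omega_2)\,|a|^r$ with $r=p_{\min}$ on the tail and $r=p_{\max}$ on the core, and integrate against $V\in\mathscr{M}(p_{\min},p_{\max})$ to get $\kappa_1\propto\mu_{p_{\min}}^{\infty}(V)$ and $\kappa_2\propto\mu_{p_{\max}}^{0}(V)$. Where you genuinely diverge is the elementary machinery. The paper does not branch on symmetry at the top level; it splits $g$ into real and imaginary parts on each region (four pieces), introduces $\Delta=\tfrac{a}{2}(\omega_1-\omega_2)$ and $\Sigma=\tfrac{a}{2}(\omega_1+\omega_2)$, and runs everything through the sum-to-product identities, so the real pieces produce $|\omega_1^2-\omega_2^2|^{r/2}$ (converted to $h_r$ via part (\ref{arashxy1}) of Lemma \ref{lemmaArash}), while the compensated imaginary core piece is handled by the rewriting $\sin\Delta\cos\Sigma-\Delta=(\sin\Delta-\Delta)-2\sin\Delta\sin^{2}(\Sigma/2)$ together with $|x-\sin x|\leq 2|x|^{p_{\max}}$ for $p_{\max}\in[1,2]$; symmetry merely makes the two imaginary pieces vanish. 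Your symmetric case reproduces the paper's cosine estimate in all but notation, but your non-symmetric core is a genuinely different decomposition: the complex factorization $\pm\eu^{\ju w_{*}}(\eu^{\mp\ju\tau}-1\pm\ju\tau)+\ju\tau(\eu^{\ju w_{*}}-1)$ with the interpolated Taylor bound $|\eu^{\ju x}-1-\ju x|\leq C_r|x|^{r}$, $r\in[1,2]$, plus your cross-term inequality $|\omega_2-\omega_1|\,\min(|\omega_1|,|\omega_2|)^{p_{\max}-1}\leq 2^{1-p_{\max}/2}h_{p_{\max}}(\omega_1,\omega_2)$, which I verified and which holds for exactly the reasons you give. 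Your route buys modularity (a single bound for the full complex integrand per region, using portable Taylor-interpolation tools rather than trigonometric identities tailored to sinusoids); the paper's buys uniformity across the symmetric and non-symmetric cases and somewhat more explicit constants. One correction to a side remark only: retaining the smaller-modulus exponential is not actually essential. The same chain $|\omega_2-\omega_1|\leq 2\max(|\omega_1|,|\omega_2|)$ and $\max(|\omega_1|,|\omega_2|)^{p_{\max}/2}\leq\sqrt{|\omega_1|^{p_{\max}}+|\omega_2|^{p_{\max}}}$ bounds the cross term $|\omega_2-\omega_1|\max(|\omega_1|,|\omega_2|)^{p_{\max}-1}$ by the same $2^{1-p_{\max}/2}h_{p_{\max}}(\omega_1,\omega_2)$, so either factorization works; since the inequality you actually invoke is true, this mistaken necessity claim does not affect the validity of your proof.
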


\begin{proof} 
We decompose $\left( g(\omega_{1})-g(\omega_{2}) \right)$ in{to} 4 parts {as}
\begin{align*}
g(\omega_{1})-g(\omega_{2})  = &\phantom{+} \int_{|a|<1}\big(\cos(a\omega_{1})-\cos(a\omega_{2})\big)V(\du a)\\
 & +  \int_{|a|\geq1}\big(\cos(a\omega_{1})-\cos(a\omega_{2})\big)V(\du a)\\
 &  + \ju\int_{|a|<1}\big(\sin(a\omega_{1})-\sin(a\omega_{2})-a(\omega_{1}-\omega_{2})\big)V(\du a)\\
 & +  \ju\int_{|a|\geq1}\big(\sin(a\omega_{1})-\sin(a\omega_{2})\big)V(\du a)\\
  = &\phantom{+}  g_{_{\Re,0}}(\omega_{1,2})+g_{_{\Re,\infty}}(\omega_{1,2})+g_{_{\Im,0}}(\omega_{1,2})+g_{_{\Im,\infty}}(\omega_{1,2}).
\end{align*}

{To simplify the notations} we introduce $\Delta=\frac{a}{2}(\omega_{1}-\omega_{2})$ and $\Sigma=\frac{a}{2}(\omega_{1}+\omega_{2})${. We can write that}
\begin{eqnarray*}
\left\{\begin{array}{lll}
\cos(a\omega_{1})-\cos(a\omega_{2}) & = & -2\sin(\Delta)\sin(\Sigma),\\
\sin(a\omega_{1})-\sin(a\omega_{2}) & = & 2\sin(\Delta)\cos(\Sigma).
\end{array}\right.
\end{eqnarray*}

\begin{enumerate} 

\item We start with $g_{_{\Re,0}}$ and use the fact that $|\sin x|\leq \min(1,|x|)\leq |x|^{\frac{p_{\max}}{2}}$, because $p_{\max}\leq2$.
\begin{eqnarray*}
|g_{_{\Re,0}}(\omega_{1,2})| & \leq & 2\int_{|a|<1}|\sin(\Delta)\sin(\Sigma)|V(\du a)\\
 & \leq & 2\int_{|a|<1}|\Delta\Sigma|^{\frac{p_{\max}}{2}}V(\du a)\\
 & \text{=} & 2^{1-p_{\max}} \, \mu_{p_{\max}}^{0}(V) \, |\omega_{1}^{2}-\omega_{2}^{2}|^{\frac{p_{\max}}{2}}\\
 & \leq & \max\left(2^{1-p_{\max}},2^{\frac{1-p_{\max}}{2}}\right) \, \mu_{p_{\max}}^{0}(V) \, h_{p_{\max}}(\omega_{1},\omega_{2}),
\end{eqnarray*}
{where} we used part (\ref{arashxy1}) of {L}emma \ref{lemmaArash} {with} $p=p_{\max}$ {for the last inequality}.

\item For $g_{_{\Re,\infty}}$, we use  $|\sin x|\leq|x|^{\frac{p_{\min}}{2}}$
and {part (\ref{arashxy1}) of L}emma \ref{lemmaArash}  with $p=p_{\min}$ to obtain
\begin{eqnarray*}
|g_{_{\Re,\infty}}(\omega_{1,2})| & \leq & 2\int_{|a|\geq1}|\sin(\Delta)\sin(\Sigma)|V(\du a)\\
 & \leq & 2\int_{|a|\geq1}|\Delta\Sigma|^{\frac{p_{\min}}{2}}V(\du a)\\
 & \text{=} & 2^{1-p_{\min}} \, \mu_{p_{\min}}^{\infty}(V) \, |\omega_{1}^{2}-\omega_{2}^{2}|^{\frac{p_{\min}}{2}}\\
 & \leq & 2^{\frac{1-p_{\min}}{2}}\max\left(1,2^{\frac{1-p_{\min}}{2}}\right) \, \mu_{p_{\min}}^{\infty}(V) \, h_{p_{\min}}(\omega_{1},\omega_{2}).
\end{eqnarray*}

\item If $V$ is symmetric, then $g_{_{\Im,0}}=0$ and we do not need any bounds. For {a}symmetric cases, we know that $p_{\max}=\max(q,1)$. {Here, w}e use the inequality $|x-\sin x|\leq2|x|^{p_{\max}} \in [1,2]$ {for $1\leq p_{\max} \leq2$}. Indeed, $|x-\sin(x)|=|x|(1-\text{sinc}(x))\leq|x|\min(2,|x|)\leq2 |x| \times |x|^{p_{\max}-1}$. {By recalling} $\cos x= \left( 1-2\sin^{2}\left(\frac{x}{2}\right) \right)$, we have {that}
\begin{eqnarray*}
|g_{_{\Im,0}}(\omega_{1,2})| & = & 2\left\lvert \int_{|a|<1}\left(\sin(\Delta)\cos(\Sigma)-\Delta\right)V(\du a)\right\rvert \\
 & = & 2\left\lvert \int_{|a|<1}\left(\Delta-\sin(\Delta)+2\sin(\Delta)\sin^{2}\left(\frac{\Sigma}{2}\right)\right)V(\du a)\right\rvert \\
 & \leq & 2\int_{|a|<1}\left(2|\Delta|^{p_{\max}}+2|\Delta|^{\frac{p_{\max}}{2}}|\Sigma/2|^{\frac{p_{\max}}{2}}\right)V(\du a)\\
 & = & 4\mu_{p_{\max}}^{0}(V)\left(|\omega_{1}-\omega_{2}|^{p_{\max}}+|\omega_{1}^{2}-\omega_{2}^{2}|^{\frac{p_{\max}}{2}}\right)\\
 & \leq & 2^{\frac{p_{\max}+5}{2}} \, \mu_{p_{\max}}^{0}(V) \, h_{p_{\max}}(\omega_{1},\omega_{2}),
\end{eqnarray*}
where we used {part (\ref{arashxy1}) of L}emma \ref{lemmaArash}  for $p=p_{\max} \geq1$.

\item Again, if $V$ is symmetric, $g_{_{\Im,\infty}}=0$. {The construction of $\mathcal{A}$ f}or {a}symmetric cases {implies that} $p_{\min}=\min(p,1)$. By using the inequality $|\sin x|\leq|x|^{p_{\min}}$ and {part (\ref{arashxy1}) of  L}emma \ref{lemmaArash}, we get {that}
\begin{eqnarray*}
|g_{_{\Im,\infty}}(\omega_{1,2})| & \leq & 2\int_{|a|\geq1}|\sin(\Delta)\cos(\Sigma)|V(\du a)\\
 & \leq & 2\int_{|a|\geq1}|\Delta|^{p_{\min}}V(\du a)\\
 & = & 2\mu_{p_{\min}}^{\infty}(V) \, |\omega_{1}-\omega_{2}|^{p_{\min}}\\
 & \leq & 2\mu_{p_{\min}}^{\infty}(V) \, h_{p_{\min}}(\omega_{1},\omega_{2}).
\end{eqnarray*}

\end{enumerate}
We now just have to sum the four bounds to get the result.

\end{proof}

\begin{cor}\label{corgconti}
Under the same assumptions of Lemma \ref{mainlemma}, 
$$|g(\omega)| \leq\kappa_{1}|\omega|^{p_{\min}}+\kappa_{2}|\omega|^{p_{\max}}.$$
\end{cor}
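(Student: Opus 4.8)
The plan is to derive the pointwise bound on $|g(\omega)|$ directly as a special case of the two-point estimate already proved in Lemma \ref{mainlemma}. The key observation is that $g(0)=0$: indeed, substituting $\omega=0$ into the defining integral gives $\eu^{\ju\cdot 0 \cdot a}-1-\ju\cdot 0\cdot a\1_{|a|<1}=1-1-0=0$ for every $a$, so the integrand vanishes identically and $g(0)=0$. Thus $|g(\omega)|=|g(\omega)-g(0)|$, and we are precisely in the situation covered by Lemma \ref{mainlemma} with the choice $\omega_1=0$ and $\omega_2=\omega$.

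First I would apply Lemma \ref{mainlemma} with $(\omega_1,\omega_2)=(0,\omega)$, which yields
\begin{equation*}
|g(\omega)|=|g(\omega)-g(0)|\leq \kappa_1 h_{p_{\min}}(0,\omega)+\kappa_2 h_{p_{\max}}(0,\omega),
\end{equation*}
with the \emph{same} constants $\kappa_1,\kappa_2\geq 0$ furnished by the lemma. The remaining step is then to evaluate $h_p(0,\omega)$ explicitly. From the definition $h_p(x,y)=\sqrt{(|x|^p+|y|^p)|x-y|^p}$, setting $x=0$ and $y=\omega$ gives
\begin{equation*}
h_p(0,\omega)=\sqrt{(0+|\omega|^p)\,|\omega|^p}=\sqrt{|\omega|^{2p}}=|\omega|^p.
\end{equation*}
Substituting this identity for $p=p_{\min}$ and $p=p_{\max}$ into the previous display immediately produces the claimed inequality $|g(\omega)|\leq \kappa_1|\omega|^{p_{\min}}+\kappa_2|\omega|^{p_{\max}}$.

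There is essentially no obstacle here: the corollary is a routine specialization and the only two facts needed are the vanishing $g(0)=0$ and the trivial evaluation $h_p(0,\omega)=|\omega|^p$, both of which are immediate from the definitions. The mild point worth stating explicitly is that the hypotheses of Lemma \ref{mainlemma}, namely that $V$ is a L\'evy measure with $V\in\mathscr{M}(p_{\min},p_{\max})$ for the appropriate $p_{\min},p_{\max}$ determined by the symmetry of $V$ and the choice of $p,q$, are inherited verbatim through the phrase ``under the same assumptions,'' so no new conditions need to be verified. I would therefore keep the proof to the two displays above, emphasizing that the constants are exactly those of the lemma so that the bound is uniform in $\omega$.
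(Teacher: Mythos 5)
Your proof is correct and follows exactly the paper's own route: the paper also obtains the corollary by setting $(\omega_{1},\omega_{2})=(\omega,0)$ in Lemma \ref{mainlemma}, merely calling the result obvious. You have simply made explicit the two elementary facts left implicit there, namely $g(0)=0$ and $h_{p}(0,\omega)=|\omega|^{p}$.
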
 

\begin{proof}
The result is obvious by setting $(\omega_{1},\omega_{2})=(\omega,0)$ in Lemma \ref{mainlemma}.
\end{proof}

{W}e now focus on {bounding the} generalized L\'evy exponent $G(\varphi)=\int_{\mathbb{R}^{d}}g(\varphi(\mathbf{r}))\du \mathbf{r} $ with {no} Gaussian part. 

\begin{lemma}[Control of $G(\varphi)$]\label{controlG} 
{Let $V$ be a L\'evy measure and define $\mathcal{A}_{sym}=\emptyset$ if $V$ is symmetric and $\mathcal{A}_{sym}=\{1\}$ otherwise. For some $0<p\leq q\leq2$ let $\mathcal{A}=\{p,q\}\cup\mathcal{A}_{sym}$ and set $p_{\min}=\min\mathcal{A}$ and $p_{\max}=\max\mathcal{A}$. Then, if $V\in\mathscr{M}(p_{\min},p_{\max})$, the functional}
\begin{equation}
 G(\varphi)=\int_{\mathbb{R}^{d}}g(\varphi(\mathbf{r}))\du\mathbf{r} \end{equation}
 is {well-}defined on $L^{p_{\min}}\cap L^{p_{\max}}$ and there exist $\kappa_{1},\kappa_{2}\geq0$ such that, for {all} $\varphi$ and $\psi\in L^{p_{\min}}\cap L^{p_{\max}}$,
\begin{equation}
|G(\varphi)-G(\text{\ensuremath{\psi})}|\le\kappa_{1}H_{p_{\min}}(\varphi,\psi)+\kappa_{2}H_{p_{\max}}(\varphi,\psi).
\end{equation}

\end{lemma}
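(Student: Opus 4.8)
The plan is to reduce everything to the pointwise estimates already established for the one-dimensional L\'evy exponent $g$ and then to integrate them over $\R^d$ using part (\ref{arashfg}) of Lemma \ref{lemmaArash}. Concretely, Corollary \ref{corgconti} and Lemma \ref{mainlemma} supply the two ingredients I need: a bound on $|g(\omega)|$ controlling the size of $g$, and a bound on the increment $|g(\omega_2)-g(\omega_1)|$ controlling its oscillation. Since the hypotheses of Lemma \ref{controlG} are literally the hypotheses under which those two results hold, both are available with the \emph{same} constants $\kappa_1,\kappa_2\geq 0$, and the whole proof is a matter of transferring them from the scalar level to the functional level.

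First I would settle well-definedness. For $\varphi\in L^{p_{\min}}\cap L^{p_{\max}}$, applying Corollary \ref{corgconti} pointwise gives $|g(\varphi(\r))|\leq \kappa_1|\varphi(\r)|^{p_{\min}}+\kappa_2|\varphi(\r)|^{p_{\max}}$, and integrating over $\R^d$ yields
\begin{equation*}
\int_{\R^d}|g(\varphi(\r))|\,\du\r \leq \kappa_1\|\varphi\|_{p_{\min}}^{p_{\min}}+\kappa_2\|\varphi\|_{p_{\max}}^{p_{\max}}<+\infty,
\end{equation*}
so that $g(\varphi(\cdot))\in L^1(\R^d)$ and $G(\varphi)$ is a finite complex number. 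This shows $G$ is well-defined on $L^{p_{\min}}\cap L^{p_{\max}}$, and in particular $G(\varphi)-G(\psi)$ is an honest difference of finite quantities.

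Next I would establish the increment bound. Writing $G(\varphi)-G(\psi)=\int_{\R^d}\bigl(g(\varphi(\r))-g(\psi(\r))\bigr)\,\du\r$ and applying the triangle inequality for integrals, I would invoke Lemma \ref{mainlemma} at each point $\r$ with $\omega_1=\psi(\r)$ and $\omega_2=\varphi(\r)$ to obtain
\begin{equation*}
|G(\varphi)-G(\psi)|\leq \kappa_1\int_{\R^d}h_{p_{\min}}(\varphi(\r),\psi(\r))\,\du\r + \kappa_2\int_{\R^d}h_{p_{\max}}(\varphi(\r),\psi(\r))\,\du\r.
\end{equation*}
Finally, part (\ref{arashfg}) of Lemma \ref{lemmaArash} bounds each integral: $\int_{\R^d}h_{p}(\varphi(\r),\psi(\r))\,\du\r\leq H_{p}(\varphi,\psi)$ for $p=p_{\min}$ and $p=p_{\max}$, which are finite precisely because $\varphi,\psi\in L^{p_{\min}}\cap L^{p_{\max}}$. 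Combining the two gives the claimed $|G(\varphi)-G(\psi)|\le\kappa_{1}H_{p_{\min}}(\varphi,\psi)+\kappa_{2}H_{p_{\max}}(\varphi,\psi)$.

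This argument is essentially mechanical, so I do not expect a genuine obstacle; the only point demanding a moment of care is the \emph{ordering} of the two halves. The well-definedness via Corollary \ref{corgconti} must be dispatched first, since it is what guarantees that $G(\varphi)$ and $G(\psi)$ are finite and hence that their difference, and the interchange of $|\cdot|$ with $\int$, are legitimate. Once integrability is in hand, the transfer of the scalar increment estimate of Lemma \ref{mainlemma} through the $L^p$-aggregation inequality of Lemma \ref{lemmaArash}(\ref{arashfg}) is immediate, and the constants carry over verbatim.
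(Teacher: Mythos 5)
Your proposal is correct and follows essentially the same route as the paper's proof: well-definedness via the pointwise bound of Corollary \ref{corgconti} integrated over $\R^d$, then the increment estimate obtained by applying Lemma \ref{mainlemma} pointwise and aggregating with part (\ref{arashfg}) of Lemma \ref{lemmaArash}. Your version is in fact slightly more careful than the paper's, which contains a typographical slip (writing $p_{\max}$ where $p_{\min}$ is intended in the final chain of inequalities) that your write-up avoids.
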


\begin{proof} {w}e use Corollary \ref{corgconti} to prove that $G$ is well-defined. Indeed,
\[
\int_{\mathbb{R}^{d}}|g(\varphi(\mathbf{r}))|\du\mathbf{r}\leq\kappa_{1}\|\varphi\|_{p_{\min}}^{p_{\min}}+\kappa_{2}\|\varphi\|_{p_{\max}}^{p_{\max}}<+\infty,
\]
which proves that $g(\varphi) \in L^1$. Then, we apply {L}emmas \ref{lemmaArash} and \ref{mainlemma} 
 to {conclude that}
\begin{eqnarray*}
|G(\varphi)-G(\text{\ensuremath{\psi})}| & \leq & \int_{\mathbb{R}^{d}}|g(\varphi(\mathbf{r}))-g(\psi(\mathbf{r}))|\du\mathbf{r}\\
 & \leq & \kappa_{1}\int_{\mathbb{R}^{d}}h_{p_{\max}}(\varphi(\mathbf{r}),\psi(\mathbf{r}))\du\mathbf{r} +\kappa_{2}\int_{\mathbb{R}^{d}}h_{p_{\max}}(\varphi(\mathbf{r}),\psi(\mathbf{r}))\du\mathbf{r}\\
 & \leq & \kappa_{1}H_{p_{\max}}(\varphi,\psi)+\kappa_{2}H_{p_{\max}}(\varphi,\psi).
\end{eqnarray*}
\end{proof}

By {including} the Gaussian part, we now give the {continuity} condition {in its general form} for the characteristic functional of innovation processes.

\begin{prop}[Continuity of the characteristic functional]\label{mainprop}   
Let $f$ be a L\'evy exponent with triplet $(\mu,\sigma^{2},V)$ {and let }
$0<p\leq q\leq2$. We define
\begin{itemize}
\item \emph{$\mathcal{A}_{sym}=\emptyset$ if $V$ is symmetric and $\{1\}$
otherwise, $ $}
\item $\mathcal{A}_{1}=\emptyset$ if $\mu=0$ and $\{1\}$ otherwise,\emph{$ $}
\item $\mathcal{A}_{2}=\emptyset$ if $\sigma^2=0$ and $\{2\}$ otherwise,
\item $\mathcal{A}=\{p,q\}\cup\mathcal{A}_{sym}\cup\mathcal{A}_{1}\cup\mathcal{A}_{2}$,
\item $p_{\min}=\min\mathcal{A}$ and $p_{\max}=\max\mathcal{A}$.
\end{itemize}
{If $V\in\mathscr{M}(p_{\min},p_{\max})$, t}hen, the generalized L\'evy exponent $ F(\varphi)=\int_{\mathbb{R}^{d}}f(\varphi(\mathbf{r}))\du\mathbf{r} $ is {well-}defined on $L^{p_{\min}}\cap L^{p_{\max}}$ and  there exist $\nu_{1},\nu_{2}\geq0$ such that, for {all} $\varphi$ and $\psi\in L^{p_{\min}}\cap L^{p_{\max}}$, we have {that}
\begin{equation}
|F(\varphi)-F(\text{\ensuremath{\psi})}|\le{\nu}_{1}H_{p_{\min}}(\varphi,\psi)+{\nu}_{2}H_{p_{\max}}(\varphi,\psi).
\end{equation} 
This implies that  $F(\varphi)$ is continuous {over} $L^{p_{\min}}\cap L^{p_{\max}}$.
 \end{prop}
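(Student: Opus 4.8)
The plan is to split the Lévy exponent into its drift, Gaussian, and jump parts, writing $f(\omega)=\ju\mu\omega-\frac{\sigma^{2}}{2}\omega^{2}+g(\omega)$ with $g$ as in Lemma \ref{mainlemma}, so that $F(\varphi)=\ju\mu\int_{\R^d}\varphi(\r)\du\r-\frac{\sigma^{2}}{2}\int_{\R^d}\varphi(\r)^{2}\du\r+G(\varphi)$. I would then bound each of the three pieces separately in terms of $H_{p_{\min}}$ and $H_{p_{\max}}$ and add the estimates. The whole purpose of the auxiliary index sets $\mathcal{A}_{1}$, $\mathcal{A}_{2}$, and $\mathcal{A}_{sym}$ is to place the exponents $1$ and $2$ correctly: if $\mu\neq0$ then $1\in\mathcal{A}$, hence $p_{\min}\leq1\leq p_{\max}$; and if $\sigma^{2}\neq0$ then $2\in\mathcal{A}$, hence $p_{\max}=2$.

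For the jump part I would apply Lemma \ref{controlG} \emph{with its free parameters taken to be the proposition's own} $p_{\min}$ and $p_{\max}$. This is the one bookkeeping point that deserves care. In the symmetric case $\mathcal{A}_{sym}=\emptyset$, while in the asymmetric case the inclusion $\mathcal{A}_{sym}=\{1\}\subseteq\mathcal{A}$ already forces $p_{\min}\leq1\leq p_{\max}$, so adjoining $\mathcal{A}_{sym}$ inside Lemma \ref{controlG} leaves the extreme exponents unchanged. Therefore the pair $(p_{\min},p_{\max})$ generated by Lemma \ref{controlG} coincides with that of the proposition, and the hypothesis $V\in\mathscr{M}(p_{\min},p_{\max})$ is exactly what the lemma requires. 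This yields both the well-definedness of $G$ on $L^{p_{\min}}\cap L^{p_{\max}}$ and the bound $|G(\varphi)-G(\psi)|\leq\kappa_{1}H_{p_{\min}}(\varphi,\psi)+\kappa_{2}H_{p_{\max}}(\varphi,\psi)$.

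For the drift part I would estimate $|\mu|\,\bigl|\int_{\R^d}(\varphi-\psi)\bigr|\leq|\mu|\,\|\varphi-\psi\|_{1}\leq|\mu|\,H_{1}(\varphi,\psi)$, using part (\ref{arashxy1}) of Lemma \ref{lemmaArash} with $p=1$ followed by part (\ref{arashfg}); since $p_{\min}\leq1\leq p_{\max}$, I would write $1=\lambda p_{\min}+(1-\lambda)p_{\max}$ and invoke the interpolation inequality (\ref{arashconvex}) to replace $H_{1}$ by $\sqrt{\lambda}\,H_{p_{\min}}+\sqrt{1-\lambda}\,H_{p_{\max}}$. For the Gaussian part I would estimate $\frac{\sigma^{2}}{2}\bigl|\int_{\R^d}(\varphi^{2}-\psi^{2})\bigr|\leq\frac{\sigma^{2}}{2}\int_{\R^d}|\varphi^{2}-\psi^{2}|$ and apply part (\ref{arashxy1}) of Lemma \ref{lemmaArash} with $p=2$ together with part (\ref{arashfg}) to bound this by $\frac{\sigma^{2}}{\sqrt{2}}H_{2}(\varphi,\psi)$; here $p_{\max}=2$ makes this already an $H_{p_{\max}}$ term, so no interpolation is needed. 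Well-definedness of $F$ then follows from Corollary \ref{corgconti} combined with the finiteness of $\|\varphi\|_{1}$ (when $\mu\neq0$) and $\|\varphi\|_{2}$ (when $\sigma^{2}\neq0$), each guaranteed by the placement of $1$ and $2$ inside $[p_{\min},p_{\max}]$.

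Summing the three estimates produces $|F(\varphi)-F(\psi)|\leq\nu_{1}H_{p_{\min}}(\varphi,\psi)+\nu_{2}H_{p_{\max}}(\varphi,\psi)$ for suitable constants $\nu_{1},\nu_{2}\geq0$. Continuity over $L^{p_{\min}}\cap L^{p_{\max}}$ is then immediate, because $H_{p}(\varphi,\psi)\to0$ whenever $\psi\to\varphi$ in that space (the factor $\|\varphi-\psi\|_{p}^{p}$ vanishes while $\|\psi\|_{p}^{p}$ stays bounded). I expect the only genuine subtlety, as opposed to a computational obstacle, to be precisely this matching of the index sets: one must verify that feeding the proposition's $p_{\min},p_{\max}$ back into Lemma \ref{controlG} reproduces the same pair, so that the seemingly weaker assumption $V\in\mathscr{M}(p_{\min},p_{\max})$ is in fact exactly what the jump estimate consumes.
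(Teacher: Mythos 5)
Your proposal is correct and follows essentially the same route as the paper's proof: the same drift/Gaussian/jump decomposition, Lemma \ref{controlG} for the jump term, parts (i)--(iii) of Lemma \ref{lemmaArash} to absorb $H_{1}$ and $H_{2}$ into $H_{p_{\min}}$ and $H_{p_{\max}}$, Corollary \ref{corgconti} for well-definedness, and the observation that $H_{p}(\varphi_{n},\varphi)\rightarrow 0$ for continuity. Your two extra precautions---checking that feeding the proposition's $(p_{\min},p_{\max})$ back into Lemma \ref{controlG} reproduces the same pair, and writing the Gaussian difference correctly as $\frac{\sigma^{2}}{2}\left\lvert\int_{\R^{d}}(\varphi^{2}-\psi^{2})\right\rvert$ where the paper's display has the slip $\frac{\sigma^{2}}{2}\lVert\varphi-\psi\rVert_{2}^{2}$---only make the same argument slightly more careful.
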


\begin{proof}  We use {L}emma \ref{controlG} to {justify that} $G(\varphi)=\int_{\mathbb{R}^{d}}g(\varphi(\mathbf{r}))\du\mathbf{r}$
is {well-}defined {over} $L^{p_{\min}}\cap L^{p_{\max}}$ and there exist $\kappa_{1},\kappa_{2}>0$
such that
$$|G(\varphi)-G(\text{\ensuremath{\psi})}|\le\kappa_{1}H_{p_{\min}}(\varphi,\psi)+\kappa_{2}H_{p_{\max}}(\varphi,\psi), $$
{where $\varphi,\psi\in L^{p_{\min}}\cap L^{p_{\max}}$. }
{Also, the inclusion of $\mathcal{A}_1$ and $\mathcal{A}_2$ in the definition of $\mathcal{A}$ (and therefore $p_{\min}, \ p_{\max}$), imposes bounds on the Gaussian part as} 
\begin{eqnarray*}
|F(\varphi)-F(\psi)| & = & \left\lvert \ju\mu \left( \int(\varphi-\psi) \right) -\frac{\sigma^{2}}{2}\|\varphi-\psi\|_{2}^{2}+G(\varphi)-G(\psi) \right\rvert \\
 & \leq & |\mu|H_{1}(\varphi,\psi)+\frac{\sigma^{2}}{2}H_{2}(\varphi,\psi)+\kappa_{1}H_{p_{\min}}(\varphi,\psi)+\kappa_{2}H_{p_{\max}}(\varphi,\psi)\\
 & \leq & \nu_{1}H_{p_{\min}}(\varphi,\psi)+\nu_{2}H_{p_{\max}}(\varphi,\psi).
 \end{eqnarray*}
{To validate the last inequality, note that, if  $\mu\neq 0$ ($\sigma\neq 0$), then $1\in[p_{\min},p_{\max}]$ ($2\in[p_{\min},p_{\max}]$). Hence, using Lemma \ref{lemmaArash}, $H_{1}(\varphi,\psi)$ ($H_{2}(\varphi,\psi)$) can be upper-bounded by a linear combination of $H_{p_{\min}}(\varphi,\psi)$ and $H_{p_{\max}}(\varphi,\psi)$.}

Finally, {to ensure the continuity of $F$}, we {point out the fact} that $H_p(\varphi_n,\varphi) \rightarrow 0$ if $\varphi_n \rightarrow \varphi$ in $L^p$.
\end{proof}

\section{Applications for Particular Classes of Operators}\label{sec:Applications}

We use  the previous results, mainly Theorem \ref{maintheo}, for particular classes of differential operators $\mathrm{L}$  (classical or fractional). {For each case, w}e summarize the hypotheses on $w$ required {in our results} to define the sparse process $s$ with $\mathrm{L}s=w$. 

We first {review the necessary steps for demonstrating} the existence of $s$. As mentioned {in Section \ref{subsec:SparseProcess}}, the interpretation of the innovation model $\mathrm{L}s=w$ is based on a characteristic functional of the form
\begin{equation} \label{eq:charac_func}
\CF_s(\varphi) = \exp\left(\int_{\mathbb{R}^{d}}f(\mathrm{L}^{*-1}\varphi(\mathbf{r}))\du\mathbf{r}\right).
\end{equation}
Let $\mathrm{L}$ be a linear operator defined on $\S$ (or a larger space) that {has the} adjoint operator $\mathrm{L}^*$ such that {its adjoint admits a linear} left inverse $\mathrm{L}^{*-1}{: \S \mapsto L^p \cap L^q}$. 
 Then, the characteristic functional of $\mathrm{L} s$ is $\CF_{\mathrm{L} s}(\varphi) = \exp\left(\int_{\mathbb{R}^{d}}f( \mathrm{L}^{*-1}  \mathrm{L} ^* \varphi(\mathbf{r}))\du\mathbf{r}\right) = \exp\left(\int_{\mathbb{R}^{d}}f(  \varphi(\mathbf{r}))\du\mathbf{r}\right)$ In other words, the operator $\mathrm{L} $ whitens the generalized process $s$.

\subsection{Self-Similar Sparse Processes} \label{subsec:SelfSimilar}
 
We are  interested in defining generalized stochastic processes {$s$} such that $\mathrm{L}s = w$, where  $\mathrm{L} = (-\Delta)^{\gamma/2}$ is the fractional Laplacian {operator} of order $\gamma>0$. Such processes  are called second-order self-similar because their correlation structure{s are invariant to} similarity transformations, due to the homogeneity of the fractional Laplacian operator. In the Gaussian case, self-similarity is intimately tied to fractional Brownian motion{s (fBm)}  \cite{Mandelbrot1968}. The link between innovation model{s arising from} fractional Laplacian {operators} and fBm is studied in \cite{TaftiLaplacian}. {This indicates implicitly} that such processes are special cases of the present framework.
{Here, by applying the fundamental results in \cite{Sun-frac}, w}e extend the definition of   {Gaussian} self-similar processes {to the larger class of self-similar processes with infinite-divisible distributions derived from L\'evy noises.} 
 
The fractional Laplacian {operator $(-\Delta )^{\gamma/2}$} is defined for $\varphi \in \S$ by $(-\Delta )^{\gamma/2} \varphi = \mathcal{F}^{-1} \left( \|.\|^{\gamma} \mathcal{F}\varphi \right)$ where $\mathcal{F}$ ($\mathcal{F}^{-1}$, respectively) is the Fourier transform (the inverse Fourier transform, respectively). It is linear, continuous from $\S$ to  $C^\infty$, rotational-, shift-,  scale-invariant, and  self-adjoint. {Thus, we need to find its linear left inverse operator(s).} 
For $0<\gamma<d$, its natural (left and right) inverse is the well-known Riesz potential $I_\gamma$. An extension for $\gamma > d$, $\gamma \notin \mathbb{N}$, called generalized Riesz potential, is {introduced in} \cite{Sun-frac}. The main results concerning such operators {can be found} in Theorem 1.1 {of} \cite{Sun-frac} {which is summarized in Theorem \ref{Sun1}}. 

\begin{theo}[Shift-invariant left inverse of $\FL$] \label{Sun1} 
Let $\gamma>0$ with $(\gamma - d) \notin \mathbb{N}$. The operator $I_\gamma$ with {frequency response} $\|\omega\|^{-\gamma}$ is the unique linear and  continuous operator from $\S$ to $\S'$ that is shift- and scale-invariant{. It is also} a left inverse of the fractional Laplacian {operator} on $S${, which implies that}
\begin{equation}
 {\forall\,\varphi\in\S,~~~ } I_\gamma \FL \varphi = \varphi.
 \end{equation}

\end{theo}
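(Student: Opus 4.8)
The plan is to prove the statement on the Fourier side, where $\FL$ reduces to multiplication by the symbol $\|\omega\|^{\gamma}$, and to treat existence and uniqueness separately. Since a shift-invariant continuous operator $\S\to\S'$ is, by the Schwartz kernel theorem, convolution with some $k\in\S'$, it is equivalently the Fourier multiplier $\varphi\mapsto\mathcal{F}^{-1}\big(m\,\mathcal{F}\varphi\big)$ with symbol $m=\mathcal{F}k\in\S'$. The whole problem thus amounts to determining which symbols $m$ are admissible.

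For existence, I would define $I_\gamma$ through the symbol $\omega\mapsto\|\omega\|^{-\gamma}$. This function is smooth and homogeneous of degree $-\gamma$ on $\R^d\setminus\{0\}$, and the theory of homogeneous distributions guarantees that it extends to a homogeneous tempered distribution on all of $\R^d$, uniquely so precisely when the degree $-\gamma$ avoids the critical values $-d,-d-1,-d-2,\dots$, that is when $(\gamma-d)\notin\N$. With $m=\|\omega\|^{-\gamma}\in\S'$ fixed in this way, the map $\varphi\mapsto\mathcal{F}^{-1}(m\,\mathcal{F}\varphi)$ is well defined (the product of the tempered distribution $m$ with the Schwartz function $\mathcal{F}\varphi$ lies in $\S'$), continuous from $\S$ to $\S'$, shift-invariant because it is a multiplier, and scale-invariant because $m$ is homogeneous of degree $-\gamma$. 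It remains to check the left-inverse identity: on the Fourier side $\mathcal{F}[I_\gamma\FL\varphi]=\|\omega\|^{-\gamma}\big(\|\omega\|^{\gamma}\mathcal{F}\varphi\big)$, which equals $\mathcal{F}\varphi$ away from the origin, so the content of the identity is that no contribution concentrated at the origin survives — this follows from the fact that $\|\omega\|^{\gamma}\mathcal{F}\varphi$ vanishes at the origin to an order matching the singularity of $\|\omega\|^{-\gamma}$.

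For uniqueness, let $T$ be any linear, continuous, shift- and scale-invariant left inverse of $\FL$. By the reduction above it is a Fourier multiplier with symbol $m\in\S'$. The left-inverse condition reads $m(\omega)\,\|\omega\|^{\gamma}\,\mathcal{F}\varphi=\mathcal{F}\varphi$ for every $\varphi\in\S$; since $\mathcal{F}\varphi$ ranges over all of $\S$, the distribution $m\,\|\omega\|^{\gamma}-1$ is annihilated by every Schwartz function and is therefore supported at the origin, which forces $m=\|\omega\|^{-\gamma}$ on $\R^d\setminus\{0\}$. Scale-invariance forces $m$ to be homogeneous of degree $-\gamma$, so $m$ is a homogeneous tempered extension of $\|\omega\|^{-\gamma}$. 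Any two such extensions differ by a distribution that is supported at the origin and homogeneous of degree $-\gamma$, hence by a finite combination of the derivatives $\partial^{\alpha}\delta$ with $|\alpha|=\gamma-d$; the hypothesis $(\gamma-d)\notin\N$ means that no such multi-index exists, so the extension is unique and $T=I_\gamma$.

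The main obstacle is the distribution-theoretic bookkeeping at the origin: giving $\|\omega\|^{-\gamma}$ a precise meaning as a tempered distribution when $\gamma\ge d$, and verifying that the product $\|\omega\|^{-\gamma}\big(\|\omega\|^{\gamma}\mathcal{F}\varphi\big)$ returns $\mathcal{F}\varphi$ with no spurious origin terms. The non-integrality condition $(\gamma-d)\notin\N$ is exactly what eliminates every ambiguity at once — the failure of a unique homogeneous extension and the additive $\partial^{\alpha}\delta$ terms in the uniqueness argument — so both halves of the proof hinge on it. All of this is carried out in Theorem 1.1 of \cite{Sun-frac}, whose construction we follow.
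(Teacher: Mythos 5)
Your proposal is correct and takes essentially the same route as the paper, which states this theorem without an internal proof, presenting it as a summary of Theorem 1.1 of \cite{Sun-frac}: your reconstruction --- reduction of shift-invariant continuous operators $\S\to\S'$ to Fourier multipliers with symbol in $\S'$, existence via the unique homogeneous tempered extension of $\|\omega\|^{-\gamma}$ precisely when $(\gamma-d)\notin\N$, and uniqueness because any two homogeneous extensions differ by combinations of $\partial^{\alpha}\delta$ with $|\alpha|=\gamma-d$, which the hypothesis rules out --- is exactly the argument of the cited source. One minor tightening: to identify $m=\|\omega\|^{-\gamma}$ on $\R^{d}\setminus\{0\}$ it is cleaner to test with $\varphi$ whose Fourier transform vanishes near the origin, since then $\FL\varphi\in\S$ and $T\FL\varphi$ is defined without extending $T$ beyond $\S$; this restricted class already suffices, and the rest of your argument goes through unchanged.
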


{In general, the output range of  operators $I_\gamma$ on $\varphi\in \S$ is not restricted to $\S$ or even to an $L^p$ space. However,  we can confine the range by limiting $\gamma$. More precisely, by considering a generalization of  the Hardy-Littlewood-Sobolev inequality for $1\leq p\leq +\infty$, we can show that}
	\begin{equation}
	I_\gamma (\S) \subset L^p \Leftrightarrow 0< \gamma < d(1-1/p). 
	\end{equation}
Consequently,
\begin{itemize} 
	\item {For} $\gamma <d$, {we have that} $I_\gamma(\S) \subset L^p$ iff. $p>\frac{d}{d-\gamma}>1$.
	\item {For} {$d<\gamma \not \in \mathbb{N}$, we have that} $I_\gamma(\S) \not\subset L^p$ for all $p\geq1$. 
\end{itemize}

Suppose now that $\gamma \notin \mathbb{N}$. Following \cite{Sun-frac}, we are able to define a correction of $I_\gamma$ from $\S$ to $L^p$ for all fixed $p\geq 1$ such that $\left( \gamma - d(1-1/p) \notin \mathbb{N} \right)$. We denote the set of forbidden values of $p$ {by  $A(d,\gamma)$, which is given by}
\begin{eqnarray}
	A(d,\gamma) =	\left\{ \begin{array}{ll}  \left\{ \frac{d}{d+1-\epsilon(\gamma) - k},\, k\in \{ 0,...,\lfloor \gamma \rfloor \} \right\} & \mbox{if } \gamma < d, \phantom{\bigg|}\\
	                  						  \left\{ \frac{d}{d+1-\epsilon(\gamma) - k},\, k\in \{ 0,...,d  \} \right\} & \mbox{if } \gamma > d 	. \phantom{\bigg|}
						\end{array}\right.
\end{eqnarray}

{The set} $A(d,\gamma)$ contains $k(d,\gamma)=\min( \lfloor \gamma \rfloor + 1, d)$ elements. {Therefore}, $[1,+\infty] \backslash A(d,\varphi)$ {is composed of} $k(d,\gamma) +1$ {intervals given by}
\begin{eqnarray}
C(d,\gamma,k)  = \left\{ \begin{array}{ll} 	\left[ 1 , \frac {d}{d- \epsilon(\gamma)} \right) & \text{if } k=0, \\
 								\left( \frac{d} {d - k +1 - \epsilon(\gamma) } , \frac{d}{d- k-\epsilon(\gamma) } \right) & \text{if } k\in \{ 1,...,k(d,\gamma)-1\}, \phantom{\bigg|}\\
								\left( \frac{d} {d-k+1 - \epsilon(\gamma) } , + \infty \right] & \text{if } k= k(d,\gamma). 
				\end{array}\right.
\end{eqnarray}

For instance, if $d=1$, then, $k(1,\gamma)=1$ {and} there is only one forbidden value, with $p=\frac{1}{1-\epsilon(\gamma)}$. {Also, the two intervals are} $C(1,\gamma,0)=\big[1,\frac{1}{1-\epsilon(\gamma)}\big)$ and $C(1,\gamma,1)=\big(\frac{1}{1-\epsilon(\gamma)},+\infty \big]$. {Similarly, f}or $d=2$, there are two {intervals} if $\gamma < 1$ and three otherwise.

We {modify} $I_\gamma$ {to guarantee some} $L^p$ stability, for $k\in \{0,...,k(d,\gamma) \}$, by
 \begin{eqnarray}
  \mathcal{F}( I_{\gamma,k}\{ \varphi \} ) ( \boldsymbol{\omega})= \|  \boldsymbol{\omega} \|^{-\gamma} \left( \mathcal{F}\varphi ( \boldsymbol{\omega})   - \sum_{|\textbf{j}| \leq \lfloor \gamma \rfloor - k } \frac {  \partial^{\textbf{j}}\mathcal{F}\varphi (\textbf{0})}   {\textbf{j}!}  \boldsymbol{\omega} ^{\textbf{j}} 
  \right)  .
  \end{eqnarray}

Note that such operators are {no longer} shift-invariant{. This can be cast as the cost of obtaining an $L^p$-stable operator.} 

\begin{prop}[$L^p$-stable left inverse of $\FL$] \label{d+1}  
Let $d \in \mathbb{N}^*$,  $\gamma \in (0,+\infty) \setminus (\mathbb{N} + d )$, and $k \in \{ 0,...,k(d,\gamma) \}$. 
\begin{itemize}
\item The operator $I_{\gamma,k}$ is  {continuous} linear and scale-invariant   from $S$ to $L^p$, for all $p \in C(d,\gamma,k)$. Moreover, it is a left inverse of the Laplacian operator $\FL$. 

\item For fixed $p\in C(d,\gamma,k)$, $I_{\gamma,k}$ is the unique linear {and} scale-invariant left inverse of $\FL$ from $\S$ to $L^p$.
\item If $p$ and $q$ are in distinct $C(d,\gamma,k)$ sets, then{ the Laplacian operator $(-\Delta)^{\gamma/2}$ has} no linear and scale-invariant left inverse from $\S$ to $L^p \cap L^q$.
\end{itemize}
\end{prop}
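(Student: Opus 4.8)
The plan is to treat the three assertions separately: establish the existence, continuity and left-inverse claim first, deduce uniqueness from a homogeneity argument, and finally reduce the non-existence statement to uniqueness by contradiction. Throughout I work on the Fourier side, where the defining relation reads $\mathcal{F}(I_{\gamma,k}\varphi)(\boldsymbol{\omega}) = \|\boldsymbol{\omega}\|^{-\gamma}\big(\mathcal{F}\varphi(\boldsymbol{\omega}) - P_{m}(\boldsymbol{\omega})\big)$, with $P_{m}$ the Taylor polynomial of $\mathcal{F}\varphi$ at the origin of degree $m = \lfloor\gamma\rfloor - k$.

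For the first bullet, the left-inverse identity is a direct jet computation. Writing $\psi = \FL\varphi$, one has $\mathcal{F}\psi(\boldsymbol{\omega}) = \|\boldsymbol{\omega}\|^{\gamma}\mathcal{F}\varphi(\boldsymbol{\omega})$; since $\gamma \notin \mathbb{N}$, the factor $\|\boldsymbol{\omega}\|^{\gamma}$ vanishes to order $\lfloor\gamma\rfloor$ at the origin, so $\partial^{\mathbf{j}}\mathcal{F}\psi(\mathbf{0}) = 0$ for all $|\mathbf{j}| \leq \lfloor\gamma\rfloor - k$. Hence the correction polynomial in the definition is zero and $\mathcal{F}(I_{\gamma,k}\psi) = \|\boldsymbol{\omega}\|^{-\gamma}\mathcal{F}\psi = \mathcal{F}\varphi$, i.e. $I_{\gamma,k}\FL\varphi = \varphi$. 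Scale-invariance follows because the multiplier $\|\boldsymbol{\omega}\|^{-\gamma}$ is homogeneous of degree $-\gamma$ and the Taylor truncation commutes with dilations. The only substantive point is the $L^p$-continuity: near $\boldsymbol{\omega}=\mathbf{0}$ the subtraction of $P_{m}$ forces $\mathcal{F}(I_{\gamma,k}\varphi) = O(\|\boldsymbol{\omega}\|^{1-k-\epsilon(\gamma)})$, while at infinity the dominant contribution is $-\|\boldsymbol{\omega}\|^{-\gamma}P_{m}$, decaying like $\|\boldsymbol{\omega}\|^{-k-\epsilon(\gamma)}$. These two regimes govern, respectively, the decay at infinity and the regularity in the physical domain; feeding them into the generalized Hardy--Littlewood--Sobolev inequality of \cite{Sun-frac} yields $I_{\gamma,k}\varphi \in L^{p}$ exactly for $p \in C(d,\gamma,k)$, the values $p \in A(d,\gamma)$ being excluded as the borderline cases where one of the two integrals diverges logarithmically.

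For uniqueness, suppose $T\colon\S\to L^{p}$ is another linear, scale-invariant left inverse for some fixed $p\in C(d,\gamma,k)$, and set $D = T - I_{\gamma,k}$, a linear scale-invariant map $\S\to L^{p}$ with $D\FL\varphi = 0$ for all $\varphi\in\S$. The identity $D\FL = 0$ means, on the Fourier side, that $D$ annihilates every transform divisible by $\|\boldsymbol{\omega}\|^{\gamma}$; combined with scale-invariance, this forces $D$ to be concentrated at the origin in frequency, so that $D\varphi$ depends only on the finite jet $\{\partial^{\mathbf{j}}\mathcal{F}\varphi(\mathbf{0}):|\mathbf{j}|\leq\lfloor\gamma\rfloor\}$. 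Homogeneity then pins each jet component to a kernel $k_{\mathbf{j}}$ that is homogeneous of degree $\gamma - |\mathbf{j}| - d$ in $\r$, giving $D\varphi = \sum_{\mathbf{j}}\partial^{\mathbf{j}}\mathcal{F}\varphi(\mathbf{0})\,k_{\mathbf{j}}$. Since a nonzero homogeneous function $\|\r\|^{\beta}$ is never globally $p$-integrable (it fails either at the origin or at infinity), each $k_{\mathbf{j}}$ must vanish, whence $D=0$ and $T = I_{\gamma,k}$.

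Finally, the third bullet follows immediately. If $p$ and $q$ lie in distinct sets $C(d,\gamma,k)$ and $C(d,\gamma,k')$ with $k\neq k'$, then any linear scale-invariant left inverse $T\colon\S\to L^{p}\cap L^{q}$ would in particular be such an inverse into $L^{p}$ and into $L^{q}$; by the uniqueness just proved, $T = I_{\gamma,k}$ and $T = I_{\gamma,k'}$ simultaneously. Since the two generalized Riesz potentials truncate the Taylor expansion at different orders $\lfloor\gamma\rfloor - k \neq \lfloor\gamma\rfloor - k'$, they differ by a nontrivial polynomial correction, so this is a contradiction and no such $T$ exists. I expect the genuine difficulty to lie in the first bullet---namely the careful exponent bookkeeping in the Hardy--Littlewood--Sobolev estimate that produces the exact intervals $C(d,\gamma,k)$ and the forbidden set $A(d,\gamma)$---rather than in the formal uniqueness or reduction arguments, which are essentially algebraic once the homogeneity structure is in hand.
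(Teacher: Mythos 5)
Your proposal is correct in substance, but it is more self-contained than the paper, whose proof of the first two bullets is a two-line reduction: it cites Theorem 1.2 of \cite{Sun-frac} verbatim and only supplies the reindexing identity $p \in C(d,\gamma,k) \Leftrightarrow \lfloor \gamma - d(1-1/p)\rfloor = \lfloor\gamma\rfloor - k$ for $p \in [1,+\infty]\setminus A(d,\gamma)$, which translates Sun--Unser's parametrization by $p$ into the parametrization by $k$; the third bullet is then derived from uniqueness exactly as you do. You instead reconstruct the content of the cited theorem: the jet computation showing the correction polynomial vanishes on the range of $\FL$ (valid since $\|\boldsymbol{\omega}\|^{\gamma}$ vanishes to order $\gamma > \lfloor\gamma\rfloor$ at the origin because $\gamma\notin\mathbb{N}$), and the two-regime asymptotics $O(\|\boldsymbol{\omega}\|^{1-k-\epsilon(\gamma)})$ at zero and $O(\|\boldsymbol{\omega}\|^{-k-\epsilon(\gamma)})$ at infinity, whose exponents do reproduce both endpoints of $C(d,\gamma,k)$ and explain the logarithmic divergences at the forbidden values $A(d,\gamma)$. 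What your version buys is visibility into \emph{why} the intervals arise; what it costs is that the decisive analytic step (the $L^p$-boundedness itself) is still delegated to the Hardy--Littlewood--Sobolev machinery of \cite{Sun-frac}, so the logical dependence is ultimately the same. One soft spot in your uniqueness sketch: the step ``$D$ is concentrated at the origin in frequency, hence depends only on the jet up to order $\lfloor\gamma\rfloor$'' is asserted rather than proved, and the finite truncation is not what the argument actually yields. The clean intermediate statement is that $D = T - I_{\gamma,k}$ annihilates every $\psi\in\S$ whose transform vanishes in a neighborhood of $\mathbf{0}$, since such a $\psi$ equals $\FL\varphi$ with $\mathcal{F}\varphi = \|\cdot\|^{-\gamma}\mathcal{F}\psi \in \mathcal{F}(\S)$; by continuity of $D$ into $L^p$ and density, $D\varphi$ then depends on the \emph{infinite} jet of $\mathcal{F}\varphi$ at $\mathbf{0}$, and testing on $\mathcal{F}\varphi = \boldsymbol{\omega}^{\mathbf{j}}\chi$ with $\chi$ a cutoff equal to $1$ near the origin isolates, via the scaling relation, kernels homogeneous of degree $\gamma - |\mathbf{j}| - d$, which your (correct) observation that no nonzero homogeneous function belongs to $L^p$ kills one by one. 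This repair changes nothing in the conclusion, and your third bullet coincides exactly with the paper's argument: a left inverse into $L^p\cap L^q$ would be one into each space separately, forcing $I_{\gamma,k} = I_{\gamma,k'}$ by uniqueness, which is impossible since the two operators truncate the Taylor expansion at different orders.
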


\begin{proof} The {first two claims} are direct rewritings of Theorem 1.2 in \cite{Sun-frac} {by noting} that, for $p\in [1,+\infty]\setminus A(\gamma)$, 
\begin{equation}
p \in C(d,\gamma,k) ~~ \Leftrightarrow ~~ \left\lfloor \gamma - d\left( 1 - \frac{1}{p} \right) \right\rfloor  = \lfloor \gamma \rfloor - k.
\end{equation}
{The last claim follows from the uniqueness property and states that} the conditions {for restricting the range to} $L^p$ and $L^q$ are incompatible. 
\end{proof}

We are now able to give admissibility conditions between $\gamma$ and a L\'evy exponent $f$ to define processes whitened by fractional Laplacian.

\begin{prop}\label{GammaV}
Let $\gamma \in (0,+\infty) \backslash \mathbb{N}$ and $f$ be a L\'evy exponent with triplet $(\mu, \sigma^2,V)$. {Define} $p_{\min}$ and $p_{\max}$ as in Theorem \ref{maintheo} and {let} $k$ {be} such that $p_{\min}$ and $p_{\max} \in C(d,\gamma,k)$. Then, there exists a generalized stochastic process $s$ with
 \begin{equation}
\widehat{\mathscr{P}} _s(\varphi) = \exp \left( \int_{\R^d} f(I_{\gamma,k} \varphi(\textbf{r})) \mathrm{d}\textbf{r} \right)
\end{equation}
{for $\varphi\in\S$. The process} $s$ is a broadly \emph{self-similar sparse process} (\emph{self-similar Gaussian process}, respectively) if $V{\not \equiv}0$ (if $V{\equiv}0$, respectively). 
\end{prop}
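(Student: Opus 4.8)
The plan is to obtain $s$ as a direct instance of the compatibility criterion of Theorem \ref{maintheo}, taking the generic operator $\mathrm{T}$ there to be the corrected inverse $I_{\gamma,k}$ of the fractional Laplacian. Thus the whole argument reduces to checking that the four hypotheses of Theorem \ref{maintheo} hold for the pair $(f,I_{\gamma,k})$, after which the conclusion—existence and uniqueness of a probability measure $\mathscr{P}_s$ on $\S'$ with $\CF_s(\varphi)=\exp\big(\int_{\R^d}f(I_{\gamma,k}\varphi(\r))\,\du\r\big)$—is read off verbatim.

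First I would dispose of the conditions that involve only the L\'evy exponent $f$ and the exponents $p_{\min},p_{\max}$. Since these are chosen exactly as in Theorem \ref{maintheo}, the requirements $0<p_{\min}\leq p_{\max}\leq 2$, $V\in\mathscr{M}(p_{\min},p_{\max})$, $p_{\min}\leq 1$ whenever $\mu\neq 0$ or $V$ is non-symmetric, and $p_{\max}=2$ whenever $\sigma^2\neq 0$ hold by construction and need no further verification.

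The substantive step is the last hypothesis: that $I_{\gamma,k}$ is linear and continuous from $\S$ to $L^{p_{\min}}\cap L^{p_{\max}}$. For this I would invoke Proposition \ref{d+1}. Its standing assumption $\gamma\in(0,+\infty)\setminus(\N+d)$ follows from our hypothesis $\gamma\notin\N$, because $\N+d\subseteq\N$ forces $\gamma\notin\N+d$. As $k$ is chosen so that \emph{both} $p_{\min}$ and $p_{\max}$ lie in the single interval $C(d,\gamma,k)$, Proposition \ref{d+1} yields that $I_{\gamma,k}$ is linear, continuous, and scale-invariant from $\S$ into $L^{p_{\min}}$ and, separately, into $L^{p_{\max}}$. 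Continuity into each factor gives continuity into the intersection $L^{p_{\min}}\cap L^{p_{\max}}$ endowed with its natural topology (simultaneous convergence in $L^{p_{\min}}$ and $L^{p_{\max}}$, whether normed or metric according to the relative position of the exponents with respect to $1$). Applying Theorem \ref{maintheo} with $\mathrm{T}=I_{\gamma,k}$ then produces the desired $\mathscr{P}_s$.

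It remains to justify the classification, which I would treat as a short remark. The Gaussian/sparse dichotomy is immediate: $V\equiv 0$ forces $f(\omega)=\ju\mu\omega-\tfrac{\sigma^2}{2}\omega^2$, the Gaussian L\'evy exponent, while $V\not\equiv 0$ yields a genuinely non-Gaussian innovation, so that $s$ is sparse in the sense discussed in Section \ref{subsec:SparseProcess}. Self-similarity I would derive from the scale-invariance of $I_{\gamma,k}$ recorded in Proposition \ref{d+1}: rescaling the test function transforms $I_{\gamma,k}\varphi$ by the homogeneity relation of degree $\gamma$, which in turn gives a scaling identity for $\CF_s$ and hence for every finite-dimensional marginal of $s$. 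I expect the only delicate point of the whole argument to be precisely this intersection step—one needs a single $k$ serving both exponents, since by the last claim of Proposition \ref{d+1} exponents lying in distinct intervals $C(d,\gamma,k)$ admit no common linear scale-invariant left inverse; the hypothesis ``$p_{\min}$ and $p_{\max}\in C(d,\gamma,k)$'' is exactly what secures it.
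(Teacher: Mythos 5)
Your proposal is correct and takes essentially the same route as the paper's own (one-line) proof: apply Theorem \ref{maintheo} with $\mathrm{T}=I_{\gamma,k}$, whose continuity from $\S$ to $L^{p_{\min}}\cap L^{p_{\max}}$ is exactly what Proposition \ref{d+1} supplies once both exponents lie in a common interval $C(d,\gamma,k)$. Your additional checks (that $\gamma\notin\N$ implies $\gamma\notin\N+d$, that continuity into each $L^p$ factor gives continuity into the intersection, and the scale-invariance remark behind self-similarity) are sound elaborations of the same argument rather than a different approach.
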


{Proposition \ref{GammaV} can be interpreted as follows: there exists a process $s$, such that} $\langle s, \varphi \rangle = \langle w , I_{\gamma,k} \varphi \rangle = \langle I_{\gamma,k}^* w , \varphi \rangle$. In other words, $(-\Delta)^{\gamma/2} s = w$. 

\begin{proof}
We apply Theorem \ref{maintheo} with $\mathrm{T}=I_{\gamma,k}$, which is continuous on $L^{p_{\min}} \cap L^{p_{\max}}$, according to Proposition \ref{d+1}.
\end{proof}

{We examine the construction of the innovation models in Proposition \ref{GammaV} for} L\'evy-Schwartz measures $V$ with finite first{-order} moment $\mu_1^\infty(V) < + \infty$.
\begin{itemize}
\item {\textbf{Gaussian case}.} Suppose that $\left( \gamma - \frac{d}{2} \right) \not\in \mathbb{N}$. Then, there exists $k=\lfloor d/2+1-\epsilon(\gamma)\rfloor$ such that $I_{\gamma,k}$ is continuous from $\S$ to $L^2${. Thus, according to Proposition \ref{GammaV}, the functional $\CF_s(\varphi) =\exp\left( -\frac{\sigma^2\lVert I_{\gamma,k} \varphi \rVert_2^2}{2} \right)$ defines a process $s$ which is} whitened by $\FL$. 

\item {\textbf{Laplace case}.} The L\'evy measure $V_L$ of {a} Laplacian law verifies $V_L \in \mathscr{M}(1,1)$. Let $p_{\min} = p_{\max} = 1$. Proposition \ref{GammaV} applies with  the operator $I_{\gamma,0}$ for $\gamma \notin \mathbb{N}$.

\item {\textbf{Compound Poisson case}.} Suppose that $V \in \mathscr{M}(1,0)$. Then, as in the Laplace case, the operator $I_{\gamma,0}$ is admissible for all $\gamma \notin \mathbb{N}$.

\item {\textbf{S$\alpha$S}.} Let $1\leq \alpha < 2$ and $\gamma \notin \mathbb{N}$ with $\left( \gamma - d\left( 1-1/\alpha\right) \right)  \notin \mathbb{N}$. Then, there exists $k= \lfloor d(1-1/\alpha) +1-\epsilon(\gamma) \rfloor$ such that $I_{\gamma,k}(\S) \subset L^\alpha$. According to Proposition \ref{GammaV}, there exists $s$ with $\CF_s(\varphi) = \exp ( - \lVert I_{\gamma,k} \varphi \rVert_\alpha^\alpha )$ and $\FL s = w$ with $w$ an $\alpha$-stable innovation process.
\end{itemize}

\begin{figure}[tb] 
  \centering
    \includegraphics[width=0.60 \textwidth]{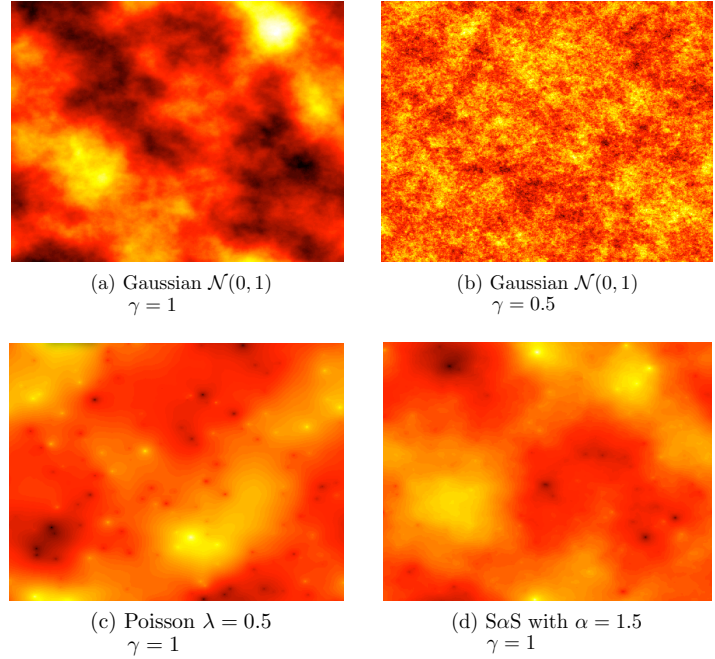}
    \caption{Self-similar processes}
\end{figure} 

We {depict} in Figure 2 some examples of self-similar processes in dimension $d=2$. Dark intensities correspond to the highest values of the simulated process, while  bright ones correspond to the smallest.

\subsection{Directional Sparse Processes} \label{subsec:Directional}

Our goal is to define directional stochastic processes on $\S'$ using oriented differential  operators. {This} consists {of} defining {proper} left inverse operators {for} derivative operators {of the form} $\Der_{\u} \varphi = \langle \bigtriangledown \varphi, \u \rangle = u_1 \Der_1 \varphi + \cdots + u_d \Der_d \varphi$, {where $\u$ stands for the direction. For this purpose, we extend the one-dimensional results of \cite{Unser_etal2011} to higher dimensions.} We start with {first-order operators}  $\mathrm{L}=  \Der_{\u}-\alpha \mathrm{Id}  $ with $\alpha \in \C$ and $\u\in\R^d$.   \\

We denote by $(\e_k)$ the canonical bas{is} of $\R^d$. For $\u \in \R^d \backslash \{\bold{0}\}$, $p_{\u^\perp} (\r) = \r - \frac{\langle \u,\r \rangle}{\lVert \u \rVert_2^2} \u$ is the orthogonal projection on $\u^\perp = \{ \v \, | \, \langle \u,\v\rangle =0 \}$. Recall that $\lVert \r \rVert_2^2 = \langle \u,\r \rangle^2 \lVert \u \rVert_2^2 + \lVert p_{\u^\perp} (\r) \rVert_2^2$. {Since} $\Der_{\u} = \lVert \u \rVert_2 \Der_{\u / \lVert \u \rVert_2}$, we assume now that $\lVert \u \rVert_2=1$, without loss of generality.

\subsubsection{Left Inverse Operators of $\mathrm{L}=\Der_{\u}-\alpha \mathrm{Id}$}

Let $\u \in \R^d$ with norm {$\|\u\|_2=1$. We separately investigate the cases of $\Re (\alpha)\neq 0$ and $\Re (\alpha)= 0$, as they result in stable and marginally stable left inverses, respectively.

\subsubsection*{Left inverse operators in the stable case}
Since the case of $\Re (\alpha)>0$ is very similar to $\Re (\alpha)<0$, we first study the  causal case. Therefore, we assume that $\Re (\alpha)<0$ and w}e define $\rho_{\u,\alpha}$ by
\begin{equation}
\langle \rho_{\u,\alpha} , \varphi \rangle =  \int_0^{+\infty} \eu^{\alpha t} \varphi(t\u)\du t .
\end{equation}
{We further define the operator} $I_{\u,\alpha}$ on $\S$ as $I_{\u,\alpha} = \rho_{\u,\alpha} * \varphi$.  In {one }dimension, $\rho_{\u,\alpha}$ is a causal exponential function {in the classical sense as was introduced} in \cite{Unser_etal2011}{. However,} for $d \geq 2$, it is a generalized function.

\begin{prop}\label{inversepartialdifferential}
The continuous operator $I_{\u,\alpha}$ is LSI {and} continuous from $\S$ to $\S${. Furthermore, it is} the inverse of the partial differential operator $(\Der_{\u} - \alpha \mathrm{Id})$ on $\S${, meaning that}
\begin{equation}
 I_{\u,\alpha}(\Der_{\u}-\alpha \mathrm{Id}) \varphi = (\Der_{\u} - \alpha \mathrm{Id}) I_{\u,\alpha}  \varphi = \varphi {,~~~ \forall\,\varphi\in\S}.
 \end{equation}
\end{prop}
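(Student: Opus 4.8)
The plan is to exploit the fact that $I_{\u,\alpha}=\rho_{\u,\alpha}*\,\cdot\,$ is a convolution operator, so that linearity and shift-invariance require no work, and to push everything else into the Fourier domain. First I would check that $\rho_{\u,\alpha}$ is a genuine element of $\S'$: for $\varphi\in\S$ and $\|\u\|_2=1$ one has $|\varphi(t\u)|\leq C_N(1+t^N)^{-1}$, while $\Re(\alpha)<0$ gives $|\eu^{\alpha t}|=\eu^{t\Re(\alpha)}\leq 1$ on $t\geq0$; hence the defining integral converges absolutely and depends continuously on $\varphi$ through finitely many Schwartz semi-norms. Linearity of $I_{\u,\alpha}$ is immediate, and shift-invariance holds because convolution with a fixed distribution always commutes with translations, so $I_{\u,\alpha}$ is LSI.

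The heart of the argument is to identify the frequency response of $\rho_{\u,\alpha}$ and show it is a multiplier of $\S$. Writing $\langle\widehat{\rho_{\u,\alpha}},\varphi\rangle=\langle\rho_{\u,\alpha},\mathcal{F}\varphi\rangle$ and inserting $(\mathcal{F}\varphi)(t\u)=\int_{\R^d}\varphi(\r)\eu^{-\ju t\langle\u,\r\rangle}\du\r$, I would justify Fubini (the integrand is dominated by $\eu^{t\Re(\alpha)}|\varphi(\r)|$, integrable on $(0,\infty)\times\R^d$ since $\Re(\alpha)<0$) and carry out the $t$-integral to obtain $\widehat{\rho_{\u,\alpha}}(\omega)=\bigl(\ju\langle\u,\omega\rangle-\alpha\bigr)^{-1}$. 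The decisive observation is that, because $\Re(\alpha)<0$, the denominator has real part $-\Re(\alpha)$ and so $|\ju\langle\u,\omega\rangle-\alpha|\geq|\Re(\alpha)|>0$ for every $\omega$; the symbol is therefore $C^\infty$ and, together with all its derivatives, uniformly bounded, which already places it in $\mathcal{O}_M$. Since $I_{\u,\alpha}\varphi=\mathcal{F}^{-1}\bigl(\widehat{\rho_{\u,\alpha}}\cdot\mathcal{F}\varphi\bigr)$, and multiplication by an $\mathcal{O}_M$ function is a continuous self-map of $\S$ while $\mathcal{F}$ and $\mathcal{F}^{-1}$ are automorphisms of $\S$ (Table \ref{table:notations}), this yields continuity $\S\to\S$.

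For the inverse identities I would again argue through symbols: the operator $\Der_{\u}-\alpha\mathrm{Id}$ acts as multiplication by $\ju\langle\u,\omega\rangle-\alpha$ in the Fourier domain, so composing it with $I_{\u,\alpha}$ multiplies the two symbols to give $1$, and because Fourier multipliers commute, both orders produce the identity on $\S$. Equivalently, and more transparently, I would verify directly that $(\Der_{\u}-\alpha\mathrm{Id})\rho_{\u,\alpha}=\delta$: pairing against $\varphi$, using $\tfrac{\du}{\du t}\varphi(t\u)=(\Der_{\u}\varphi)(t\u)$, a single integration by parts along the ray $t\mapsto\varphi(t\u)$ cancels the two $\alpha$-terms and collapses the integral to $\varphi(\mathbf{0})$, the boundary contribution at $t=+\infty$ vanishing since $\Re(\alpha)<0$. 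The two claimed equalities then follow from $\delta*\varphi=\varphi$ together with the commutation of $\Der_{\u}-\alpha\mathrm{Id}$ with convolution.

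The step I expect to be the main obstacle is the $\S\to\S$ continuity, and the difficulty is conceptual rather than computational. In dimension $d\geq2$ the Green's function $\rho_{\u,\alpha}$ is a true distribution concentrated on the half-line $\R_+\u$, not an ordinary kernel, so one cannot estimate the convolution by naive pointwise bounds; the clean route is the Fourier/$\mathcal{O}_M$ argument above. Its only real content is that the symbol $\bigl(\ju\langle\u,\omega\rangle-\alpha\bigr)^{-1}$ fails to decay in directions transverse to $\u$ yet remains a bounded smooth multiplier — precisely what $\Re(\alpha)<0$ secures — and one must be careful to record that boundedness of all derivatives (rather than decay) already suffices for membership in $\mathcal{O}_M$.
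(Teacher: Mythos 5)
Your proposal is correct and follows essentially the same route as the paper's proof: establish $\rho_{\u,\alpha}\in\S'$, identify the symbol $\widehat{\rho_{\u,\alpha}}(\boldsymbol{\omega})=\left(\ju\langle\boldsymbol{\omega},\u\rangle-\alpha\right)^{-1}$, invoke the $\mathcal{O}_M$-multiplier (equivalently $\mathcal{O}'_C$-convolutor) criterion for continuity from $\S$ to $\S$, and deduce both inverse identities from $(\Der_{\u}-\alpha\mathrm{Id})\rho_{\u,\alpha}=\delta$ together with $\delta*\varphi=\varphi$. Your only deviation --- computing the symbol directly via Fubini rather than extracting it from the distributional identity $(\Der_{\u}-\alpha\mathrm{Id})\rho_{\u,\alpha}=\delta$ --- is a minor variation, and you in fact also include the paper's integration-by-parts argument as your ``more transparent'' alternative.
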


\begin{proof} 
First, because $| \langle \rho_{\u,\alpha} ,\varphi \rangle | \leq \|\varphi\|_{\infty} \int_0^{+\infty} | \eu^{\alpha t}| \du t= \lVert \varphi \rVert_{\infty} / (-\Re(\alpha))$, we have {that} $\rho_{\u,\alpha} \in \S'${. This confirms that} $I_{\u,\alpha}$ is well-defined on $\S$. 

The derivative of $\rho_{\u,\alpha}$ in the sense of generalized function{s} is
\begin{eqnarray*}
\langle \Der_{\u} \rho_{\u,\alpha} , \varphi \rangle & = & - \langle \rho_{\u,\alpha} , \Der_{\u} \varphi \rangle \\
& = & - \int_0^{+\infty} \eu^{\alpha t} \{ \Der_{\u} \varphi \} (t\u) \du t \\
 & = & {- \left[ \varphi(t\u) \eu^{\alpha t } \right]_0^{+\infty} + \alpha \langle \rho_{\u,\alpha} , \varphi \rangle }\\
 &    & [\text{using an integration by parts} ]   \\
 & = & {\varphi(0) + \alpha  \langle \rho_{\u,\alpha} , \varphi \rangle, }
\end{eqnarray*}
 meaning that  $(\Der_{\u}  - \alpha \mathrm{Id})  \rho_{\u,\alpha} = \delta$. Consequently, in Fourier domain, we have {that}
\begin{eqnarray*}
1 &  = &  \widehat{\Der_{\u}\rho_{\u,\alpha}} (\boldsymbol{\omega}) - \alpha \widehat{\rho_{\u,\alpha}} (\boldsymbol{\omega})\\
 & = & (\ju \langle \boldsymbol{\omega}, \u \rangle - \alpha) \widehat{\rho_{\u,\alpha}} (\boldsymbol{\omega}).
\end{eqnarray*}
{This implies} $$\widehat{\rho_{\u,\alpha} * \varphi} (\boldsymbol{\omega})= \widehat{\rho_{\u,\alpha}}{(\boldsymbol{\omega})} \widehat{\varphi} (\boldsymbol{\omega})= \frac{\widehat{\varphi}(\boldsymbol{\omega})}{\ju  \langle \boldsymbol{\omega}, \u \rangle - \alpha} .$$
Consequently, $\widehat{\rho_{\u,\alpha} * \varphi}$ and $\rho_{\u,\alpha} * \varphi$ {belong to $\S$}. Moreover, we know that {the} LSI operator $\varphi \mapsto u * \varphi$ for $u\in \S'$, is continuous from $\S$ {in}to itself iff. $u\in\mathcal{O}'_C$ or, equivalently, iff. $\widehat{u} \in \mathcal{O}_M$, the space of slowly increasing and infinitely differentiable functions (see \cite{SchDistri} for more details). {Since in our case} $\widehat{\rho_{\u,\alpha}} \in \mathcal{O}_M$, {we conclude that} $I_{\u,\alpha}$ is continuous. 

{Finally, we can write that}
\begin{eqnarray*}
(\Der_{\u}-\alpha \mathrm{Id})  I_{\u,\alpha} \varphi & = & (\Der_{\u}-\alpha \mathrm{Id})  (\rho_{\u,\alpha} * \varphi) = ((\Der_{\u}-\alpha \mathrm{Id})  \rho_{\u,\alpha}) * \varphi = \delta * \varphi = \varphi, \\
I_{\u,\alpha} (\Der_{\u}-\alpha \mathrm{Id})  \varphi & = & \rho_{\u,\alpha} * ((\Der_{\u}-\alpha \mathrm{Id})  \varphi) = ((\Der_{\u}-\alpha \mathrm{Id})  \rho_{\u,\alpha}) * \varphi  = \varphi .
\end{eqnarray*}

\end{proof}

Following \cite{Unser_etal2011}, we {can transpose} this result for $\Re(\alpha) > 0$ (anti-causal case) {by} defining $\rho_{\u,\alpha} (\textbf{r}) = \rho_{\u, -\alpha} (-\textbf{r})$. {With this choice, we can show in a similar way that Proposition \ref{inversepartialdifferential} also holds for $\Re(\alpha)>0$.}

\subsubsection*{Left inverse operators in the marginally stable case}

Suppose now that $\alpha = \ju\omega_0$ is {purely} imaginary, with $\Re(\alpha) =0$. The natural candidate for $(\Der_{\u} - \ju\omega_0 \mathrm{Id})^{-1}$  is again the convolution {operator defined by the kernel $\rho_{\u,\ju\omega_0}$ where}
$$\langle \rho_{\u,\ju\omega_0} , \varphi \rangle = \int_0^{+\infty} \eu^{{\ju\omega_0} t} \varphi(t\u)\du t. $$
{In other words,}
\begin{equation}
I_{\u,\ju\omega_0}\varphi (\textbf{r}) = \eu^{\ju\omega_0 \langle \r, \u \rangle } \int_{-\infty}^{\langle \r, \u \rangle } \eu^{-\ju\omega_0 \tau} \varphi (p_{\u^\perp}(\r) + \tau \u) \du\tau.
\end{equation}
The adjoint of $I_{\u,\ju\omega_0}$ is given by
\begin{equation}
I_{\u,\ju\omega_0}^*\varphi (\textbf{r}) = (\rho_{\u,\ju\omega_0}(- \cdot  ) * \varphi) (\r) = \eu^{-\ju\omega_0 \langle \r, \u \rangle } \int_{\langle \r, \u \rangle }^{+\infty} \eu^{\ju\omega_0 \tau} \varphi  (p_{\u^\perp}(\r) + \tau \u)  \du\tau .
\end{equation}
{These $I_{\u,\ju\omega}^*$ operators are shift-invariant. However, their impulse responses are not rapidly decreasing (their Fourier transforms are not in $\mathcal{O}_M$). Consequently, they are not stable and cannot define valid characteristic functionals in \eqref{eq:charac_func}. Here, we propose  a modification inspired by \cite{Unser_etal2011} to overcome the instability problem.} We define
\begin{eqnarray}\label{eq:J_def}
J_{\u,\omega_0} \varphi (\textbf{r}) & = & I_{\u,\ju\omega_0} \varphi (\textbf{r}) - \{ I_{\u,\ju\omega_0} \varphi \} (p_{\u^\perp}(\r)) \eu^{\ju\omega_0 \langle \r, \u \rangle }  \label{eq:J-I} \\
& = & \eu^{\ju\omega_0 \langle \r, \u \rangle } \int_{0}^{\langle \r, \u \rangle } \eu^{-\ju\omega_0 \tau} \varphi (p_{\u^\perp}(\r) + \tau \u) \du\tau .
\end{eqnarray}
{The modified operator} $J_{\u,\omega_0}$ is  continuous from $\S$ to $C^\infty$ {and is a} right-inverse of $(\Der_{\u}-{\ju\omega_0 } \mathrm{Id})$. {Indeed, $ \left( I_{\u,j\omega_0} \varphi  - J_{\u,\omega_0} \varphi \right) (\r) = I_{\u,\ju\omega_0} \varphi (p_{\u^\perp}(\r)) \eu^{\ju\omega_0 \langle \r, \u \rangle } \in \mbox{Ker} (\Der_{\u}-\ju \omega_0 \mathrm{Id} )$.} 
 {We claim that its} adjoint is {given by}
\begin{eqnarray}\label{eq:Jadj_def}
J_{\u,\omega_0}^*\varphi (\textbf{r}) & = &I_{\u,\ju\omega_0}^* -  \1_{\langle \r , \u\rangle \leq 0} \eu^{-\ju\omega_0 \langle \r, \u \rangle } \int_{-\infty }^{+ \infty} \eu^{\ju\omega_0 \tau} \varphi  (p_{\u^\perp}(\r) + \tau \u)  \du\tau .  \label{eq:J*-I*}
\end{eqnarray}
We have now to show that 
\begin{equation} \label{eq:adjointrelation} 
 {\forall \varphi,\psi \in\S,~~~ } \langle J_{\u,\omega_0}\varphi \,,\, \psi\rangle = \langle \varphi \,,\, J_{\u,\omega_0}^{*}\psi\rangle.
\end{equation}
  We denote 
  \begin{eqnarray} 
  A(\varphi,\psi) &=&  \langle   I_{\u,\ju\omega_0} \varphi  (p_{\u^\perp}(\r)) \eu^{\ju\omega_0 \langle \r, \u \rangle }   , \ \psi(\r) \rangle \\
  B(\varphi,\psi) & =&  \langle \varphi(\r)\  , \  \1_{\langle \r , \u\rangle \leq 0} \eu^{-\ju\omega_0 \langle \r, \u \rangle }  \int_{-\infty }^{+ \infty} \eu^{\ju\omega_0 \tau} \psi  (p_{\u^\perp}(\r) + \tau \u)  \du\tau  \rangle.
  \end{eqnarray}
   The Equation \eqref{eq:adjointrelation} is equivalent (as we can see from  \eqref{eq:J-I} and  \eqref{eq:J*-I*})  to $A(\varphi,\psi) = B(\varphi,\psi)$. To show this, we denote $(\u_1 = \u, \u_2, \cdots, \u_d)$ an orthonormal basis of $\R^d$. Especially, we have  $p_{\u^\perp}(\r) = \langle \r,\u_2 \rangle \u_2 + \cdots +  \langle \r,\u_d \rangle \u_d$. Then,  
\begin{eqnarray*}
A(\varphi,\psi)  & = & \int_{\R^{d}}\left(  \eu^{\ju \omega_0 \langle \r,\u \rangle} \int_{\R} \1_{\tau \leq 0} \eu^{- \ju \omega_0 \tau} \psi ( p_{\u^\perp}(\r) + \tau \u ) \du\tau  \right)  \psi(\r)  \du \r  \\
&=& \int_{\R^{d+1}}   \1_{\tau\leq 0} \eu^{\ju \omega_0 r_1 } \eu^{- \ju \omega_0 \tau} \varphi (  \tau \u_1 + r_2 \u_2 + \cdots + r_d \u_d )     \psi( r_1 \u_1 + r_2 \u_2 + \cdots + r_d \u_d)  \du\tau \du r_1 \cdots \du r_d \\
& & [\text{because }(\u_1,\cdots,\u_d) \text{ is an orthonormal basis}] \\
&=& \int_{\R^{d+1}}   \1_{r_1'\leq 0} \eu^{\ju \omega_0 \tau' } \eu^{- \ju \omega_0 r_1'} \varphi (  r_1' \u_1 + r_2 \u_2 + \cdots + r_d \u_d )     \psi(\tau' \u_1 + r_2 \u_2 + \cdots + r_d \u_d)  \du\tau' \du r_1' \cdots \du r_d \\
& & [\text{using the change of variables }(r_1,\tau)=(\tau',r_1')] \\
&=& \int_{\R^{d}}  \varphi(\r) \left(  \1_{ \langle \r, \u \rangle \leq 0}   \eu^{- \ju \omega_0 \langle \r, \u \rangle } \int_{\R} \eu^{\ju \omega_0 \tau' }      \psi(\tau' \u +p_{\u^\perp}(\r))  \du\tau' \right)  \du\r \\
&=&  B(\varphi,\psi).
\end{eqnarray*}

{Denote the modulation operators by} $M_{\u,\omega_0} \varphi (\textbf{r}) = \eu^{\ju\omega_0 \langle \r, \u \rangle } \varphi(\textbf{r})${ Then it is not difficult to check that}
\begin{eqnarray}\label{eq:modulation}
I_{\u,j\omega_0} \varphi& = &  M_{\u,\omega_0} I_{\u,0} M_{\u,-\omega_0} \varphi , \nonumber\\
I_{\u,j\omega_0}^* \varphi& = &  M_{\u,-\omega_0} I_{\u,0}^* M_{\u,\omega_0} \varphi , \nonumber\\
J_{\u,\omega_0} \varphi& = &  M_{\u,\omega_0} J_{\u,0} M_{\u,-\omega_0} \varphi ,\nonumber\\
J_{\u,\omega_0}^*\varphi& = &  M_{\u,-\omega_0} J_{\u,0}^* M_{\u,\omega_0} \varphi .
\end{eqnarray}

Note that $J_{\u,\omega_0}$ preserves the regularity{, with} $J_{\u,\omega_0} \varphi \in C^\infty$. On the contrary, its adjoint creates discontinuit{ies along} the hyperplane $\langle \r , \u \rangle = 0${, while it} preserves the decay properties, as we can see in {P}roposition {\ref{controlDecrease}}.

\begin{prop}[{Properties of $J_{\u,\omega_0}^*$}]\label{controlDecrease} {The following properties hold for the adjoint operator $J_{\u,\omega_0}^*$ defined in \eqref{eq:Jadj_def}:}  
\begin{itemize}
\item The adjoint operator $J_{\u,\omega_0}^*$  is continuous from $L^{\infty,\alpha}$ to $L^{\infty,\alpha-1}$ for $\alpha>1$.
\item  The adjoint operator $J_{\u,\omega_0}^*$ is linear {and} continuous from $\mathcal{R}$ {in}to itself{, and it is a} left inverse of the operator {$(\Der_{\u}-\ju\omega_0 \mathrm{Id})^*$} on $\S$. 
\end{itemize}
\end{prop}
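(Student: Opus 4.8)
The plan is to handle the two bullets in turn, reducing both to the unmodulated operator $J_{\u,0}^*$ and then bootstrapping. First I would invoke the modulation identities \eqref{eq:modulation}, namely $J_{\u,\omega_0}^* = M_{\u,-\omega_0}J_{\u,0}^*M_{\u,\omega_0}$. Since $|M_{\u,\pm\omega_0}\psi(\r)| = |\psi(\r)|$ pointwise, each modulation is an isometry of every weighted space $L^{\infty,\alpha}$ and of $\mathcal{R}$, so it suffices to establish both continuity statements for $\omega_0=0$. Writing $s=\langle\r,\u\rangle$ and $\r_\perp=p_{\u^\perp}(\r)$, unwinding \eqref{eq:Jadj_def} shows that $J_{\u,0}^*\varphi(\r)=\int_s^{+\infty}\varphi(\r_\perp+\tau\u)\,\du\tau$ when $s>0$ and $J_{\u,0}^*\varphi(\r)=-\int_{-\infty}^{s}\varphi(\r_\perp+\tau\u)\,\du\tau$ when $s\le 0$, i.e. $J_{\u,0}^*$ integrates $\varphi$ along the line through $\r$ in direction $\u$ toward the hyperplane $\langle\r,\u\rangle=0$.

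The core of the proof is the first bullet. For $\varphi\in L^{\infty,\alpha}$ with $\alpha>1$, the orthogonal decomposition $\|\r_\perp+\tau\u\|_2^2=\|\r_\perp\|_2^2+\tau^2$ gives $|\varphi(\r_\perp+\tau\u)|\le \|\varphi\|_{\infty,\alpha}/\bigl(1+(\|\r_\perp\|_2^2+\tau^2)^{\alpha/2}\bigr)$, so everything reduces to bounding $\int_{|s|}^{+\infty}\du\tau/\bigl(1+(\|\r_\perp\|_2^2+\tau^2)^{\alpha/2}\bigr)$ by $K_\alpha/(1+\|\r\|_2^{\alpha-1})$. I would obtain this from two complementary estimates: rescaling $\tau=\|\r_\perp\|_2\,u$ (and dropping the $+1$) yields the bound $C_\alpha\|\r_\perp\|_2^{1-\alpha}$ with $C_\alpha=\int_0^\infty(1+u^2)^{-\alpha/2}\,\du u<\infty$, whereas using $(\|\r_\perp\|_2^2+\tau^2)^{\alpha/2}\ge\tau^\alpha$ yields $|s|^{1-\alpha}/(\alpha-1)$. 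Because $\|\r\|_2^2=s^2+\|\r_\perp\|_2^2$, at least one of $|s|,\|\r_\perp\|_2$ exceeds $\|\r\|_2/\sqrt2$, so selecting the appropriate estimate produces decay of order $\|\r\|_2^{1-\alpha}$ for large $\|\r\|_2$; near the origin the integral is controlled by $\int_0^\infty(1+\tau^\alpha)^{-1}\du\tau<\infty$. Combining gives $\|J_{\u,0}^*\varphi\|_{\infty,\alpha-1}\le K_\alpha\|\varphi\|_{\infty,\alpha}$. This integral estimate is the main obstacle: one must convert decay measured in $\|\r_\perp\|_2$ or in $|s|$ into decay in $\|\r\|_2$ (the regime split), and the loss of exactly one order reflects the one-dimensional integration; both require $\alpha>1$ for convergence.

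The second bullet follows with little further work. For continuity from $\mathcal{R}$ into itself, recall $\mathcal{R}=\bigcap_{\alpha>0}L^{\infty,\alpha}$ carries the metrizable topology generated by the countable family $\|\cdot\|_{\infty,n}$, $n\in\mathbb{N}$. Applying the first bullet at weight $\alpha+1$, any $\varphi\in\mathcal{R}$ lies in $L^{\infty,\alpha+1}$ and hence $J_{\u,\omega_0}^*\varphi\in L^{\infty,\alpha}$ with $\|J_{\u,\omega_0}^*\varphi\|_{\infty,\alpha}\le K_{\alpha+1}\|\varphi\|_{\infty,\alpha+1}$ for every $\alpha>0$; this proves simultaneously that $J_{\u,\omega_0}^*\varphi\in\mathcal{R}$ and that $J_{\u,\omega_0}^*$ is continuous in the metric of $\mathcal{R}$.

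Finally, for the left-inverse identity I would combine the right-inverse property $(\Der_{\u}-\ju\omega_0\mathrm{Id})J_{\u,\omega_0}=\mathrm{Id}$ on $\S$, established before the proposition, with the adjoint relation \eqref{eq:adjointrelation}. For $\varphi,\psi\in\S$ one has $\langle\varphi,J_{\u,\omega_0}^*(\Der_{\u}-\ju\omega_0\mathrm{Id})^*\psi\rangle=\langle J_{\u,\omega_0}\varphi,(\Der_{\u}-\ju\omega_0\mathrm{Id})^*\psi\rangle=\langle(\Der_{\u}-\ju\omega_0\mathrm{Id})J_{\u,\omega_0}\varphi,\psi\rangle=\langle\varphi,\psi\rangle$, where the first step is \eqref{eq:adjointrelation} applied to the test function $(\Der_{\u}-\ju\omega_0\mathrm{Id})^*\psi\in\S$, the second is the definition of the adjoint of $(\Der_{\u}-\ju\omega_0\mathrm{Id})$, and the third is the right-inverse property. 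All pairings are legitimate because $J_{\u,\omega_0}\varphi$, being a finite-interval integral of a Schwartz function, is bounded and hence a tempered distribution. Since $\varphi\in\S$ is arbitrary, $J_{\u,\omega_0}^*(\Der_{\u}-\ju\omega_0\mathrm{Id})^*\psi=\psi$, the asserted left-inverse identity.
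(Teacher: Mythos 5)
Your proof is correct, and while it shares the paper's overall skeleton (reduction to $\omega_0=0$ via the modulation identities \eqref{eq:modulation}, which is legitimate since modulations are isometries of every $L^{\infty,\alpha}$ and of $\mathcal{R}$; a weighted pointwise bound on the line integral; intersection over weights for $\mathcal{R}$), it departs from the paper at two genuine points. Writing $s=\langle\r,\u\rangle$ and $\r_\perp=p_{\u^\perp}(\r)$ as you do: for the key estimate, the paper runs a single chain of inequalities valid for all $\r$ at once --- concavity of the square root gives $\sqrt{\lVert\r_\perp\rVert_2^2+\tau^2}\geq 2^{-1/2}\left(\lVert\r_\perp\rVert_2+\tau\right)$, a change of variables reduces everything to the tail bound $\int_x^{+\infty}\frac{\du\nu}{1+\nu^{\alpha}}\leq\frac{2\alpha}{(\alpha-1)(1+x^{\alpha-1})}$ for $x\geq0$, and $|s|+\lVert\r_\perp\rVert_2\geq\lVert\r\rVert_2$ concludes, with explicit constants in one pass. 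Your regime split (the scaling bound $C_\alpha\lVert\r_\perp\rVert_2^{1-\alpha}$ when $\lVert\r_\perp\rVert_2\geq\lVert\r\rVert_2/\sqrt{2}$, the comparison bound $|s|^{1-\alpha}/(\alpha-1)$ when $|s|\geq\lVert\r\rVert_2/\sqrt{2}$, and the uniform bound $\int_0^{+\infty}(1+\tau^\alpha)^{-1}\du\tau$ near the origin) is more elementary per step, at the cost of a case analysis and implicit constants; both arguments invoke $\alpha>1$ at exactly the same two spots. For the left-inverse identity, the paper computes $J_{\u,0}^*\Der_{\u}^*\varphi$ directly from the explicit formula and applies the fundamental theorem of calculus on each half-space $\{\pm\langle\r,\u\rangle\geq0\}$, obtaining the identity pointwise; you instead deduce it abstractly from the right-inverse property and the adjoint relation \eqref{eq:adjointrelation}, correctly noting that the pairings are justified because $J_{\u,\omega_0}\varphi$ is bounded (indeed $|J_{\u,\omega_0}\varphi(\r)|\leq\sup_{\r_\perp}\int_{\R}|\varphi(\r_\perp+\tau\u)|\du\tau<\infty$, the integration interval not being of fixed length). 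Your route buys brevity and reuse of established facts, but strictly speaking duality only yields $J_{\u,\omega_0}^*(\Der_{\u}-\ju\omega_0 \mathrm{Id})^*\psi=\psi$ as tempered distributions, i.e.\ almost everywhere; to recover the pointwise statement of the proposition, observe that the correction term in \eqref{eq:Jadj_def} vanishes identically on the range of $(\Der_{\u}-\ju\omega_0 \mathrm{Id})^*$ (integrate by parts along the line; the boundary terms die since $\psi\in\S$), so the left-hand side is continuous and the a.e.\ identity upgrades to equality everywhere --- a refinement, not a gap.
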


\begin{proof}
Because of the modulation {equalities in \eqref{eq:modulation}}, we only need to prove the {claims} for $\omega_0 = 0$. Let $\textbf{r} \in \{ \langle \r, \u \rangle \geq 0\}$ and $\alpha>1${. T}hen,
\begin{eqnarray*}
\lvert J_{\u,0}^* \varphi (\textbf{r}) \rvert & = & \left\lvert \int_{\langle \r, \u \rangle }^{+\infty} \varphi ( p_{\u^\perp}(\r) + \tau\u) \du\tau \right\rvert \\
	& \leq & \lVert \varphi \rVert_{\infty,\a} \int_{\langle \r, \u \rangle }^{+\infty} \frac{\du\tau}{1+ \lVert p_{\u^\perp}(\r) + \tau \u \rVert_2^{\alpha} } \\
	& = & \lVert \varphi \rVert_{\infty,\a} \int_{\langle \r, \u \rangle }^{+\infty} \frac{\du\tau}{1+ {\sqrt{ \lVert p_{\u^\perp}(\r) \rVert_2^2 + \tau^2  }}^{\alpha} } \\
	& &  \mbox{[using orthogonality of }  p_{\u^\perp}(\r) \mbox{ and } \tau \u] \\
	& \leq & \lVert \varphi \rVert_{\infty,\a} \int_{\langle \r, \u \rangle }^{+\infty} \frac{\du\tau}{1+ (2^{-1/2}(\lVert p_{\u^\perp}(\r)\rVert_2+\tau))^{\alpha} } \\
	& & \mbox{[using concavity of } \sqrt{.} ]\\
	& \leq & \lVert \varphi \rVert_{\infty,\a} \int_{(\langle \r, \u \rangle +\lVert p_{\u^\perp}(\r)\rVert_2)/\sqrt{2} }^{+\infty} \frac{\du\nu}{1+ \nu^{\alpha} } \\
	& \leq &  \frac{2^{3/2}\alpha}{\alpha - 1}  \lVert \varphi \rVert_{\infty,\a} \frac {1}{1+ 2^{-(\alpha-1)/2}(\langle \r, \u \rangle  +\lVert p_{\u^\perp}(\r)\rVert_2)^{\alpha-1}} \\
	& & \mbox{[using } \int_x^{+\infty} \frac{\du\nu}{1+\nu^{\alpha}} \leq \frac {2\alpha}{(\alpha -1)(1+x^{\alpha -1})} \mbox{ for } x\geq0 ]\\
	& \leq & \frac{2^{3/2}\alpha}{\alpha - 1}  \lVert \varphi \rVert_{\infty,\a} \frac {1}{1+ 2^{-(\alpha-1)/2}\lVert \r \rVert_2^{\alpha-1}}  \\
	&  & \mbox{[using } \langle \r, \u \rangle +\lVert p_{\u^\perp}(\r)\rVert_2 \geq \sqrt{\langle \r, \u \rangle^2  +\lVert p_{\u^\perp}(\r)\rVert_2^2} = \lVert \r \rVert_2].
\end{eqnarray*}
{Finally, w}e remark that $(1+\lVert \r \rVert_2^{\alpha-1}) \leq 2^{(\alpha-1)/2} (1+ 2^{-(\alpha-1)/2}\lVert \r \rVert_2^{\alpha-1})${, which yields}
$$ \left\lvert  J_{\u,0}^* \varphi (\textbf{r})(1+\lVert \r \rVert_2^{\alpha-1}) \right\rvert \leq C_{\alpha} \lVert \varphi \rVert_{\infty, \alpha} .$$

The same inequality holds for $\langle \r, \u \rangle < 0$ which ensures the continuity from  $L^{\infty,\alpha}$ to $L^{\infty,\alpha-1}$ for $\alpha>1$. Because $\mathcal{R} = \bigcap_{\a > 0} L^{\infty,\a}$, the previous bounds for all $\a>1$ imply  that {$J^{*}_{\u, 0}$} is continuous  from $\mathcal{R}$ {in}to itself. Moreover, {for $\langle \r, \u \rangle  \geq 0$ we have that}
\begin{eqnarray*} 
J_{\u,0}^* \Der_{\u}^* \varphi (\textbf{r}) & = & - \int_{\langle \r, \u \rangle }^{+\infty}  \Der_{\u} \varphi  \left(\textbf{r}+\left(\tau - \langle \r, \u \rangle \right)\u\right) \du\tau \\
& = & - \left[ \varphi\left(\textbf{r}+\left(\tau - \langle \r, \u \rangle \right)\u\right) \right]_{\tau=\langle \r, \u \rangle }^{\infty} \\
& = & \varphi(\textbf{r}).
\end{eqnarray*}
We get the same result for $\langle \r, \u \rangle  < 0${, which confirms that} $J_{\u,0}^*$ is a left inverse of $\Der_{\u}^*$.

\end{proof}

\subsubsection{Existence of Directional Sparse Processes}
\begin{figure}[tb]
  \centering
    \includegraphics[width=0.68 \textwidth]{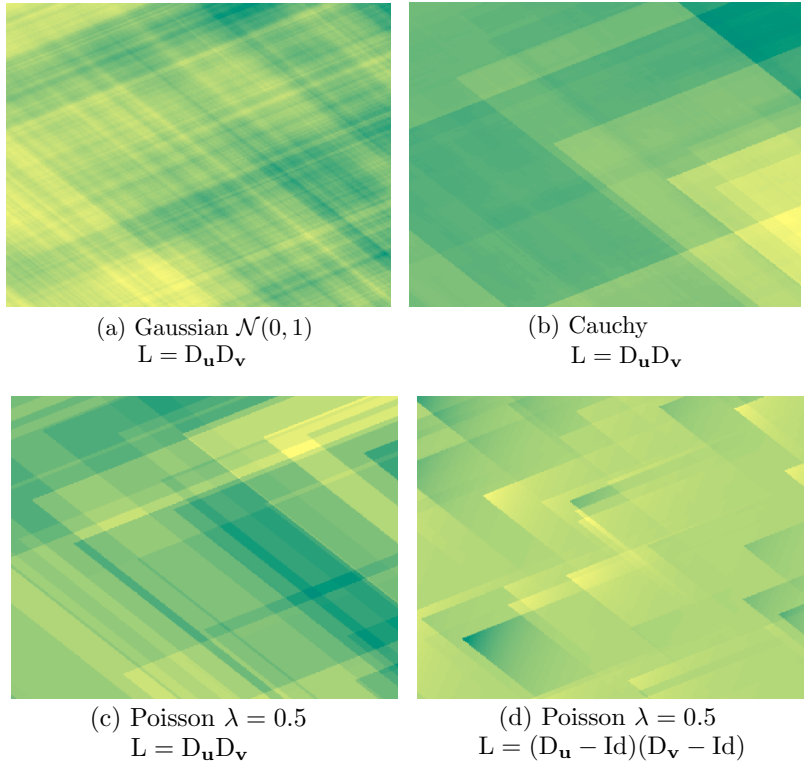}
    \caption{Directional Gaussian or sparse processes}
\end{figure}  
{Up to this point, we have covered the first-order directional differential operators. In general, a directional differential operators $\mathrm{L}$ can be decomposed as}
\begin{eqnarray}
\mathrm{L}& =  & (\Der_{\v_1}-\ju\omega_1 \mathrm{Id})\cdots (\Der_{\v_q}-\ju\omega_q \mathrm{Id})(\Der_{\u_1}-\alpha_1 \mathrm{Id})\cdots  (\Der_{\u_p}-\alpha_p \mathrm{Id}) \nonumber  \\
& = & \mathrm{L}_{\mathrm{critical}} \, \mathrm{L}_{\mathrm{LSI}}, 
\end{eqnarray}
{where $\Re(\alpha_i)\neq 0$. According to Proposition \ref{inversepartialdifferential}, each of the factors in $\mathrm{L}_{\mathrm{LSI}}$ has an $\S$-continuous adjoint left inverse. By composing these operators, we can define a continuous adjoint left inverse $\mathrm{L}_{\mathrm{LSI}}^{*-1}$ for $\mathrm{L}_{\mathrm{LSI}}$. Similarly, the results of Proposition \ref{controlDecrease} can be employed to form a continuous operator $\mathrm{L}_{\mathrm{critical}}^{*-1}$ from $\S$ to $\mathcal{R}$. Since the constituents of $\mathrm{L}_{\mathrm{critical}}^{*-1}$ are not shift-invariant, different composition orders may result in different operators. However, all of them are valid adjoint left inverse operators for $\mathrm{L}_{\mathrm{critical}}$.}

 Finally, $\mathrm{L}^{*-1} = \mathrm{L}_{\mathrm{critical}}^{*-1} \mathrm{L}_{\mathrm{LSI}}^{*-1}$ is a left inverse operator of $\mathrm{L}^* = \mathrm{L}_{\mathrm{LSI}}^* \mathrm{L}_{\mathrm{critical}}^*$, continuous from $\S$ to $\mathcal{R}$, linear, but not shift-invariant in general. Next, we define generalized stochastic processes based on such $\mathrm{L}$.

\begin{prop}  \label{existsMondrian}
Let $f$ be a L\'evy exponent with $V\in\mathscr{M}(0^{+},2)$, let $\mathrm{L}$ {be} a directional differential operator{, and let $\mathrm{L}^{*-1}$ stand for its adjoint left inverse.}
Then, there exists a generalized  stochastic process $s$ on $\S$ such that
\begin{equation}
\widehat{\mathscr{P}}_s(\varphi) = \exp \left(  \int_{\R^d} f(\mathrm{L}^{*-1} \varphi (\textbf{r})) \du\textbf{r} \right) .
\end{equation}
{The resulting process} $s$ is called a \emph{directional sparse process} (a \emph{directional Gaussian process, respectively}) if $V{\not\equiv}0$ (if $V\equiv 0$, respectively).
\end{prop}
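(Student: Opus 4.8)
The plan is to obtain Proposition \ref{existsMondrian} as a direct instance of the compatibility Theorem \ref{maintheo} applied with $\mathrm{T} = \mathrm{L}^{*-1}$; the substance of the argument lies entirely in verifying the four hypotheses of that theorem for a well-chosen pair $(p_{\min}, p_{\max})$. First I would fix these exponents. Since $V \in \mathscr{M}(0^+, 2) = \bigcup_{\epsilon > 0}\mathscr{M}(\epsilon, 2)$, there is some $\epsilon \in (0, 1]$ with $V \in \mathscr{M}(\epsilon, 2)$, and I set $p_{\min} = \epsilon$ and $p_{\max} = 2$. The first bullet of Theorem \ref{maintheo}, namely $V \in \mathscr{M}(p_{\min}, p_{\max})$, then holds by construction. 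The second bullet is automatic because $p_{\min} = \epsilon \leq 1$ whatever the sign of $\mu$ or the symmetry of $V$, and the third is automatic because $p_{\max} = 2$ whatever the value of $\sigma^2$. Hence only the last hypothesis, the continuity of $\mathrm{L}^{*-1}$ from $\S$ into $L^{p_{\min}} \cap L^{p_{\max}} = L^\epsilon \cap L^2$, requires work.

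For this I would invoke the operator analysis carried out just before the statement, where $\mathrm{L}^{*-1} = \mathrm{L}_{\mathrm{critical}}^{*-1}\,\mathrm{L}_{\mathrm{LSI}}^{*-1}$ is shown to be linear and continuous from $\S$ into $\mathcal{R}$: Proposition \ref{inversepartialdifferential} gives the continuity of $\mathrm{L}_{\mathrm{LSI}}^{*-1}$ from $\S$ into $\S$, and Proposition \ref{controlDecrease} gives the continuity of $\mathrm{L}_{\mathrm{critical}}^{*-1}$ from $\mathcal{R}$ (hence from $\S \subset \mathcal{R}$) into $\mathcal{R}$, so the composition maps $\S$ continuously into $\mathcal{R}$. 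It then remains only to note that $\mathcal{R}$ embeds continuously into $L^p$ for every $p > 0$: for $g \in \mathcal{R}$ and any $\alpha > d/p$ one has $|g(\r)| \leq \|g\|_{\infty, \alpha}\,(1 + \|\r\|_2^\alpha)^{-1}$, whence $\|g\|_p^p \leq \|g\|_{\infty, \alpha}^p \int_{\R^d}(1 + \|\r\|_2^\alpha)^{-p}\,\du\r = C_{p, \alpha}\,\|g\|_{\infty, \alpha}^p$ with $C_{p,\alpha} < \infty$. Taking $p = \epsilon$ and $p = 2$ yields a continuous embedding $\mathcal{R} \hookrightarrow L^\epsilon \cap L^2$, and composing with the previous map shows that $\mathrm{L}^{*-1} \colon \S \to L^\epsilon \cap L^2$ is continuous, as required.

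Once the four hypotheses are in place, Theorem \ref{maintheo} produces a unique probability measure $\mathscr{P}_s$ on $\S'$ with $\widehat{\mathscr{P}}_s(\varphi) = \exp\left(\int_{\R^d} f(\mathrm{L}^{*-1}\varphi(\r))\,\du\r\right)$, which is exactly the assertion; the terminology then follows from the dichotomy $V \not\equiv 0$ (sparse) versus $V \equiv 0$ (Gaussian) recalled in Section \ref{subsec:SparseProcess}, and the argument is uniform in the chosen adjoint left inverse so that the non-uniqueness of $\mathrm{L}^{*-1}$ is harmless. I expect the only genuine obstacle to be the continuity of $\mathrm{L}^{*-1}$ into the intersection space: because $\mathrm{L}_{\mathrm{critical}}^{*-1}$ introduces jump discontinuities across the hyperplanes $\langle \r, \u \rangle = 0$ its image leaves $\S$, so the estimate cannot be carried out inside $\S$ and must be routed through the space $\mathcal{R}$ together with the weighted-sup embedding above. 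The remainder is bookkeeping that matches the data of the proposition to the notation of Theorem \ref{maintheo}.
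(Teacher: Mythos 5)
Your proposal is correct and follows essentially the same route as the paper: choose $\epsilon\in(0,1]$ with $V\in\mathscr{M}(\epsilon,2)$, use the continuity of $\mathrm{L}^{*-1}$ from $\S$ to $\mathcal{R}$ established via Propositions \ref{inversepartialdifferential} and \ref{controlDecrease}, and apply Theorem \ref{maintheo} with $p_{\min}=\epsilon$ and $p_{\max}=2$. The only difference is that you make explicit the continuous embedding $\mathcal{R}\hookrightarrow L^\epsilon\cap L^2$ via the weighted-sup estimate, a step the paper's proof asserts without detail (``and, therefore, from $\S$ to $L^\epsilon\cap L^2$''), which is a welcome addition rather than a deviation.
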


\begin{proof}
Let $0<\epsilon\leq 1$ such that $V \in \mathscr{M}(\epsilon,2)$. {As mentioned earlier,} $\mathrm{L}^{*-1}$ is continuous from $\S$ to $\mathcal{R}$ {and, therefore,} from $\S$ to $L^\epsilon \cap L^2$. We can {now} apply Theorem \ref{maintheo} with $p_{\min}=\epsilon$ and $p_{\max} = 2$ {to complete the proof}. 
\end{proof}

{In summary, for all directional differential operators $\mathrm{L}$, we can define the process $s=\mathrm{L}^{-1}w$} if $V$ is a L\'evy-Schwartz measure. For instance, we can define {the} classical one-dimensional L\'evy processes (with the point of view of generalized stochastic process{es}) with $\mathrm{L}=\mathrm{D}$ as in \cite{Unser_etal2011}. We can also define  the {$d$-dimensional} Mondrian process with $\mathrm{L}=\Der_1\cdots \Der_d$ and $V\in\mathscr{M}(0^+,{0})$ which corresponds to a Poisson innovation process (see  Table \ref{table:Distributions}), as was done in \cite{TaftiPoisson} for $d=2$. \\

Let $d=2$. We consider in Figure 3 the case $\mathrm{L}= (\mathrm{D}_{\u} - \alpha \mathrm{Id})(\mathrm{D}_{\textbf{v}} - \beta \mathrm{Id})$ for some real numbers $\alpha,\beta$ and vectors $\u = (2,1)$ and $\textbf{v}=(2,-1)$. {Dark and bright colors indicate large and small values in the simulated realizations, respectively.} 
Note that the {first three} processes are non-stationary {due} to the non shift-invariance of the left inverse of $\mathrm{D}_{\u}\mathrm{D}_{\textbf{v}} $.

\section*{Acknowledgements}
The authors are grateful to R. Dalang for the fruitful discussions we had and V. Uhlmann for her helpful suggestions during the writing. The research leading to these results has received funding from the European Research Council under the European Union's Seventh Framework Programme (FP7/2007-2013) / ERC grant agreement n¡ 267439.

\bibliographystyle{alpha} 
\bibliography{refs}

\begin{thebibliography}{UTAK11}

\bibitem[AJ12]{Jorgensen2011}
D.~Alpay and P.E.T. Jorgensen.
\newblock Stochastic processes induced by singular operators.
\newblock {\em Numerical Functional Analysis and Optimization},
  33(7-9):708--735, 2012.

\bibitem[AKBU13]{Amini2013Bayesian}
A.~Amini, U.S. Kamilov, E.~Bostan, and M.~Unser.
\newblock Bayesian estimation for continuous-time sparse stochastic processes.
\newblock {\em {IEEE} Transactions on Signal Processing}, 61(4):907--920,
  February 15, 2013.

\bibitem[AUM11]{Aminicompressibility}
A.~Amini, M.~Unser, and F.~Marvasti.
\newblock Compressibility of deterministic and random infinite sequences.
\newblock {\em {IEEE} Transactions on Signal Processing}, 59(11):5193--5201,
  November 2011.

\bibitem[BEK10]{Bierme2010}
H.~Bierm{\'e}, A.~Estrade, and I.~Kaj.
\newblock Self-similar random fields and rescaled random balls models.
\newblock {\em Journal of Theoretical Probability}, 23(4):1110--1141, 2010.

\bibitem[BFKU13]{Bostan2013SampTA}
E.~Bostan, J.~Fageot, U.S. Kamilov, and M.~Unser.
\newblock Map estimators for self-similar sparse stochastic models.
\newblock In {\em Proceedings of the Tenth International Workshop on Sampling
  Theory and Applications (SampTA'13), Bremen, Germany}, 2013.

\bibitem[BKNU13]{BosKamNil}
E.~Bostan, U.S. Kamilov, M.~Nilchian, and M.~Unser.
\newblock {S}parse stochastic processes and discretization of linear inverse
  problems.
\newblock {\em IEEE Transactions on Image Processing}, in press, October 2013.

\bibitem[BKU12]{BostanIsbi2012}
E.~Bostan, U.S. Kamilov, and M.~Unser.
\newblock Reconstruction of biomedical images and sparse stochastic modeling.
\newblock In {\em Proceedings of the Ninth {IEEE} International Symposium on
  Biomedical Imaging: From Nano to Macro ({ISBI'12})}, pages 880--883,
  Barcelona, Spain, May 2-5, 2012.

\bibitem[Bon01]{Bony2001}
J.-M. Bony.
\newblock {\em Cours d'analyse---{T}h\'eorie des distributions et analyse de
  Fourier}.
\newblock Les \'editions de l'{E}cole {P}olytechnique, 2001.

\bibitem[BORV98]{bel1998linear}
L.~Bel, G.~Oppenheim, L.~Robbiano, and M.C. Viano.
\newblock Linear distribution processes.
\newblock {\em International Journal of Stochastic Analysis}, 11(1):43--58,
  1998.

\bibitem[CRT06]{Candes2006sparse}
E.J. Cand{\`e}s, J.~Romberg, and T.~Tao.
\newblock Robust uncertainty principles: Exact signal reconstruction from
  highly incomplete frequency information.
\newblock {\em IEEE Transactions on Information Theory}, 52(2):489--509, 2006.

\bibitem[Don06]{Donoho2004}
D.L. Donoho.
\newblock Compressed sensing.
\newblock {\em IEEE Transactions on Information Theory}, 52(4):1289--1306,
  2006.

\bibitem[GCD12]{Gribonval2012compressibility}
R.~Gribonval, V.~Cevher, and M.E. Davies.
\newblock Compressible distributions for high-dimensional statistics.
\newblock {\em Information Theory, IEEE Transactions on}, 58(8):5016--5034,
  2012.

\bibitem[Gel55]{Gelfand1955generalized}
I.~M. Gelfand.
\newblock Generalized random processes.
\newblock {\em Dokl. Akad. Nauk. SSSR}, 100:853--856, 1955.

\bibitem[GV64]{GelVil4}
I.~Gelfand and N.~Y. Vilenkin.
\newblock {\em Generalized functions. Applications of harmonic analysis},
  volume~4.
\newblock Academic press, New York, USA, 1964.

\bibitem[KPAU13]{KamilovMMSE}
U.S. Kamilov, P.~Pad, A.~Amini, and M.~Unser.
\newblock {MMSE} estimation of sparse {L}{\'{e}}vy processes.
\newblock {\em {IEEE} Transactions on Signal Processing}, 61(1):137--147,
  January 1, 2013.

\bibitem[Mal99]{Mallat1999}
S.~Mallat.
\newblock {\em A wavelet tour of signal processing}.
\newblock Academic press, 1999.

\bibitem[MG01]{Mumford2000}
D.~Mumford and B.~Gidas.
\newblock Stochastic models for generic images.
\newblock {\em Quarterly of Applied Mathematics}, 59(1):85--112, 2001.

\bibitem[MN68]{Mandelbrot1968}
B.B. Mandelbrot and J.W.~Van Ness.
\newblock Fractional {B}rownian motions, fractional noises and applications.
\newblock {\em SIAM Review}, 10(4):422--437, 1968.

\bibitem[PU13]{Pad2013wavelets}
P.~Pad and M.~Unser.
\newblock On the optimality of operator-like wavelets for sparse ar(1)
  processes.
\newblock {\em Proceedings of the {T}hirty-{E}ight {IEEE} {I}nternational
  {C}onference on {A}coustics, {S}peech and {S}ignal {P}rocessing
  ({ICASSP'}13)}, 59(8):5063--5074, August 2013.

\bibitem[Rud91]{RudinFA}
W.~Rudin.
\newblock {\em Functional analysis}.
\newblock International series in pure and applied mathematics, McGraw-Hill,
  Inc., New York, 1991.

\bibitem[Sat94]{Sato94}
K.-I. Sato.
\newblock {\em L\'evy processes and infinitely divisible distributions}.
\newblock Chapman \& Hall, 1994.

\bibitem[Sch66]{SchDistri}
L.~Schwartz.
\newblock {\em Th\'eorie des distributions}.
\newblock Hermann, 1966.

\bibitem[SLSZ03]{Simoncelli2003}
A.~Srivastava, A.B. Lee, E.P. Simoncelli, and S.C. Zhu.
\newblock On advances in statistical modeling of natural images.
\newblock {\em Journal of {M}athematical {I}maging and {V}ision}, 18(1):17--33,
  2003.

\bibitem[ST94]{Taqqu1994}
G.~Samorodnitsky and M.S. Taqqu.
\newblock {\em Stable non-{Gaussian} processes: {S}tochastic models with
  infinite variance}.
\newblock Chapman and Hall, 1994.

\bibitem[SU12]{Sun-frac}
Q.~Sun and M.~Unser.
\newblock Left-inverses of fractional {L}aplacian and sparse stochastic
  processes.
\newblock {\em Advances in {C}omputational {M}athematics}, 36(3):399--441,
  2012.

\bibitem[Tre67]{Treves1967}
F.~Treves.
\newblock {\em Topological vector spaces, distributions and kernels}.
\newblock Academic Press, New York, USA, 1967.

\bibitem[TVU09]{TaftiLaplacian}
P.D. Tafti, D.~Van~De Ville, and M.~Unser.
\newblock Invariances, {L}aplacian-like wavelet bases, and the whitening of
  fractal processes.
\newblock {\em {IEEE} Transactions on Image Processing}, 18(4):689--702, April
  2009.

\bibitem[UT11]{TaftiPoisson}
M.~Unser and P.D. Tafti.
\newblock Stochastic models for sparse and piecewise-smooth signals.
\newblock {\em {IEEE} Transactions on Signal Processing}, 59(3):989--1006,
  March 2011.

\bibitem[UTAK11]{Unser_etal2011bis}
M.~Unser, P.D. Tafti, A.~Amini, and H.~Kirshner.
\newblock A unified formulation of {G}aussian vs. sparse stochastic
  processes---{P}art 2: Discrete-domain theory.
\newblock {\em arXiv:1108.6152v2 [cs.IT]}, 2011.

\bibitem[UTS11]{Unser_etal2011}
M.~Unser, P.D. Tafti, and Q.~Sun.
\newblock A unified formulation of {G}aussian vs. sparse stochastic
  processes---{P}art 1: {C}ontinuous-domain theory.
\newblock {\em arXiv:1108.6150}, 2011.

\end{thebibliography}

\end{document}